\newcommand\reallywidehat[1]{%
\savestack{\tmpbox}{\stretchto{%
  \scaleto{%
    \scalerel*[\widthof{\ensuremath{#1}}]{\kern.1pt\mathchar"0362\kern.1pt}%
    {\rule{0ex}{\textheight}}%
  }{\textheight}%
}{2.4ex}}%
\stackon[-6.9pt]{#1}{\tmpbox}%
}
\numberwithin{equation}{section}
\newcounter{Cequ}
\newenvironment{Cequation}
  {\stepcounter{Cequ}%
    \addtocounter{equation}{-1}%
    \renewcommand\theequation{C\arabic{Cequ}}\equation}
  {\endequation}
\newcounter{Cenum}
\renewcommand{\ge}{\geqslant}
\renewcommand{\le}{\leqslant}
\let\op=\llbracket
\let\cl=\rrbracket
\def\pv#1{\ensuremath{\mathsf{#1}}}
\def\Om#1#2{\ensuremath{\overline{\Omega}_{#1}{\pv{#2}}}}
\newcommand\image{\mathop{\mathrm{Im}}}
\let\cal=\mathcal
\def\Cl#1{\ensuremath{\cal{#1}}}
\newlength{\dhatheight}
\newcommand\St{\mathcal{St}}
\newcommand\Null{\mathcal{N\!u\!l\!l}}
\theoremstyle{plain}
\newtheorem{Thm}{Theorem}[section]
\newtheorem{Prop}[Thm]{Proposition}
\newtheorem{Lemma}[Thm]{Lemma}
\newtheorem{Cor}[Thm]{Corollary}
\theoremstyle{definition}
\newtheorem{eg}[Thm]{Example}
\begin{document}

\title{Stone pseudovarieties}%
\thanks{The first author acknowledges partial support by CMUP, member
  of LASI, which is funded through Portuguese funds through FCT under
  the project UIDB/00144/2020. %
  The second author was supported by Grant 19-12790S of the Grant
  Agency of the Czech Republic.}

\author[J. Almeida]{Jorge Almeida}%
\address{CMUP, Dep.\ Matem\'atica, Faculdade de Ci\^encias,
  Universidade do Porto, Rua do Campo Alegre 687, 4169-007 Porto,
  Portugal}
\email{jalmeida@fc.up.pt}

\author[O. Kl\'ima]{Ond\v rej Kl\'ima}%
\address{Dept.\ of Mathematics and Statistics, Masaryk University,
  Kotl\'a\v rsk\'a 2, 611 37 Brno, Czech Republic}%
\email{klima@math.muni.cz}

\keywords{Stone space, topological algebra, profinite algebra,
  relatively free algebra, Stone
  duality}

\makeatletter \@namedef{subjclassname@2020}{%
  \textup{2020} Mathematics Subject Classification} \makeatother
\subjclass[2020]{Primary 46H05; Secondary 06E15, 08A62, 08B20}

\begin{abstract}
  Profinite algebras are the residually finite compact algebras; their
  underlying topological spaces are Stone spaces. We extend the theory
  of profinite algebras to a more general setting of Stone topological
  algebras. We introduce Stone pseudovarieties, that is, classes of
  Stone topological algebras of a fixed topological signature that are
  closed under taking Stone quotients, closed subalgebras and finite
  direct products. Looking at Stone spaces as the dual spaces of
  Boolean algebras, we find a simple characterization of when the dual
  space admits a natural structure of topological algebra. This
  provides a new approach to duality theory which
  culminates in the proof that a Stone quotient of a Stone topological
  algebra that is residually in a given Stone pseudovariety is also
  residually in it, thereby extending the corresponding result of M.
  Gehrke for the Stone pseudovariety of all finite algebras over
  discrete signatures. The residual closure of a Stone pseudovariety
  is thus a Stone pseudovariety, and these are precisely the Stone
  analogues of varieties. A Birkhoff type theorem for Stone varieties
  is also established.
\end{abstract}

\maketitle


\section{Introduction}
\label{sec:intro}

In view of Stone duality, compact 0-dimensional spaces, also known as
Stone spaces or Boolean spaces, deserve special attention.
When the dual Boolean algebras have some additional structure, this is
also carried over to the corresponding Stone spaces
\cite{Gehrke&Grigorieff&Pin:2010, Gehrke&Grigorieff&Pin:2008,
  Gehrke:2016a}.
Although not initially formulated that way, this situation may be
recognized in the seminal framework developed by Eilenberg
\cite{Eilenberg:1976} for the classification of classes of regular
languages by pseudovarieties of finite semigroups and finite monoids,
which provides a translation of combinatorial problems on languages to
algebraic problems that sometimes enables the effective solution of
the former. Eilenberg's framework has been extended in various
directions, including other algebraic structures than semigroups
\cite{Steinby:1979, Almeida:1994a, Urbat&Adamek&Chen&Milius:2017,
  Straubing&Weil:2021}.

The solution of the algebraic problems resulting from the translation
mentioned in the preceding paragraph is often expressed in terms
of verifiable pseudoidentities, which are just formal equalities between
members of a suitable relatively free Stone topological algebra. In
the classical setting, the fact that pseudovarieties are always defined
by pseudoidentities is just Reiterman's theorem
\cite{Reiterman:1982,Banaschewski:1983} and has served as motivation
to understanding the structure of relatively free profinite
semigroups.

In the classical setting of Eilenberg's theory, the Stone topological
algebras that arise are all profinite, which restricts the reach of
the theory to the realm of regular languages. Basically, because Stone
topological algebras of certain kinds, such as semigroups, are
necessarily profinite, one cannot expect to go beyond regular
languages without looking at different kinds of algebraic structures.
Progress in that direction has been previously obtained (see
\cite{Gehrke&Krebs:2017}) but the theory remains confined to word
languages.

Our aim with this paper is to develop a general study of Stone
topological algebras that may eventually lead to applications to more
general classes of languages.
We extend the notion of pseudovariety of finite algebras to Stone
topological algebras, namely by considering classes \Cl S of Stone
topological algebras that are closed under taking Stone quotients,
closed subalgebras, and finite direct products. Such classes have
associated relatively free topological algebras which play a central
role in this paper.

Viewing Stone spaces as dual spaces of Boolean algebras provides an
alternative approach to Stone topological algebras.
The possibility of defining a natural topological algebraic structure
on the dual space of a Boolean algebra turns out to admit a rather
simple characterization. This leads to an alternative approach to
duality, which may be compared with that developed in
\cite{Gehrke&Grigorieff&Pin:2008, Gehrke&Grigorieff&Pin:2010,
  Gehrke:2016a}. The focus in those papers is somewhat different from
ours: first in that bounded distributive lattices, rather than Boolean
algebras, are considered there, so that Priestley duality plays the
role of Stone duality; and second because profinite algebras are
viewed there as duals of lattices with additional unary operations.
Extra care needs to be taken in our approach due to the fact that we
are dealing with topological signatures. In the case of a Stone
signature, which is a significant generalization of a finite
signature, the duality turns out to be particularly simple. As our
main application we show that, for an arbitrary Stone pseudovariety
\Cl S, a Stone quotient of a residually \Cl S Stone topological
algebra is again residually~\Cl S. This further generalizes the
special case obtained in~\cite{Gehrke:2016a} beyond the profinite case
and discrete signatures. The residual closure of a Stone pseudovariety
is, therefore, also a Stone pseudovariety. For such Stone
pseudovarieties, which are the Stone analogs of the classical Birkhoff
varieties, we establish adequate versions of Birkhoff's theorem by
showing that they are defined by Stone pseudoidentities over Stonean
spaces. The Reiterman theorem turns out to be a special case.

Here is a short guide to the paper. Section~\ref{sec:prelims} quickly
introduces necessary background.
Section~\ref{sec:Stone-pseudovarieties} defines Stone pseudovarieties,
provides a construction of their free Stone topological algebras, and
discusses various properties of these structures and how they relate
to term algebras. In Section~\ref{sec:prel-bool-algebra}, we define
three conditions on a Boolean algebra of subsets of an algebra and,
which have both a topological and an algebraic/combinatorial
character; two of these conditions turn out to be equivalent in
general while all three conditions are equivalent for a Stone
signature. The more general of those conditions are shown in
Section~\ref{sec:duality} to characterize such Boolean algebras whose
dual spaces have a natural structure of Stone topological algebra,
thus leading to our approach to duality. Since free Stone topological
algebras are compactifications of term algebras, a natural question is
whether they are the most general ones, namely the \v Cech-Stone
compactifications; we show that this is not the case for a discrete
signature as long as there is at least one operation symbol of arity
greater than one. The paper concludes with
Section~\ref{sec:Stone-varieties}, where some first applications of
duality are presented. Further applications are planned for the
continuation of this work.

\section{Preliminaries}
\label{sec:prelims}

We adopt the notion of topological algebra given
in~\cite{Schneider&Zumbragel:2017}. 
Briefly, we consider a
\emph{signature} to be a graded set $\Omega=\bigcup_{n\ge0}\Omega_n$,
where each $\Omega_n$~is a set, whose elements are called the
\emph{$n$-ary operation symbols}. An \emph{$\Omega$-algebra} is a pair
$(A,E)$, where $A$ is a nonempty set and $E=(E_n)_{n\ge0}$ is a
sequence of \emph{evaluation mappings} $E_n:\Omega_n\times A^n\to A$.
In case $A$ is endowed with a topology and so is each set $\Omega_n$,
we say that the algebra $(A,E)$ is a \emph{topological
  $\Omega$-algebra} if each mapping $E_n$ is continuous. Usually, the
sequence $E$ is not explicitly mentioned and we refer to an algebra
$(A,E)$ simply as the algebra $A$. The notions of homomorphism,
subalgebra, direct product are the standard ones (see
\cite{Burris&Sankappanavar:1981}); in the topological setting, we want
homomorphisms to be continuous, subalgebras to have the induced
topology and product algebras to be endowed with the product topology.

Given a topological property \Cl P, we say that topological algebra
$A$ is a \emph{\Cl P algebra} if the topological space $A$ has
property \Cl P. However, we talk about a \emph{Stone topological
  algebra} instead of Stone algebra when the underlying toopological
space is a Stone space because the latter designation already has a
different meaning in the literature.

If each set $\Omega_n$ is a topological space with a certain property
\Cl P (such as being compact, 0-dimensional, or discrete), then we
say that the signature $\Omega=\bigcup_{n\ge0}\Omega_n$ has property
\Cl P. The exception is finiteness for we say that a signature is
\emph{finite} if so it is as a set. Note that, unlike some literature,
we assume that compact spaces are Hausdorff. Since we always want to
consider only Hausdorff spaces, finite algebras are always viewed with
the discrete topology.

For a class \Cl C of topological algebras, we say that a topological
algebra $A$ is \emph{residually \Cl C} if, for every pair $a,b$ of
distinct elements of~$A$, there is a continuous homomorphism
$\varphi:A\to C$ into a member $C$ of~\Cl C such that
$\varphi(a)\ne\varphi(b)$.

A continuous function from a topological space $X$ to a topological
algebra $A$ is said to be a \emph{generating mapping} if its image
generates (algebraically) a dense subalgebra of~$A$. In case the
mapping is inclusion, we then also say that $A$ is \emph{$X$-generated}.

By a \emph{profinite algebra} we mean a compact algebra which is
residually finite. Equivalently, a profinite algebra is an inverse
limit of finite algebras in the category of topological algebras. For
our purposes, the following alternative characterization of
profiniteness suffices.

\begin{Thm}[{\cite{Almeida&Klima&Goulet-Ouellet:2023}}]
  \label{t:profiniteness}
  A Stone topological algebra $S$ is profinite if and only if, for
  every clopen subset $L\subseteq S$, there exists a continuous
  homomorphism $\varphi:S\to F$ onto a finite algebra $F$ such that
  $L=\varphi^{-1}(\varphi(L))$.
\end{Thm}

In the terminology of formal language theory,
Theorem~\ref{t:profiniteness} states that a Stone topological algebra
is profinite if and only if its clopen subsets are the subsets that
are recognized by continuous homomorphisms into finite algebras.

The following result is proved in~\cite[Theorem~4.3]{Gehrke:2016a} for
finite signatures using duality theory. An alternative approach using
a characterization of profiniteness by syntactic congruences is
presented in \cite[Theorem~4.2]{Almeida&Klima&Goulet-Ouellet:2023},
where an equivalent formulation is adopted that is not so convenient
for our current purposes; it works in our setting of general
topological algebras.

\begin{Thm}
  \label{t:quotient-profinite}
  Suppose that $\varphi:S\to T$ is an onto continuous homomorphism of
  topological algebras such that $T$ is 0-dimensional and Hausdorff.
  If $S$ is profinite then so is $T$.
\end{Thm}

A \emph{pseudovariety} is a class of finite $\Omega$-algebras that is
closed under taking homomorphic images, subalgebras and finite direct
products. For a pseudovariety \pv V, a profinite algebra is said to be
\emph{pro-\pv V} if it is residually \pv V.

The absolutely free $\Omega$-algebra over a set $X$ is denoted
$T_\Omega(X)$. Its elements are the \emph{$\Omega$-terms} on~$X$, that
is, the formal expressions that may be constructed from the elements
of $X$ applying formally operation symbols according to their arity:
the elements of $X$ are terms and, if $w\in\Omega_n$ and
$t_1,\ldots,t_n$ are terms, then so is $w(t_1,\ldots,t_n)$. Terms may
be represented by labeled trees recursively as follows: the labeled
tree of $x\in X$ is a tree reduced to the root, of \emph{non-operation
  type} which is labeled $x$; for $w\in\Omega_n$ and terms
$t_1,\ldots,t_n$, the labeled tree of $w(t_1,\ldots,t_n)$ has root of
\emph{operation type} labeled $w$ with (ordered) children which are
the roots of the labeled trees of $t_1,\ldots,t_n$, respectively. By
the \emph{typed shape} of a term we mean the corresponding labeled
tree where labels are omitted but the type of node is retained.

In case $X$ is a topological space and $\Omega$ is a topological
signature, we may view the term algebra as the topological sum of the
spaces of terms of each typed shape; by the latter we mean the set of
all terms of a given typed shape, which is viewed as the direct product of
the spaces of possible labels of each individual node.

\begin{Prop}
  \label{p:term-algebra-free-as-top-algebra}
  The term algebra $T_\Omega(X)$ is the absolutely free topological
  $\Omega$-algebra on~$X$ in the sense that the inclusion mapping
  $\iota:X\to T_\Omega(X)$ is such that, for every continuous mapping
  $\varphi:X\to A$ into a topological $\Omega$-algebra $A$, there is a
  unique continuous homomorphism $\hat{\varphi}:T_\Omega(X)\to A$ such
  that $\hat{\varphi}\circ\iota=\varphi$.
\end{Prop}

\begin{proof}
  We first show that $T_\Omega(X)$ is a topological $\Omega$-algebra
  by showing that each evaluation mapping
  $E_n:\Omega_n\times\bigl(T_\Omega(X)\bigr)^n\to T_\Omega(X)$ is
  continuous. Indeed, if $(o_i,t_{1,i},\ldots,t_{n,i})_i$ is a net
  converging in $\Omega_n\times\bigl(T_\Omega(X)\bigr)^n$ to
  $(o,t_1,\ldots,t_n)$, then we may assume that, for each
  $k\in\{1,\ldots,n\}$, all $t_{k,i}$ have the same typed shape
  as~$t_k$, so that the label in each node of $t_{k,i}$ converges to
  the label of the corresponding node of ~$t_k$. Then
  the same
  property holds for the terms of the net
  $\bigl(o_i(t_{1,i},\ldots,t_{n,i})\bigr)_i$ with respect to the term
  $o(t_1,\ldots,t_n)$, which shows that this term is the limit of the
  net, thereby establishing that $E_n$ is continuous.

  Next, suppose that $\varphi:X\to A$ is a continuous mapping into a
  topological algebra $A$. By the universal property of $T_\Omega(X)$
  as a free algebra, we know that there is a unique homomorphism
  $\hat{\varphi}:T_\Omega(X)\to A$ such that
  $\hat{\varphi}\circ\iota=\varphi$. Thus, it suffices to show that
  $\hat{\varphi}$ is continuous. For this purpose, consider a net
  $(t_i)_i$ converging in~$T_\Omega(X)$ to a term $t$. We may again
  assume that all $t_i$ have the same typed shape as $t$. We need to
  show that
  \begin{equation}
    \label{eq:term-algebra-free-as-top-algebra-1}
    \text{$\hat{\varphi}(t_i)$ converges to $\hat{\varphi}(t)$.}
  \end{equation}
  We proceed by induction on the height of $t$, assuming that the
  desired convergence \eqref{eq:term-algebra-free-as-top-algebra-1}
  holds for smaller height than that of~$t$. As $t_i$ converges to~$t$
  in $T_\Omega(X)$, we may assume that either $t_i$ converges to $t$
  in $X$, in which case the continuity of $\varphi$ yields
  \eqref{eq:term-algebra-free-as-top-algebra-1}, or
  $t_i=o_i(s_{1,i},\ldots,s_{n,i})$ and $t=o(s_1,\ldots,s_n)$, where
  $n\ge0$, $o_i$ converges to~$o$ in $\Omega_n$, and each $s_{k,i}$
  converges to $s_k$ in $T_\Omega(X)$. Since each $s_k$ has height
  smaller than that of~$t$, the induction hypothesis gives that
  $\hat{\varphi}(s_{k,i})$ converges to $\hat{\varphi}(s_k)$ in~$A$.
  Since $\hat{\varphi}$ is a homomorphism and the evaluation mapping
  $E_n^A:\Omega_n\times A^n\to A$ is continuous, it follows that
  \eqref{eq:term-algebra-free-as-top-algebra-1} holds, thereby
  achieving the induction step.
\end{proof}

Let $X$ be a topological space. We say that the space $X$ is $T_1$ if,
given any distinct points $x,y\in X$, there is an open set $U$ such
that $x\in U$ and $y\notin U$. The space $X$ is \emph{completely
  regular} if, whenever $x\in X$ and $C\subseteq X$ is a closed subset
not containing $x$, there is a continuous function
$\varphi:X\to\mathbb{R}$ into the reals that maps $x$ to~$0$ and $C$
to~$1$. A $T_1$ completely regular space is said to be a
\emph{Tychonoff space}. Compact spaces are Tychonoff \cite[Theorems
1.5.11 and 3.1.9]{Engelking:1989} and so are subspaces of Tychonoff
spaces.

A \emph{compactification} of the space $X$ is a compact space $C$
endowed with a homeomorphic embedding $\varepsilon_C:X\to C$ whose
image is a dense subspace of~$C$. Compactifications of $X$ may be
naturally ordered by letting $C_1\le C_2$ if there is a continuous
mapping $f:C_2\to C_1$ such that
$f\circ\varepsilon_{C_2}=\varepsilon_{C_1}$. The \emph{\v Cech-Stone
  compactification} (also known as \emph{Stone-\v Cech}
compactification; since both papers \cite{Cech:1937,Stone:1937} were
published in the same year, we prefer the alphabetical order) of~$X$
is a maximum compactification of~$X$ in the quasi-ordering $\le$,
which may or may not exist. If it exists for the space $X$, it is
clearly unique and it is denoted $\beta X$. The \v Cech-Stone
compactification exists exactly for Tychonoff spaces
\cite[Corollary~3.5.10]{Engelking:1989}. Note that the existing
literature also considers compactifications of more general spaces,
but then the definition assumes that $\varepsilon_C$ is only a
continuous mapping.

Recall that, for a Tychonoff space $T$ of cardinality $\kappa$, any
compactification of~$T$ has cardinality at most $2^{2^\kappa}$
\cite[Theorem~3.5.3]{Engelking:1989}. Hence, for a topological space
$X$, the cardinality of an $X$-generated Stone topological
$\Omega$-algebra $S$ is bounded by $2^{2^\kappa}$, where
$\kappa=\max\{|\Omega|,\aleph_0,|X|\}$ is a bound of the cardinality
of the subalgebra of $S$ (algebraically) generated by $X$. By
identifying homeomorphic-isomorphic Stone topological algebras, we
conclude that the $X$-generated Stone topological algebras 
may be
viewed as constituting a set.

\section{Stone pseudovarieties and relatively free Stone topological algebras}
\label{sec:Stone-pseudovarieties}

By a \emph{Stone pseudovariety} we mean a nonempty class of Stone
topological algebras of a fixed topological signature that is closed
under taking continuous homomorphic images that are again Stone
spaces, closed subalgebras, and finite direct products. Clearly, every
one-point algebra, also called a trivial (topological) algebra, is a
Stone topological algebra which is a homomorphic image of every Stone
topological algebra. Thus, the trivial algebras constitute the
smallest Stone pseudovariety. We also mention that continuous
bijections between compact spaces are homeomorphisms. Hence, a
continuous mapping between Stone topological algebras that is an
algebraic isomorphism is also a homeomorphism.

Note that pseudovarieties of finite algebras are Stone
pseudovarieties. For a pseudovariety \pv V of finite algebras, the
class of all pro-\pv V algebras is also a Stone pseudovariety because
of Theorem~\ref{t:quotient-profinite}.

The class of all Stone topological $\Omega$-algebras is a Stone
pseudovariety, denoted $\St_\Omega$. The pseudovariety of all finite
Stone topological $\Omega$-algebras is denoted $\pv{Fin}_\Omega$.
Since the intersection of a nonempty family of Stone pseudovarieties
is again a Stone pseudovariety, every class of Stone topological
algebras generates a Stone pseudovariety, namely the smallest Stone
pseudovariety that contains it.

The following are some further easy to describe examples of Stone
pseudovarieties.

\begin{eg}
  \label{eg:null}
  On a Stone space, we may always define a structure of Stone
  topological algebra by choosing a point and declaring it to be the
  only value of all operations. Such a structure and the resulting
  Stone topological algebra are said to be \emph{null}. The class
  $\Null_\Omega$ of all null Stone topological $\Omega$-algebras is a
  Stone pseudovariety.
\end{eg}

\begin{eg}
  \label{eg:cardinal}
  Let $\kappa$ be an infinite cardinal. The class $\St_\Omega^\kappa$
  of all Stone topological $\Omega$-algebras of cardinal less
  than~$\kappa$ is a Stone pseudovariety. Note that
  $\St_\Omega^{\aleph_0}=\pv{Fin}_\Omega$. Distinct infinite cardinals
  $\kappa$ determine distinct Stone pseudovarieties
  $\St_\Omega^\kappa$ and in fact even the Stone pseudovarieties
  $\St_\Omega^\kappa\cap\Null_\Omega$ are distinct: first, for every
  infinite discrete space, its Alexandroff (one point)
  compactification is a Stone space, so that there are Stone spaces of
  every cardinality; second, such a Stone space admits a null
  structure.
\end{eg}

\begin{eg}
  \label{eg:nilpotent}
  A Stone topological algebra $S$ is said to be \emph{nilpotent} if
  there exists an integer $N$ such that, for every finite space $X$
  and every continuous homomorphism $\varphi:T_\Omega(X)\to S$, all
  terms with typed shape of height at least $N$ have the same image.
  The class $\Cl S_{\mathrm{nil}}$ of all nilpotent Stone topological
  algebras is a Stone pseudovariety.
\end{eg}

\subsection{Relatively free Stone topological algebras}
\label{sec:free-Stone}

Given a topological space $X$ and a Stone pseudovariety \Cl S, an
\emph{\Cl S-free Stone topological algebra over~$X$} is given by a
continuous generating mapping $\iota:X\to S$ into a Stone topological algebra $S$
which is residually \Cl S and 
such that, for every continuous mapping $\varphi:X\to T$ into a member
$T$ of~\Cl S, there is a unique continuous homomorphism
$\hat{\varphi}:S\to T$ such that $\hat{\varphi}\circ\iota=\varphi$.
Usually, the mapping $\iota$ is understood from the context and we
simply refer to $S$ as an \Cl S-free Stone topological algebra.

The proof of the next result gives a construction for free Stone
topological algebras relative to Stone pseudovarieties. The
construction is nothing but the usual realization of an inverse limit
in the topological or algebraic context. Since our considerations on
Stone topological algebras go beyond the standard setting, the details
of the proof are presented for the sake of completeness.

\begin{Prop}
  \label{p:free-Stone}
  Let \Cl S be a Stone pseudovariety and let $X$ be a topological
  space. Then there exists an \Cl S-free Stone topological algebra
  over~$X$ which is uniquely determined up to continuous isomorphism
  respecting generators.
\end{Prop}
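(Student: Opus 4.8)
The plan is to build the desired algebra as a closed subalgebra of a sufficiently large product of members of~$\Cl S$, in direct analogy with the classical construction of relatively free algebras. First I would fix a set of representatives of the $X$-generated members of~$\Cl S$ up to continuous isomorphism — that such a set exists is precisely the cardinality observation recalled above — and let $I$ be the set of all pairs $(T,\varphi)$ with $T$ one of these representatives and $\varphi\colon X\to T$ a continuous generating mapping (for a fixed~$T$ these form a subset of~$T^X$, so $I$ really is a set). I would put $P=\prod_{(T,\varphi)\in I}T$, which is a Stone topological $\Omega$-algebra under the product topology and pointwise operations, let $\Phi\colon X\to P$ be the continuous map whose $(T,\varphi)$-component is~$\varphi$, and define $S$ to be the closure in~$P$ of the subalgebra $\langle\Phi(X)\rangle$ generated by~$\Phi(X)$. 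Since the closure of a subalgebra of a topological algebra is again a subalgebra, $S$ is a closed subalgebra of~$P$, hence a Stone topological algebra, and $\iota:=\Phi\colon X\to S$ is a continuous generating mapping by construction.

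Next I would verify the two defining properties. That $S$ is residually~$\Cl S$ is immediate: two distinct points of $S\subseteq P$ differ in some coordinate, and the corresponding projection restricts to a continuous homomorphism from~$S$ into the relevant member of~$\Cl S$. For the universal property, given a continuous $\varphi_0\colon X\to T_0$ with $T_0\in\Cl S$, I would replace $T_0$ by the closed subalgebra $T_1=\overline{\langle\varphi_0(X)\rangle}$, which lies in~$\Cl S$ because $\Cl S$ is closed under closed subalgebras; now $\varphi_0\colon X\to T_1$ is a continuous generating mapping, so composing with a continuous isomorphism $g$ onto a representative~$T_j$ we have $(T_j,g\circ\varphi_0)\in I$ and hence $\pi_{(T_j,g\circ\varphi_0)}|_S\circ\iota=g\circ\varphi_0$. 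Then $\hat\varphi_0:=(T_1\hookrightarrow T_0)\circ g^{-1}\circ\pi_{(T_j,g\circ\varphi_0)}|_S$ (using that $g^{-1}$ is continuous, the spaces being compact Hausdorff) satisfies $\hat\varphi_0\circ\iota=\varphi_0$. Its uniqueness is the familiar density argument: two continuous homomorphisms $S\to T_0$ that agree on $\iota(X)$ agree on the subalgebra it generates, hence — as $T_0$ is Hausdorff — on its closure~$S$. This shows $(S,\iota)$ is $\Cl S$-free over~$X$.

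For uniqueness up to continuous isomorphism, let $(S',\iota')$ be any $\Cl S$-free Stone topological algebra over~$X$. For each $(T,\varphi)\in I$ the universal property of~$S'$ yields a unique continuous homomorphism $\rho_{(T,\varphi)}\colon S'\to T$ with $\rho_{(T,\varphi)}\circ\iota'=\varphi$, and these assemble into a continuous homomorphism $R\colon S'\to P$ with $R\circ\iota'=\Phi$. Then $R$ carries the subalgebra generated by $\iota'(X)$ onto the subalgebra generated by $\Phi(X)$, and as $R(S')$ is compact, hence closed in~$P$, it follows that $R(S')=S$. Moreover $R$ is injective: if $a\neq b$ in~$S'$, residual~$\Cl S$-ness of~$S'$ gives a continuous homomorphism into a member of~$\Cl S$ separating~$a$ and~$b$; replacing its target by the closure of the subalgebra generated by the image of $\iota'(X)$ (a closed subalgebra, still in~$\Cl S$, still separating~$a$ and~$b$) and composing with a continuous isomorphism onto a representative~$T_j$ gives a continuous homomorphism $S'\to T_j$ whose composite with~$\iota'$ is a generating mapping, so it is $\rho_{(T_j,\varphi_j)}$ for the matching $(T_j,\varphi_j)\in I$ by the uniqueness clause of the universal property of~$S'$; hence the $(T_j,\varphi_j)$-component of~$R$ already separates~$a$ and~$b$. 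Since a continuous bijective homomorphism between compact Hausdorff spaces is a continuous isomorphism and $R\circ\iota'=\iota$, we obtain $(S',\iota')\cong(S,\iota)$.

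I do not expect a deep obstacle: this is the standard ``product of all quotients'' construction transported to the setting of Stone topological algebras. The two points requiring care are the set-theoretic one — that the index family $I$ is genuinely a set, which is exactly what the cardinality bound recalled before the statement provides — and the bookkeeping needed to remain inside~$\Cl S$ whenever a target is shrunk: the recurring device is to replace a continuous homomorphic image by the closure of the subalgebra generated by the image of~$X$, which is a closed subalgebra of a member of~$\Cl S$ and hence again belongs to~$\Cl S$. It is also worth stressing that $S$ will in general only be residually~$\Cl S$, not a member of~$\Cl S$ itself, since the defining product is infinite whereas Stone pseudovarieties are closed only under finite products.
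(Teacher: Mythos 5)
Your proposal is correct and follows the same overall strategy as the paper --- embedding the free object into a product indexed by the (set of) continuous generating mappings from $X$ into $X$-generated members of~\Cl S --- but the realization differs in a way worth noting. The paper orders its index set by factorization, observes that it is upper directed, and defines the free algebra $F$ as the set of compatible tuples (an inverse limit); it must then separately prove that $F$ is closed in the product and, using directedness, that $\iota(X)$ generates $F$ densely. You instead define $S$ directly as the closure in the product of the subalgebra generated by the diagonal image of~$X$, which makes closedness and the generating property automatic and reduces the fact that $S$ is residually~\Cl S to projecting onto coordinates; the only price is the small bookkeeping step, which you carry out correctly, of shrinking an arbitrary target $T_0\in\Cl S$ to the closed subalgebra $\overline{\langle\varphi_0(X)\rangle}$ so that the given continuous map becomes a generating mapping indexed by your set~$I$. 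The two constructions yield the same algebra (the paper's $F$ is shown a posteriori to coincide with the closure of the subalgebra generated by $\iota(X)$), but your route avoids the directed-system and density arguments entirely. Your uniqueness argument is essentially the one in the paper.
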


\begin{proof}
  By the cardinality considerations at the end of
  Section~\ref{sec:prelims}, there is a nonempty set \Cl R of
  generating mappings from $X$ to members of~\Cl S such that, for
  every generating mapping $\varphi:X\to T$ into a member of~\Cl S,
  there is a unique member $\psi:X\to R$ of~\Cl R and a continuous
  isomorphism $\alpha:T\to R$ such that $\alpha\circ\varphi=\psi$. For
  a generating mapping $\psi:X\to R$ in \Cl R we also write $R_\psi$
  for $R$. We order \Cl R as follows: for $\varphi:X\to R_\varphi$ and
  $\psi:X\to R_\psi$, we write $\varphi\le\psi$ if there exists a
  continuous homomorphism $\alpha_{\varphi,\psi}:R_\psi\to R_\varphi$
  such that $\alpha_{\varphi,\psi}\circ\psi=\varphi$. We observe that
  the mapping $\alpha_{\varphi,\psi}$ is uniquely determined by the
  pair $\varphi,\psi$ of elements of~\Cl R such that $\varphi\le \psi$
  because $R_\psi$ is $X$-generated. Also note that $\le$ is a partial
  order on~\Cl R which is upper directed: given $\varphi_1$ and
  $\varphi_2$ in~\Cl R, there is $\varphi\in\Cl R$ such that
  $\varphi_1\le\varphi$ and $\varphi_2\le\varphi$.

  The remainder of the proof consists in showing that the usual
  construction of the inverse limit of the inverse system \Cl R is a
  Stone topological algebra that is \Cl S-free over~$X$.
  
  The product $P=\prod_{\varphi\in\Cl R}R_\varphi$ of Stone
  topological algebras is itself a Stone topological algebra. Consider
  the subset $F$ consisting of all $(r_\varphi)_{\varphi\in\Cl R}$
  such that, whenever $\varphi,\psi\in\Cl R$ satisfy $\varphi\le\psi$,
  the equality $\alpha_{\varphi,\psi}(r_\psi)=r_\varphi$ holds. We
  claim that the (continuous) mapping $\iota:X\to F$ given by
  $\iota(x)=(\varphi(x))_{\varphi\in\Cl R}$ determines an \Cl S-free
  Stone topological algebra over $X$.
  
  It is routine to verify
   that $F$ is a subalgebra of~$P$. We need to
  show that it is a closed subset of $P$. For a pair $\varphi,\psi\in
  \Cl R$ satisfying $\varphi\le \psi$, we consider the subset
  $F_{\varphi,\psi}=\{(r_\chi)_{\chi\in\Cl R} \in P \mid
  \alpha_{\varphi,\psi}(r_\psi)=r_\varphi\}$ of $P$. Since the graph
  of the continuous mapping $\alpha_{\varphi,\psi}$ is a closed subset
  of $R_\psi\times R_\varphi$, the subset $F_{\varphi,\psi}$ is closed
  in $P$. Now $F$ is closed because it is the intersection of all
  $F_{\varphi,\psi}$ with $\varphi,\psi\in\Cl R$ such that
  $\varphi\le\psi$. As $P$ is a Stone topological algebra that is
  residually \Cl S, the same is true for its closed subalgebra $F$.
  
  The mapping $\iota:X\to F$ is continuous. To show that it is a
  generating mapping, we recall that elements of the subalgebra
  algebraically generated by $\iota (X)$ are given by terms in
  variables from the set $X$. More formally and generally, since
  $T_\Omega(X)$ is the free topological algebra over $X$, each
  continuous mapping $\varphi: X \rightarrow R$ can be extended to a
  unique continuous homomorphism $\hat{\varphi} : T_\Omega(X)
  \rightarrow R$, where the image of $\hat{\varphi}$ is exactly the
  subalgebra of $R$ algebraically generated by $\iota(X)$. We consider
  an arbitrary element $r\in F$ and its neighborhood $N$ and we want
  to show that $N$ contains some point given by a term. We may assume
  that $N=\prod_{\psi\in\Cl R} N_\psi$ is an open set from the basis
  of the product topology of $P$. Thus, there is a finite subset $Z$
  of \Cl R such that $N_\varphi=R_\varphi$ for $\varphi\not\in Z$ and
  $N_\varphi$ is a proper open subset of $R_\varphi$ for $\varphi\in
  Z$. Since the order of \Cl R is upper directed, there is $\psi\in
  \Cl R$ such that $\varphi \le \psi$ for every $\varphi \in Z$. We
  observe that the open subset $V$ of $R_\psi$ given by
  $V=\bigcap_{\varphi\in Z} \alpha^{-1}_{\varphi, \psi}(N_\varphi)$
  contains the element $r_\psi$. Since $R_\psi$ is $X$-generated,
  there is a term $t$ such that $\hat{\psi}(t)\in V$. It is easy to
  see that the condition $\hat{\varphi}(t)\in N_\varphi$ holds for every
  $\varphi \in Z$ and so, it holds for every $\varphi \in \Cl R$.
   
  To show that $\iota:X\to F$ is an \Cl S-free Stone topological
  algebra over $X$, it remains to prove the universal property. Let
  $\varphi:X\to S$ be an arbitrary continuous mapping into a
  member $S$ of~\Cl S. 
  By the choice of~\Cl R, there exist $\psi:X\to
  R_\psi$ in~\Cl R and a continuous injective homomorphism $\alpha:R_\psi\to S$.
  Consider the restriction $\pi_\psi:F\to R_\psi$ to~$F$ of the
  projection of~$P$ to the $\psi$-component. Then
  $\alpha\circ\pi_\psi:F\to S$ is a continuous homomorphism such that
  $\alpha\circ\pi_\psi\circ\iota=\varphi$. Uniqueness of a 
  mapping $F\to S$ with such properties follows from the fact that $\iota$ is
  a generating mapping. This establishes the existence of an \Cl
  S-free topological algebra over~$X$.
  
  Let $\chi : X \rightarrow G$ be another \Cl S-free Stone topological
  algebra over X. Let $\varphi$ be an arbitrary element of \Cl R. From
  the universal property of $\chi$, we may deduce the existence of a
  continuous homomorphism $\beta_\varphi : G \rightarrow R_\varphi$
  such that $\beta_\varphi \circ \chi =\varphi$. From the universal
  property of the product (both as an algebra and a topological
  space), we get a continuous homomorphism $\beta : G \rightarrow P$.
  The image of $\beta$ is equal to $F$ because $\chi$ is a generating
  mapping. Moreover, since $G$ is residually \Cl S, $\beta$ is also
  injective. Altogether, $\beta : G \rightarrow F$ is a continuous
  isomorphism and there is just one \Cl S-free Stone topological
  algebra over $X$ up to continuous isomorphism.
\end{proof}

For a Stone pseudovariety \Cl S, we denote $\Om X{}\Cl S$ the \Cl
S-free Stone topological algebra over~$X$. The mapping $\iota:X\to\Om
X{\Cl S}$ is called the \emph{natural generating mapping}.

\begin{Prop}
  \label{p:free-Stone-embedding-of-X}
  Let \Cl S be a nontrivial Stone pseudovariety and let $X$ be a
  topological space. Then the continuous mapping $\iota:X\to\Om X{\Cl
    S}$ is a homeomorphic embedding if and only if $X$ is
  0-dimensional and Hausdorff.
\end{Prop}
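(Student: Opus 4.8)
The plan is to prove both implications separately, using the two-element algebra $2_\Omega$ to reduce the nontrivial direction to a basic duality/separation argument, and using an elementary topological fact for the converse.

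First, for the ``only if'' direction, suppose that $\iota:X\to\Om X{\Cl S}$ is a homeomorphic embedding. Since $\Om X{\Cl S}$ is a Stone space, in particular it is 0-dimensional, and 0-dimensionality is inherited by subspaces. As $\iota(X)$ carries the subspace topology and is homeomorphic to $X$, we conclude that $X$ is 0-dimensional. This direction is essentially immediate.

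For the ``if'' direction, assume that $X$ is 0-dimensional. The mapping $\iota$ is continuous by construction, so we must show it is injective and that it is open onto its image (equivalently, that it is a homeomorphic embedding). For injectivity and openness simultaneously, it suffices to show that for every clopen subset $U$ of $X$ there is a clopen subset $W$ of $\Om X{\Cl S}$ with $\iota^{-1}(W)=U$; this is because the clopen sets of $X$ form a basis and separate points (as $X$ is 0-dimensional, hence $T_1$). Given such a clopen $U\subseteq X$, consider the characteristic map $\chi_U:X\to 2_\Omega$ with $\chi_U^{-1}(1)=U$, which is continuous since $U$ is clopen. Because $2_\Omega\in\Cl S$, the universal property of $\Om X{\Cl S}$ yields a unique continuous homomorphism $\widehat{\chi_U}:\Om X{\Cl S}\to 2_\Omega$ with $\widehat{\chi_U}\circ\iota=\chi_U$. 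Then $W=\widehat{\chi_U}^{\,-1}(1)$ is clopen in $\Om X{\Cl S}$ (as $\{1\}$ is clopen in the discrete space $2_\Omega$) and $\iota^{-1}(W)=U$ by construction.

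Having this, injectivity of $\iota$ follows because distinct points of $X$ lie on opposite sides of some clopen set (using that $X$ is 0-dimensional, hence $T_1$ and with a clopen basis), and that clopen set is separated by a corresponding clopen $W$ upstairs; and the map $\iota$ is open onto $\iota(X)$ because every basic clopen $U$ of $X$ equals $\iota^{-1}(W)=\iota(X)\cap W$ pulled back, so $\iota(U)=\iota(X)\cap W$ is open in the subspace $\iota(X)$. Thus $\iota$ is a continuous open injection onto its image, hence a homeomorphic embedding. The main point to be careful about is ensuring that the $2_\Omega$-valued maps genuinely separate all the clopen sets of $X$ and that the subspace topology on $\iota(X)$ is recovered exactly from the traces of the clopen sets $W$; both follow cleanly from the universal property together with 0-dimensionality of $X$, so there is no serious obstacle once the use of $2_\Omega$ is set up. (Note that without the hypothesis $2_\Omega\in\Cl S$ the argument could fail, which is why it is imposed.)
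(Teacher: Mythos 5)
Your proof is correct and follows essentially the same route as the paper: both directions use the same ideas, namely that subspaces of Stone spaces are 0-dimensional for the forward implication, and the characteristic function $\chi_U:X\to 2_\Omega$ of a clopen set together with the universal property of $\Om X{\Cl S}$ for the converse. If anything, your explicit verification that $\iota(U)=\iota(X)\cap W$ with $W=\widehat{\chi_U}^{\,-1}(1)$ is slightly more carefully spelled out than the paper's corresponding step.
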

\begin{proof}
  Assume that $\iota$ is a homeomorphic embedding.
  Since every Stone space is 0-dimensional and Hausdorff, and these
  properties are inherited by subspaces, we get that $X$ is
  0-dimensional and Hausdorff.
  
  Now, assume that $X$ is 0-dimensional and Hausdorff. Taking a pair
  of distinct elements $x$ and $y$ in $X$, we know that
  there is a clopen subset
  $U$ of~$X$ such that $x\in U$ and $y\not\in U$. We consider a
  function $\psi:X\to A$ into a nontrivial member $A$ of~\Cl S which
  maps $U$ to a point and $X\setminus U$ to a different point. Note
  that $\psi$ is a continuous mapping. Since $\psi$ factorizes through
  $\iota$, there is a continuous (homomorphism) $\alpha : \Om X{\Cl S}
  \to A$ such that $\alpha \circ \iota = \psi$. Since $A$ is a Stone
  space, there is is a clopen subset $V$ containing $\psi(U)$ but not
  the singleton set $\psi(X\setminus U)$. Then $\iota
  (U)\subseteq\alpha^{-1}(V)$ and $\iota (X\setminus
  U)\subseteq\iota(X)\setminus\alpha^{-1}(V)$, so that $\iota
  (U)=\alpha^{-1}(V)\cap\iota(X)$ is a clopen subset of $\iota(X)$,
  thereby showing that $\iota:X\to\iota(X)$ is an open mapping.
  In particular, we see that $\iota(x)\not=\iota(y)$, which shows that
  $\iota$ is injective. Hence, $\iota:X\to\iota(X)$ is a
  homeomorphism.
\end{proof}

Whenever the assumptions of Proposition~\ref{p:free-Stone-embedding-of-X} hold, we 
think of $X$ as a subset of~$\Om X{\Cl S}$ and of
the natural generating mapping as being the inclusion mapping.

Another natural question is whether $\Om X{\Cl S}$ contains as a
subspace a homeo\-morphic-isomorphic copy of $T_\Omega(X)$, which is
true in the case of pseudovarieties of finite algebras whenever
certain nilpotent algebras are present in \Cl S. We consider here
their topological analogs which reflect the topologies on $X$ and
$\Omega$.

\begin{Lemma}
  \label{l:nilpotent-algebra}
  Let $K$ be a clopen subset of $T_\Omega(X)$ all of whose elements
  have the same typed shape such that $K$ is the product of clopen
  sets at each node in the typed shape. Then there is a continuous
  homomorphism $\varphi:T_\Omega(X)\to F$ into a nilpotent finite
  algebra $F$ such that $K=\varphi^{-1}(\varphi(K))$.
\end{Lemma}

\begin{proof}
  Let $\sigma$ be the typed shape of the elements of~$K$. At each node
  $\tau$ of $\sigma$, $K$ is determined by a clopen subset $K_\tau$ of
  either $X$ or else of $\Omega_{n_\tau}$ for a node with $n_\tau\ge0$
  children; to uniformize the notation, we let $n_\tau=-1$ and
  $\Omega_{-1}=X$ if the space corresponding to the node $\tau$
  is~$X$. Since each $K_\tau$ is clopen and $\sigma$ is a finite tree,
  there is for each $n\ge-1$ a continuous retraction
  $\varphi_n:\Omega_n\to\Sigma_n$ onto a finite subset $\Sigma_n$
  of~$\Omega_n$ such that, for each node $\tau$ of~$\sigma$,
  $\varphi_{n_\tau}^{-1}(\varphi_{n_\tau}(K_\tau))=K_\tau$. Such a
  function may be obtained by collapsing to one of its points each
  atom in the Boolean subalgebra of $\Cl P_{co}(\Omega_n)$ generated
  by the clopen sets $K_\tau$ such that $n_\tau=n$.

  Let $Y=\Sigma_{-1}$ and $\Sigma$ be the discrete signature whose
  $n$-ary symbols are those of~$\Sigma_n$ for each $n\ge0$. We define
  $F$ to consist of the $\Sigma$-terms over the set $Y$ of typed shape
  $\sigma$ together with all their subterms and an additional element
  $\bot$. When the term operations applied to the elements of $F$
  produce a term in $F$, the operation is defined to give that value;
  otherwise, the value of the operation is~$\bot$. Then, $F$ is a
  nilpotent $\Sigma$-algebra and, through the continuous functions
  $\varphi_n$, we may view $F$ as a nilpotent topological
  $\Omega$-algebra which is generated by the composite mapping
  $X\xrightarrow{\varphi_{-1}}{} Y\hookrightarrow F$. This generating
  mapping extends uniquely to a continuous homomorphism
  $\varphi:T_\Omega(X)\to F$ which is readily seen to have the
  required property.
\end{proof}

\begin{Prop}
  \label{p:free-Stone-embedding-of-terms}
  Let $X$ and each $\Omega_n$ be Hausdorff 0-dimensional spaces. Then
  the topological algebra $T_\Omega (X)$ is residually finite.
  Moreover, for every Stone pseudovariety \Cl S containing $\Cl
  S_{\mathrm{nil}}\cap\pv{Fin}_\Omega$, $T_\Omega (X)$ is
  homeo\-morphic-isomorphic to the subalgebra of $\Om X{\Cl S}$
  generated by $X$.
\end{Prop}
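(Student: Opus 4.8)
The plan is to treat the two assertions in turn, building everything on the finite algebras $\Sigma(t,\Theta)$ constructed just before the statement.

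\emph{Residual finiteness.} Let $s,t$ be distinct elements of~$T_\Omega(X)$. We choose the data $\Theta$ so that, besides the leaves and operation symbols occurring in~$t$, the set $X^t$ also contains every element of~$X$ occurring in~$s$ and each $\Omega_n^t$ also contains every $n$-ary symbol occurring in~$s$ (this is legitimate, as $\Theta$ is only required to contain the data associated with~$t$); here 0-dimensionality of~$X$ and of the~$\Omega_n$ is exactly what makes the needed continuous retractions $\alpha$ and~$\beta_n$ onto these finite subsets available. By Proposition~\ref{p:free-top-algebra}, the continuous map $X\to\Sigma(t,\Theta)$ sending $x\in X$ to the leaf subterm $\alpha(x)$ of~$t$ when $\alpha(x)$ occurs in~$t$, and to~$\bot$ otherwise, extends to a continuous homomorphism $\hat\varphi\colon T_\Omega(X)\to\Sigma(t,\Theta)$. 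A routine induction on the structure of a term~$t'$ all of whose labels lie in the finite sets tracked by~$\Theta$ shows that $\hat\varphi(t')=t'$ if $t'$ is a subterm of~$t$ and $\hat\varphi(t')=\bot$ otherwise. In particular $\hat\varphi(t)=t$, whereas $\hat\varphi(s)\in\{s,\bot\}$; in either case $\hat\varphi(s)\neq\hat\varphi(t)$, so $\hat\varphi$ separates $s$ and~$t$ (and in the one situation not covered by the construction, namely when $s$ involves an operation symbol whose arity exceeds every arity occurring in~$t$, one has $\hat\varphi(s)=\bot$ directly).

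\emph{The comparison map and a reduction.} Write $\eta\colon X\to\Om X{\St}_\Omega$ for the natural generating mapping and $\hat\eta\colon T_\Omega(X)\to\Om X{\St}_\Omega$ for its continuous homomorphic extension given by Proposition~\ref{p:free-top-algebra}; the image of~$\hat\eta$ is exactly the subalgebra of $\Om X{\St}_\Omega$ generated by~$X$, and $\hat\eta$ maps onto it. Since every finite algebra belongs to~$\St_\Omega$, the universal property of $\Om X{\St}_\Omega$ combined with the uniqueness clause of Proposition~\ref{p:free-top-algebra} shows that every continuous homomorphism $h\colon T_\Omega(X)\to F$ into a finite algebra factors as $h=g\circ\hat\eta$ for some continuous homomorphism $g\colon\Om X{\St}_\Omega\to F$. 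Hence $\hat\eta$ is injective (by the previous paragraph two distinct terms are separated by some such~$h$), and for every such~$h$ and every $y\in F$ the clopen set $h^{-1}(y)$ equals $\hat\eta^{-1}\bigl(g^{-1}(y)\bigr)$, the $\hat\eta$-preimage of a clopen subset of the Stone space $\Om X{\St}_\Omega$. It therefore suffices to show that the sets of the form $h^{-1}(y)$, with $h$ a continuous homomorphism from $T_\Omega(X)$ into a finite algebra and $y$ an element of its codomain, constitute a basis for the topology of~$T_\Omega(X)$: granting this, $\hat\eta$ is an open map onto its image (each basic open set $h^{-1}(y)$ goes onto $g^{-1}(y)$ intersected with that image), and, being a continuous bijection onto its image, it is then a homeomorphism, hence an isomorphism of topological algebras.

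\emph{The basis claim, and the main difficulty.} Fix $t\in T_\Omega(X)$ and a basic open neighbourhood $U$ of~$t$. Since $X$ and the~$\Omega_n$ are 0-dimensional and~$t$ is a finite tree, $U$ may be taken to consist of the terms having the same shape as~$t$ whose label at each node~$p$ lies in a prescribed clopen neighbourhood $W_p$ of the label of~$p$ in~$t$. Now choose $\Theta$ so that, for every label~$\ell$ of~$t$, the fibre over~$\ell$ of the relevant retraction ($\alpha$ for leaves in~$X$, $\beta_0$ for leaves in~$\Omega_0$, $\beta_n$ for internal nodes of arity~$n$) is contained in the intersection of the finitely many neighbourhoods $W_p$ taken over the nodes~$p$ of~$t$ that carry the label~$\ell$; the remaining part of each of the spaces $X$ and~$\Omega_n$ is sent by the corresponding retraction to one fixed ``garbage'' element of that space not occurring in~$t$, which exists as soon as the space is infinite (and when the space is finite the identity retraction already suffices, since there the singleton neighbourhoods are available). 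By the description of~$\hat\varphi$ from the first step, $\hat\varphi(t)=t$ and $\hat\varphi^{-1}(t)$ is precisely the set of terms having the shape of~$t$ whose label at each node lies in the chosen fibre over the corresponding label of~$t$, so that $t\in\hat\varphi^{-1}(t)\subseteq U$ by construction; taking $h=\hat\varphi$ and $y=t$ proves the claim. The crux of the whole proof is exactly this choice of~$\Theta$: the retractions must simultaneously be fine enough (fibres inside the prescribed clopen neighbourhoods), genuine (the identity on the finite sets out of which $\Sigma(t,\Theta)$ is built, so that $\hat\varphi(t)=t$ and the description of $\hat\varphi^{-1}(t)$ applies), and able to absorb the complement of those fibres without inflating any relevant fibre --- which is what forces the garbage element into the picture and the separate treatment of infinite and finite coordinate spaces.
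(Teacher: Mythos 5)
Your proof is correct, and the comparison with the paper is worth recording. The first half is essentially the paper's argument: the same algebras $\Sigma(t,\Theta)$, the same enlargement of $X^t$ and $\Omega^t_n$ to cover the labels of both $s$ and $t$, and the same induction; your invariant ($\hat\varphi$ fixes every subterm of~$t$ and sends every other term with tracked labels to~$\bot$) is a slightly stronger, cleaner statement than the paper's ``$\varphi(s')=t'$ implies $s'=t'$'', but the content is identical, and your arity caveat matches the paper's convention that $\beta_n$ is only defined for $n<m$. For the second half the strategies agree up to the injectivity of the comparison map, but then diverge at the decisive step: the paper packages the separating algebras into a product $P$, obtains an injective continuous $\delta:T_\Omega(X)\to P$, and concludes by declaring $\gamma$ and the restriction of $\varepsilon$ to $\langle X\rangle$ to be a pair of mutually inverse continuous homomorphisms --- which tacitly uses that $\delta$ is a homeomorphism onto its image, something that is not automatic for a continuous injection with non-compact domain. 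You instead prove exactly the fact that is needed: the fibres $h^{-1}(y)$ of continuous homomorphisms into finite algebras form a basis of the topology of $T_\Omega(X)$, achieved by choosing the retractions $\alpha$, $\beta_n$ with fibres inside prescribed clopen neighbourhoods and a ``garbage'' element absorbing the complement. This makes the openness of the embedding explicit, so your write-up is, if anything, more complete than the printed proof at its one delicate point. Two small things to add in a polished version: when you shrink the fibres over the distinct labels of~$t$, also make them pairwise disjoint and disjoint from the garbage element, so that the retraction is actually well defined and is the identity on the chosen finite set; and note explicitly that a finite $T_1$ space is discrete, which is what legitimizes the identity retraction in the finite case.
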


\begin{proof}
  Consider distinct terms $s$ and $t$ in $T_\Omega(X)$. Since
  $T_\Omega(X)$ is a Hausdorff 0-dimensional space, there is a clopen
  subset $K$ separating $s$ and $t$. By definition of the topology
  of~$T_\Omega(X)$, $K$ is a union of clopen sets of constant typed
  shape, each of which is a product of clopen sets at the
  corresponding node sets. Hence, we may assume that $K$ is one of the
  clopen sets in the statement of Lemma~\ref{l:nilpotent-algebra}. The
  lemma yields a continuous homomorphism $\varphi:T_\Omega(X)\to F$
  into a finite algebra $F$ such that $K=\varphi^{-1}(\varphi(K))$
  which, therefore, separates the terms $s$ and $t$. This establishes
  that $T_\Omega(X)$ is residually finite.

  Let $\iota:T_\Omega(X)\to\Om X{\Cl S}$ be the natural continuous
  homomorphism given by
  Proposition~\ref{p:term-algebra-free-as-top-algebra} and let
  $\image\iota$ be the image of~$\iota$. To prove the last statement
  in the proposition it suffices to show that, for every clopen subset
  $K$ of~$T_\Omega(X)$ with the property considered in
  Lemma~\ref{l:nilpotent-algebra}, the set $\iota(K)$ is the
  intersection of an open subset of~$\Om X{\Cl S}$ with $\image\iota$.
  Let $\varphi:T_\Omega(X)\to F$ be the continuous homomorphism given
  by Lemma~\ref{l:nilpotent-algebra} and let $\hat{\varphi}:\Om
  X{\Cl S}\to F$ be the unique continuous homomorphism such that
  $\hat{\varphi}\circ\iota=\varphi$. To conclude the proof, it remains
  to observe that
  $\hat{\varphi}^{-1}(\varphi(K))\cap\image\iota=\iota(K)$.
\end{proof}

The universal property of \Cl S-free Stone topological algebras can be
somewhat extended as follows to consider residually \Cl S Stone
topological algebras.

\begin{Prop}
  \label{p:free-Stone2}
  Let \Cl S be a Stone pseudovariety and let $X$ be a topological
  space. Then, for every Stone topological algebra $S$ that is
  residually \Cl S and every continuous mapping $\varphi:X\to S$,
  there is a unique continuous homomorphism $\hat{\varphi}:\Om X{\Cl
    S}\to S$ such that $\hat{\varphi}\circ\iota=\varphi$, where
  $\iota:X\to\Om X{\Cl S}$ is the natural generating mapping.
\end{Prop}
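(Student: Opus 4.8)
The plan is to reduce the claim about an arbitrary residually \Cl S algebra $S$ to the already-established universal property of $\Om X{\Cl S}$ with respect to members of \Cl S. The key observation is that a residually \Cl S Stone topological algebra embeds, as a closed subalgebra, into a product of members of \Cl S. First I would argue that, since $S$ is residually \Cl S, for every pair of distinct points $a,b\in S$ there is a continuous homomorphism $\psi_{a,b}:S\to T_{a,b}$ into a member $T_{a,b}$ of \Cl S separating $a$ and $b$; taking the product $\pi:S\to\prod_{a\ne b}T_{a,b}$ of all these, one gets an injective continuous homomorphism. As $S$ is compact, $\pi$ is a homeomorphism onto its image, which is a closed subalgebra of the product. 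Thus $S$ may be identified with a closed subalgebra of a product $P=\prod_i T_i$ with each $T_i\in\Cl S$, and for each $i$ the composite $\varphi_i=p_i\circ\pi\circ\varphi:X\to T_i$ (where $p_i:P\to T_i$ is the projection) is a continuous mapping into a member of \Cl S.

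Next, applying the defining universal property of $\Om X{\Cl S}$ to each $\varphi_i$, I obtain a unique continuous homomorphism $\hat\varphi_i:\Om X{\Cl S}\to T_i$ with $\hat\varphi_i\circ\iota=\varphi_i$. By the universal property of the product (as both a topological space and an algebra), these assemble into a continuous homomorphism $g:\Om X{\Cl S}\to P$ with $p_i\circ g=\hat\varphi_i$ for all $i$, and hence $g\circ\iota=\pi\circ\varphi$. It remains to see that $g$ factors through the closed subalgebra $\pi(S)\cong S$. This is where the only real work lies: one checks that $g^{-1}(\pi(S))$ is a subalgebra of $\Om X{\Cl S}$ (it is a subalgebra because $g$ is a homomorphism and $\pi(S)$ is a subalgebra of $P$) which is closed (since $g$ is continuous and $\pi(S)$ is closed in $P$, using that $S$ is compact), and which contains $\iota(X)$ (because $g(\iota(x))=\pi(\varphi(x))\in\pi(S)$). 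Since $\iota(X)$ generates a dense subalgebra of $\Om X{\Cl S}$, the closed subalgebra $g^{-1}(\pi(S))$ must be all of $\Om X{\Cl S}$. Therefore $g$ takes values in $\pi(S)$, and composing with $\pi^{-1}:\pi(S)\to S$ (continuous, since $\pi$ is a homeomorphism onto its image) yields the desired continuous homomorphism $\hat\varphi:\Om X{\Cl S}\to S$ with $\hat\varphi\circ\iota=\varphi$.

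Uniqueness is immediate and does not depend on any of the above: if $\hat\varphi$ and $\hat\varphi'$ are two continuous homomorphisms $\Om X{\Cl S}\to S$ agreeing on $\iota(X)$, then they agree on the subalgebra generated by $\iota(X)$, which is dense in $\Om X{\Cl S}$; since $S$ is Hausdorff, two continuous maps agreeing on a dense set agree everywhere. The main obstacle, such as it is, is the bookkeeping in the factorization step — in particular making sure that ``$S$ residually \Cl S'' really does give a closed embedding into a product of members of \Cl S rather than merely a dense one, which is exactly where compactness of $S$ is used. Everything else is a routine application of the universal properties already available.
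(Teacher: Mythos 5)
Your proof is correct and follows essentially the same route as the paper: embed $S$ via residual $\Cl S$-ness and compactness as a closed subalgebra of a product of members of~$\Cl S$, lift each component through the defining universal property of $\Om X{\Cl S}$, assemble the resulting maps into the product, and then observe that the assembled map must land in the (closed, $X$-generated) image of~$S$. The only cosmetic difference is in the last step, where the paper reduces to the case that $\varphi$ is a generating mapping and compares the two $X$-generated images, while you argue directly that the preimage of $\pi(S)$ is a closed subalgebra containing $\iota(X)$; both are sound.
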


\begin{proof}
  Without loss of generality, we may assume that $\varphi$ is a
  generating mapping. Since $S$ is residually \Cl S, there is a
  continuous embedding $\varepsilon:S\to\prod_{i\in I}S_i$ into a
  product of members of~\Cl S. Consider the following commutative
  diagram:
  \begin{displaymath}
    \xymatrix{
      X \ar[r]^(.4)\iota \ar[d]_\varphi
      & \Om X{\Cl S}
      \ar@{-->}[rd]^{\hat{\varphi}_i}
      \ar@{-->}[d]^{\Phi}
      \ar@{-->}[ld]_{\hat{\varphi}}
      & \\
      S \ar[r]_(.35)\varepsilon
      & \prod_{i\in I}S_i \ar[r]_(.6){\pi_i}
      & S_i }
  \end{displaymath}
  where the mapping $\pi_i$ is the component projection,
  $\hat{\varphi}_i$ is the unique continuous homomorphism such that
  the outer trapezoid commutes, given by the universal property
  defining $\Om X{\Cl S}$; $\Phi$ is the mapping into the product
  induced by the $\hat{\varphi}_i$, that is, such that the right
  triangle commutes for every $i\in I$; and the existence of the
  mapping $\hat{\varphi}$ follows from the fact that both the images
  of~$\Phi$ and $\varepsilon$ in the product $\prod_{i\in I}S_i$ are
  $X$-generated and, therefore they are equal.
\end{proof}

In particular, we deduce that there is an onto continuous homomorphism
$\eta : \Om X{}\Cl S \rightarrow \Om X{}\Cl S'$ respecting generators
whenever the pair of Stone pseudovarieties satisfies $\Cl S' \subseteq
\Cl S$. We call it the \emph{natural continuous homomorphism} $\Om
X{}\Cl S \rightarrow \Om X{}\Cl S'$.

\subsection{Free Stone topological algebras are not profinite}
\label{sec:free-Stone-not-profinite}

We may use the space $\beta\mathbb{N}$ to show that free Stone
topological algebras are almost never profinite.

\begin{Thm}
  \label{t:Stone-free-not-profinite}
  If $X$ is a nonempty topological space and $\Omega\ne\Omega_0$ is an
  arbitrary topological signature, then the Stone topological algebra
  $\Om X{\St}_\Omega$ is not profinite.
\end{Thm}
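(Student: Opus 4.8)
The plan is to produce a single non-profinite Stone topological
  $\Omega$-algebra $A$ that is the closure of a subalgebra generated by
  one of its points, and then to pull non-profiniteness back to
  $\Om X{\St}_\Omega$ through its universal property together with the
  fact that profinite algebras are closed under Stone quotients
  (Theorem~\ref{t:quotient-profinite}). Granting such an $A$ with a
  point $a_0$ for which $\langle a_0\rangle$ is dense in~$A$: since
  $X\ne\emptyset$, the constant map $\varphi\colon X\to A$ with value
  $a_0$ is continuous and lands in the member~$A$ of $\Cl S=\St_\Omega$,
  so the universal property of $\Om X{\St}_\Omega$ gives a continuous
  homomorphism $\hat\varphi\colon\Om X{\St}_\Omega\to A$ with
  $\hat\varphi\circ\iota=\varphi$. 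Its image is a closed subalgebra
  of~$A$ containing $a_0$, hence containing $\langle a_0\rangle$, hence
  all of~$A$; so $\hat\varphi$ is onto. If $\Om X{\St}_\Omega$ were
  profinite, then $\ker\hat\varphi$ would be a closed congruence, the
  quotient $\Om X{\St}_\Omega/\ker\hat\varphi$ would be compact (by
  Proposition~\ref{p:quotient-Hausdorff}) and hence isomorphic, as a
  topological algebra, to the Stone space~$A$; Theorem~\ref{t:quotient-profinite}
  would then make $A$ profinite, a contradiction. Thus everything comes
  down to building $A$ and checking it is not profinite.

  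For the construction, fix $m\ge1$ and $u\in\Omega_m$, which is
  possible since $\Omega\ne\Omega_0$. Take the underlying space of~$A$
  to be $\beta\mathbb{N}$, the \v Cech-Stone compactification of the
  discrete space $\mathbb{N}=\{0,1,2,\ldots\}$, which is a Stone space;
  let $s\colon\beta\mathbb{N}\to\beta\mathbb{N}$ be the continuous
  extension of the successor map $n\mapsto n+1$. Make $A$ a topological
  $\Omega$-algebra by declaring every $w\in\Omega_k$ with $k\ge1$ to
  act by $w_A(z_1,\ldots,z_k)=s(z_1)$ and every $c\in\Omega_0$ to act as
  the constant~$0$; each evaluation mapping $E_k$ is then continuous, so
  $A$ is a Stone topological $\Omega$-algebra. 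Applying $u$ repeatedly
  to the diagonal gives $u_A(0,\ldots,0)=1$, $u_A(1,\ldots,1)=2$, and so
  on, while the nullary symbols only reproduce~$0$; hence
  $\langle 0\rangle=\mathbb{N}$, which is dense in $\beta\mathbb{N}$. So
  $A$ with $a_0=0$ is as needed, once we know $A$ is not profinite.

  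Showing $A$ is not profinite is the only substantive step. I would
  use the equivalence
  \eqref{item:summary-1}$\,\Leftrightarrow\,$\eqref{item:summary-6} of
  Theorem~\ref{t:summary}: with $S=\{2^k:k\ge0\}\subseteq\mathbb{N}$ and
  $L=\overline S$ its (clopen) closure in $\beta\mathbb{N}$, so that
  $L\cap\mathbb{N}=S$, it suffices to show that there is no continuous
  homomorphism $\psi\colon A\to F$ onto a finite $\Omega$-algebra~$F$
  with $\psi^{-1}(\psi(L))=L$. Suppose $\psi$ is such. Being an
  $\Omega$-homomorphism, it satisfies
  $\psi(n+1)=\psi\bigl(u_A(n,\ldots,n)\bigr)=u_F\bigl(\psi(n),\ldots,\psi(n)\bigr)=g(\psi(n))$,
  where $g\colon F\to F$ is $g(f)=u_F(f,\ldots,f)$; thus
  $\psi(n)=g^{\,n}(\psi(0))$, and since $F$ is finite the sequence
  $\bigl(\psi(n)\bigr)_{n\in\mathbb{N}}$ is eventually periodic. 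Then
  each fibre $\psi^{-1}(f)\cap\mathbb{N}$ is an eventually periodic
  subset of $\mathbb{N}$, with a common threshold and period, and
  $\psi^{-1}(\psi(L))=L$ forces $S=L\cap\mathbb{N}$ to be saturated by
  the kernel of $\psi|_{\mathbb{N}}$, hence a finite union of such
  fibres, hence itself eventually periodic. But the gaps
  $2^{k+1}-2^k=2^k$ in~$S$ are unbounded, whereas an infinite eventually
  periodic set has bounded gaps, a contradiction; so $A$ is not
  profinite. The combinatorial core is exactly that $\beta\mathbb{N}$
  equipped with the shift operation is not residually finite ---
  equivalently, that an aperiodic subset of $\mathbb{N}$ is not
  saturated by the kernel of any homomorphism to a finite algebra ---
  and this is where I expect the real content of the argument to lie;
  the reduction of the first paragraph and the verifications of the
  second are routine.
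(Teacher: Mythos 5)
Your proof is correct, and while it shares its skeleton with the paper's, the way you establish non-profiniteness is genuinely different. Both arguments hinge on the same object --- $\beta\mathbb{N}$ equipped with a shift-like $\Omega$-structure, generated by $0$ --- and on the same reduction: a point-generated non-profinite Stone topological algebra receives an onto continuous homomorphism from $\Om X{\St}_\Omega$, and Theorem~\ref{t:quotient-profinite} then transfers non-profiniteness back to the free algebra. The paper, however, never shows directly that $\beta\mathbb{N}$ with the shift fails to be profinite; it uses $\beta\mathbb{N}$ only to bound from below the cardinality of $\Om1{\St}_{\Omega'}$ (for the one-symbol subsignature $\Omega'=\{u\}$) by $2^{2^{\aleph_0}}$, contrasts this with the bound $2^{\aleph_0}$ valid for any $1$-generated profinite algebra over a finite signature, and performs the reduction in two stages, through $\Om X{\St}_{\Omega'}$. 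You instead prove outright that $(\beta\mathbb{N},s)$ is not residually finite, exhibiting the clopen set $\overline{\{2^k:k\ge0\}}$ and combining Theorem~\ref{t:summary}(\ref{item:summary-6}) with the observation that any homomorphism onto a finite algebra forces eventual periodicity along $\mathbb{N}$, which is incompatible with the unbounded gaps of $\{2^k:k\ge 0\}$. Your route is more elementary and more informative --- it produces an explicit clopen witness and avoids cardinal arithmetic --- at the cost of a small combinatorial verification; the paper's route is shorter once the cardinality facts about $\beta\mathbb{N}$ and about $1$-generated profinite algebras are taken from the literature. The steps you label routine (clopenness of $\overline{S}$ in $\beta\mathbb{N}$ for $S\subseteq\mathbb{N}$, density of the subalgebra generated by $0$, the pullback of non-profiniteness via Theorem~\ref{t:quotient-profinite}) all check out.
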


\begin{proof}
  By assumption, we may choose $n\ge1$ such that
  $\Omega_n\ne\emptyset$. Let $u\in\Omega_n$ be fixed and consider the
  signature $\Omega'=\Omega'_n=\{u\}$.

  We define a structure of topological $\Omega'$-algebra on
  $\beta\mathbb{N}$ by the evaluation mapping
  \begin{displaymath}
    E_n:\Omega'_n\times(\beta\mathbb{N})^n\to\beta\mathbb{N}
  \end{displaymath}
  which is given by the projection on the last component followed by
  addition by~1. Here, by addition by~1 we mean the unique extension
  to a continuous mapping $\beta\mathbb{N}\to\beta\mathbb{N}$ of
  addition by~1 viewed as a mapping $\mathbb{N}\to\beta\mathbb{N}$. As
  $\beta\mathbb{N}$ is a Stone space
  (cf.~\cite[Proposition~3.9]{Walker:1974}), it is thus a Stone
  topological $\Omega'$-algebra, which is generated by the
  element~$0$. We claim that it is not profinite. Indeed, since
  $\Omega'$~is finite, up to isomorphism there are only countably many
  finite $\Omega'$-algebras. As $\beta\mathbb{N}$ is finitely
  generated, there are only finitely many continuous homomorphisms
  into a specific finite algebra. Hence, if $\beta\mathbb{N}$ would be
  residually finite, then it would embed into an algebra of
  cardinality $2^{\aleph_0}$, which contradicts the fact that its
  cardinality is really $2^{2^{\aleph_0}}$
  (cf.~\cite[Corollary~3.6.12]{Engelking:1989}). Hence, the
  $\Omega'$-algebra $\beta\mathbb{N}$ is not profinite.

  We may also view $\beta\mathbb{N}$ as a Stone topological
  $\Omega$-algebra by interpreting every operation different from $u$
  as the constant operation with value $0$. If this enriched structure
  would be profinite, so would be the original one.
  
  By the universal property of $\Om X{\St_\Omega}$, the constant
  mapping $\varphi:X\to\beta\mathbb{N}$ with value $0$ determines a
  unique continuous homomorphism $\hat{\varphi}:\Om
  X{\St_\Omega}\to\beta\mathbb{N}$ such that
  $\hat{\varphi}\circ\iota=\varphi$. Since $\beta\mathbb{N}$ is
  generated by the image of~$\varphi$, the mapping $\hat{\varphi}$ is
  onto. Since $\beta\mathbb{N}$ is not profinite as a topological
  $\Omega$-algebra, by Theorem~\ref{t:quotient-profinite} we conclude
  that neither is $\Om X{\St}_\Omega$, as desired.
\end{proof}

\subsection{Finite quotients}
\label{sec:quotients}

One of the basic observations in the theory of profinite algebras is
that all finite quotients of $\Om XV$ belong to $\pv V$. We next show
an analog of this statement in the realm of Stone pseudovarieties. We
start with a technical observation which extends a well known result
on pseudovarieties of finite algebras \cite[Lemma~4.1]{Almeida:2002a}.

\begin{Lemma}
  \label{l:recognition}
  Let $\mathcal{S}$ be a Stone pseudovariety and $S$ be a Stone
  topological algebra which is residually $\mathcal S$. If $K$ is a
  clopen subset of~$S$, then there exist $T\in \mathcal S$ and a
  continuous homomorphism $\varphi : S \rightarrow T$ such that the
  equality $\varphi^{-1}(\varphi(K))=K$ holds.
\end{Lemma}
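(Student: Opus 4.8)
The plan is to use compactness of $S$ together with the fact that $S$ is residually $\mathcal S$, mimicking the classical argument for pseudovarieties of finite algebras but with continuous homomorphisms into members of $\mathcal S$ in place of homomorphisms onto finite algebras. First I would fix a clopen subset $K \subseteq S$. For each point $s \in S$, there is a ``distinguishing'' behaviour: if $s \in K$ and $s' \notin K$, then since $S$ is residually $\mathcal S$ there is a continuous homomorphism $\psi_{s,s'} : S \to T_{s,s'}$ with $T_{s,s'} \in \mathcal S$ and $\psi_{s,s'}(s) \neq \psi_{s,s'}(s')$. I would like to cover $S$ by finitely many clopen sets, each of which is entirely inside $K$ or entirely outside $K$ and is saturated by the kernel of one such homomorphism, but a single homomorphism only separates one pair, so the first real step is a compactness reduction.

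The key steps, in order: (1) Consider the boundary-free situation: $K$ and $S \setminus K$ are both clopen, hence compact. For each pair $(s,s')$ with $s \in K$, $s' \in S \setminus K$, pick $\psi_{s,s'}$ as above; by continuity the set $U_{s,s'} = \psi_{s,s'}^{-1}(\text{clopen nbhd of }\psi_{s,s'}(s)$ not containing $\psi_{s,s'}(s'))$ is a clopen neighbourhood of $s$ whose image under $\psi_{s,s'}$ misses $\psi_{s,s'}(s')$. (2) Fix $s' \in S\setminus K$; the sets $U_{s,s'}$ ($s \in K$) cover the compact set $K$, so finitely many $\psi_{s_1,s'},\dots,\psi_{s_k,s'}$ suffice; their product $\psi_{s'} : S \to \prod_j T_{s_j,s'}$ lands in $\mathcal S$ (finite products) and has the property that $\psi_{s'}(s') \notin \psi_{s'}(K)$ — i.e. $\psi_{s'}^{-1}(\psi_{s'}(K))$ is a clopen set containing $K$ but not $s'$. (3) Now let $s'$ range over $S \setminus K$: the clopen sets $V_{s'} = S \setminus \psi_{s'}^{-1}(\psi_{s'}(K))$ cover the compact set $S \setminus K$, so finitely many $\psi_{s'_1},\dots,\psi_{s'_l}$ suffice; let $\varphi : S \to T$ be the product of all the $\psi_{s'_i}$, with $T = \prod_i (\text{codomain of }\psi_{s'_i}) \in \mathcal S$. (4) Check $\varphi^{-1}(\varphi(K)) = K$: the inclusion $\supseteq$ is trivial; for $\subseteq$, if $x \notin K$ then $x \in V_{s'_i}$ for some $i$, so $\psi_{s'_i}(x) \notin \psi_{s'_i}(K)$, hence $\varphi(x) \notin \varphi(K)$ since the $i$-th component already separates. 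Since $K$ is clopen, $\varphi(K)$ is a subset of the finite (or at least compact) image, and $K = \varphi^{-1}(\varphi(K))$ as required.

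I expect the main obstacle to be a bookkeeping subtlety rather than a conceptual one: one must be careful that $T$ is genuinely a member of $\mathcal S$ — this is fine because $\mathcal S$ is closed under \emph{finite} direct products and all the products formed above are finite (first over the finitely many $s_j$ for a fixed $s'$, then over the finitely many $s'_i$), and that the homomorphism $\varphi$ is continuous, which holds because each component is continuous and the product topology makes the induced map continuous. A second point to handle carefully is that we only need $\varphi^{-1}(\varphi(K)) = K$ and not that $\varphi$ be surjective; if one prefers $\varphi$ onto, one replaces $T$ by $\varphi(S)$, which is a continuous homomorphic image of the compact $S$, hence compact and $0$-dimensional (being a subspace of a Stone space), thus a Stone topological algebra, and it is a closed subalgebra of $T$, so it again lies in $\mathcal S$ by closure under closed subalgebras and Stone quotients — though the statement as phrased does not require this. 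Finally, note the whole argument uses only that $K$ and its complement are clopen (hence compact) and that distinct points of $S$ can be separated into $\mathcal S$; nothing beyond Proposition~\ref{p:quotient-Hausdorff}-style compactness facts and the definition of Stone pseudovariety is needed.
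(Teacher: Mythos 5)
Your proof is correct and follows essentially the same two-step compactness argument as the paper, only with the roles of $K$ and $S\setminus K$ interchanged: the paper first fixes a point $s\in K$ and covers the compact set $S\setminus K$ to build a homomorphism $\gamma_s$ with $\gamma_s(s)\notin\gamma_s(S\setminus K)$, and then covers $K$, whereas you fix $s'\in S\setminus K$, cover $K$ first, and then cover $S\setminus K$. One small imprecision worth noting: the sets $V_{s'}=S\setminus\psi_{s'}^{-1}\bigl(\psi_{s'}(K)\bigr)$ are open (because $\psi_{s'}(K)$ is compact, hence closed) but need not be clopen; since only openness is used in the covering argument, nothing breaks.
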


\begin{proof}
  Let $s\in K$. Then, for each $u\in S\setminus K$ there is a
  continuous homomorphism $\alpha_u : S \rightarrow M_u$ with
  $M_u\in\mathcal S$ such that $\alpha_u(s)\not =\alpha_u(u)$. Since
  $\{\alpha_u(s)\}$ is closed in $M_u$, we obtain an open subset
  $O_u=\alpha^{-1}_u(M_u \setminus \{\alpha_u(s)\})$ of $S$. We see
  that $s\not\in O_u$ and $u\in O_u$. In this way we get an open cover
  $\{O_u\}_{u\in S\setminus K}$ of the closed subset $S\setminus K$ of
  the compact space $S$. It follows that there is a finite set
  $F\subseteq S\setminus K$ such that $S\setminus K \subseteq
  \bigcup_{u\in F} O_u$. We consider the finite product
  $T_s=\prod_{u\in F} M_u\in\mathcal S$ and the corresponding
  continuous homomorphism $\gamma_s : S \rightarrow T_s$. We claim
  that $\gamma_s(s)\not\in \gamma_s (S\setminus K)$. Indeed, for $x\in
  S\setminus K$ there is $u\in F$ such that $x\in O_u$ and so
  $(\gamma_s(x))_u=\alpha_u(x)\in M_u \setminus \{\alpha_u(s)\}$ and
  $\alpha_u(x)\not=\alpha_u(s)$ follows. This means that
  $\gamma_s(x)\not =\gamma_s(s)$.
  
  Since $S$ and $T_s$ are compact, $\gamma_s(S\setminus K)$ is closed.
  Hence there is an open subset $U_s\subseteq T_s$ such that
  $\gamma_s(s)\in U_s$ and $U_s \cap \gamma_s(S\setminus
  K)=\emptyset$. We denote $L_s=\gamma_s^{-1}(U_s)$ for which we have
  $\gamma_s (L_s) \cap \gamma_s (S\setminus K) =\emptyset$ and $s\in
  L_s$. Since there is such an open set $L_s$ for every $s\in K$, we
  may choose a finite subcover from the open cover $(L_s)_{s\in K}$ of
  $K$. Hence, there is a finite subset $G\subseteq K$ such that $K$ is
  covered by $(L_s)_{s\in G}$. Again, we consider the finite product
  $T=\prod_{s\in G} T_s\in\mathcal S$ and the corresponding continuous
  homomorphism $\varphi : S \rightarrow T$ whose components are the
  $\gamma_s$ with $s\in G$. It remains to show that
  $\varphi^{-1}(\varphi(K))=K$. The inclusion $K\subseteq
  \varphi^{-1}(\varphi(K))$ being trivial, assume for a moment that
  $\varphi^{-1}(\varphi(K))\not\subseteq K$. Then there are $y\not\in
  K$ and $z\in K$ such that $\varphi(y)=\varphi(z)$. If we take some
  $s\in G$ satisfying $z\in L_s$, then
  $(\varphi(y))_s=(\varphi(z))_s$ gives $\gamma_s(y)=\gamma_s(z)$.
  Since $y\in S\setminus K$, we get $\gamma_s(z)\in\gamma_s(S\setminus
  K)$ which contradicts the observation that $\gamma_s (L_s) \cap
  \gamma_s (S\setminus K) =\emptyset$. Hence, we conclude that
  $\varphi^{-1}(\varphi(K))\subseteq K$.
\end{proof}

Now we are ready to prove a statement concerning finite quotients of 
a Stone topological algebra which is residually \Cl S.

\begin{Prop}
  \label{p:finite-quotient-residually-S}
  Let $\mathcal{S}$ be a Stone pseudovariety and $S$ be a Stone
  topological algebra which is residually $\mathcal S$. Then every
  finite quotient of $S$ belongs to~\Cl S.
\end{Prop}

\begin{proof}
  Let $\alpha : S \rightarrow F$ be a continuous homomorphism onto a
  finite discrete algebra $F$. For each element $f\in F$ we may
  consider the clopen set $\alpha^{-1}(f)$ and a continuous
  homomorphism $\varphi_f : S \rightarrow T_f$ into a member of~\Cl S
  such that
  $\alpha^{-1}(f)=\varphi_f^{-1}(\varphi_f(\alpha^{-1}(f)))$, whose
  existence is ensured by Lemma~\ref{l:recognition}. Let $T$ be the
  finite product $\prod_{f\in F}T_f$, which lies in~\Cl S, and
  $\varphi : S \rightarrow T$ the corresponding continuous
  homomorphism, whose components are the $\varphi_f$ ($f\in F$). We
  let $T'$ be the image of $\varphi$, which also belongs to~\Cl S.

  We claim that if two elements of $S$ are identified by~$\varphi$,
  then they are also identified by~$\alpha$. For this purpose, let
  $s,t\in S$ be a pair of elements such that $\varphi(s)=\varphi(t)$.
  We take $g=\alpha(s)$ and recall that
  $\varphi_g^{-1}(\varphi_g(\alpha^{-1}(g)))=\alpha^{-1}(g)$. Since
  $\varphi_g(t)=\varphi_g(s)\in \varphi_g(\alpha^{-1}(g))$, we see
  that $t\in\alpha^{-1}(g)$. Hence, we have $\alpha(t)=g=\alpha(s)$,
  which establishes the claim. It follows that there is a homomorphism
  of algebras $\beta : T' \rightarrow F$ satisfying $\beta \circ
  \varphi = \alpha$, as depicted in the following commutative diagram,
  where $\pi_f$ denotes a component projection.
  \begin{displaymath}
    \xymatrix{
      F
      & S \ar[l]_\alpha \ar[r]^{\varphi_f} \ar[d]^\varphi
      & T_f \\
      & T' \ar@{-->}[lu]^\beta \ar@{^{(((}->}[r]
      & T \ar[u]_{\pi_f}
    }
  \end{displaymath}
  Moreover, for every $f\in F$ we know that $\beta^{-1}
  (f)=\varphi(\alpha^{-1}(f))$ is a closed subset of $T'$ because
  $\alpha^{-1}(f)$ is clopen in $S$. Hence the preimage of any
  (closed) subset of $F$ is closed so that $\beta$ is a continuous
  homomorphism. Since we already saw that $\beta$ is onto and
  $T'\in\Cl S$, the proof is complete.
\end{proof}

\section{Preliminaries on Boolean algebra}
\label{sec:prel-bool-algebra}

Before proceeding in the next section with our approach to duality, we
present here the required preliminaries on Boolean algebras. This
includes three versions of continuity conditions involving the
inverses of evaluation mappings on a topological algebra.

\subsection{Boolean algebras of sets}
\label{sec:Boole-sets}

Given Boolean subalgebras $\Cl B_i$ of $\Cl P(S_i)$ ($i=1,\ldots,n$),
we may consider the Boolean subalgebra $\bigoplus_{i=1}^n\Cl B_i$ of
$\Cl P(S_1\times\cdots\times S_n)$ generated by all \emph{boxes} of
the form
\begin{displaymath}
  L_1\times \cdots\times L_n\ (L_i\in\Cl B_i).
\end{displaymath}
Note that, if each $\Cl B_i$ is the Boolean algebra of all clopen
subsets of a Stone space $S_i$, then the members
of~$\bigoplus_{i=1}^n\Cl B_i$ are the clopen subsets of the product
space $\prod_{i=1}^nS_i$. The construction $\bigoplus_{i=1}^n\Cl B_i$
is a concrete realization of what is called in the Boolean algebra
literature the \emph{sum} or the \emph{tensor product} of the Boolean
algebras $\Cl B_i$. Since the intersection of two boxes is a box and
the complement in $S_1\times\cdots\times S_n$ of a box is a finite
union of pairwise disjoint of boxes, the members
of~$\bigoplus_{i=1}^n\Cl B_i$ are finite unions of pairwise disjoint
boxes. In fact, for each element $P$ of~$\bigoplus_{i=1}^n\Cl B_i$, we
may find finite partitions
\begin{equation}
  \label{eq:ppd-partition}
  \stepcounter{equation}
  \tag{\theequation${}_i$} S_i=\biguplus_{j\in J_i}L_{i,j}\
  (L_{i,j}\in\Cl B_i),
\end{equation}
called a \emph{$\Cl B_i$-partition}, ($i=1,\ldots,n$) such that $P$ is
the union of a set of boxes of the form $L_{1,j_1}\times \cdots\times
L_{n,j_n}$: we can take the $L_{i,j}$ to be the atoms in the Boolean
algebra of subsets $S_i$ generated by the $i$th component projections
of the given boxes defining $P$. Such a list
$\bigl(\eqref{eq:ppd-partition}: i=1,\ldots,n\bigr)$ of partitions is
said to be a \emph{$(\Cl B_i)_i$-mesh} for~$P$. In case all $S_i$ are
equal and all $\Cl B_i$ are equal to \Cl B, then we refer simply to a
\emph{\Cl B-mesh} for~$P$.

We proceed to present an alternative way of finding a \Cl B-mesh for
$P$ that in a sense gives a minimum choice, which is independent of
any particular decomposition of $P$ as a union of boxes.

Given a subset $P$ of the Cartesian product $S_1\times\cdots\times
S_n$, we consider the binary relation on $S_i$ defined by
$s_i\mathrel{\rho_i}s_i'$ if, for all $s_j\in S_j$ ($j\ne i$), the
following equivalence holds:
\begin{displaymath}
  (s_1,\ldots,s_{i-1},s_i,s_{i+1},\ldots,s_n)\in P
  \iff
  (s_1,\ldots,s_{i-1},s_i',s_{i+1},\ldots,s_n)\in P.
\end{displaymath}
Note that $\rho_i$ is an equivalence relation on~$S_i$. We denote by
$s_i/\rho_i$ the $\rho_i$-class of~$s_i$.

\begin{Lemma}
  \label{l:canonical-decomposition}
  Let $\Cl B_i$ be a Boolean algebra of subsets of~$S_i$
  ($i=1,\ldots,n$) and suppose that $P$ is an element of
  $\bigoplus_{i=1}^n\Cl B_i$. Consider a list of partitions
  \eqref{eq:ppd-partition} ($i=1,\ldots,n$) defining a $(\Cl
  B_i)_i$-mesh for~$P$ and the equivalence relations $\rho_i$ defined
  above. Then, the following hold:
  \begin{enumerate}[(1)]
  \item\label{item:canonical-decomposition-1} if $s,t\in L_{i,j}$ for
    some indices $i$ and $j$, then $s\mathrel{\rho_i}t$;
  \item\label{item:canonical-decomposition-2}
    each equivalence relation $\rho_i$ has finite index;
  \item\label{item:canonical-decomposition-3}
    the classes of each $\rho_i$ belong to~$\Cl B_i$;
  \item\label{item:canonical-decomposition-4}
    we have a finite disjoint decomposition
    \begin{equation}
      \label{eq:canonical-decomposition-0}
      P=\bigcup_{(s_1,\ldots,s_n)\in P}(s_1/\rho_1)\times\cdots\times(s_n/\rho_n).
    \end{equation}
  \end{enumerate}
\end{Lemma}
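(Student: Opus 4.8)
The plan is to work directly from the mesh partitions \eqref{eq:ppd-partition}, establishing the four claims in the stated order, since each builds naturally on the previous ones.

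For \eqref{item:canonical-decomposition-1}, suppose $s,t\in L_{i,j}$ for some $i,j$. Since the list \eqref{eq:ppd-partition} is a $(\Cl B_i)_i$-mesh for $P$, the set $P$ is a union of parallelepipeds of the form $L_{1,j_1}\times\cdots\times L_{n,j_n}$. Fix arbitrary $s_k\in S_k$ for $k\ne i$, and let $j_k$ be the unique index with $s_k\in L_{k,j_k}$. Then whether the tuple $(s_1,\ldots,s_{i-1},s_i,s_{i+1},\ldots,s_n)$ with $s_i\in L_{i,j}$ lies in $P$ depends only on whether the parallelepiped $L_{1,j_1}\times\cdots\times L_{i,j}\times\cdots\times L_{n,j_n}$ is one of those whose union is $P$; since $s_i$ and $t$ lie in the same block $L_{i,j}$, the two tuples obtained by putting $s_i$ or $t$ in the $i$-th coordinate are in $P$ simultaneously. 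Hence $s\mathrel{\rho_i}t$. Claim~\eqref{item:canonical-decomposition-2} is then immediate: by \eqref{item:canonical-decomposition-1} each block $L_{i,j}$ is contained in a single $\rho_i$-class, so $\rho_i$ has at most $|J_i|$ classes, which is finite. For \eqref{item:canonical-decomposition-3}, note that each $\rho_i$-class, being a union of some of the finitely many blocks $L_{i,j}$ (as follows from \eqref{item:canonical-decomposition-1} together with the fact that the blocks partition $S_i$), is a finite union of members of $\Cl B_i$, hence belongs to $\Cl B_i$.

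For \eqref{item:canonical-decomposition-4}, the inclusion $\supseteq$ in \eqref{eq:canonical-decomposition-0} needs: if $(s_1,\ldots,s_n)\in P$ and $s_k'\mathrel{\rho_k}s_k$ for each $k$, then $(s_1',\ldots,s_n')\in P$. This follows by changing one coordinate at a time and invoking the defining equivalence of each $\rho_k$ (here one uses that $\rho_k$ is defined by an equivalence quantified over all choices of the other coordinates, so the intermediate tuples are legitimate). The inclusion $\subseteq$ is trivial, as every $(s_1,\ldots,s_n)\in P$ lies in its own summand. Disjointness and finiteness follow from \eqref{item:canonical-decomposition-1}--\eqref{item:canonical-decomposition-3}: distinct summands $(s_1/\rho_1)\times\cdots\times(s_n/\rho_n)$ and $(t_1/\rho_1)\times\cdots\times(t_n/\rho_n)$ are either equal or disjoint (two products of equivalence classes meet only if the classes coincide coordinatewise), and there are only finitely many distinct tuples of $\rho_i$-classes by \eqref{item:canonical-decomposition-2}.

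I do not anticipate a serious obstacle here; the only point requiring a little care is making precise, in the proof of \eqref{item:canonical-decomposition-1}, that ``$P$ is a union of mesh parallelepipeds'' means membership of a tuple in $P$ is determined by the tuple of blocks it falls into — this is exactly what a mesh provides — and, in \eqref{item:canonical-decomposition-4}, changing coordinates one at a time rather than all at once so that each application of the definition of $\rho_k$ is valid.
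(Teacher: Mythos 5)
Your proposal is correct and follows essentially the same route as the paper's proof: part (1) by locating a tuple inside a unique grid parallelepiped and using that the blocks partition each $S_i$, parts (2)--(3) as immediate consequences of the refinement, and part (4) by changing one coordinate at a time. The only (minor) difference is that you also spell out disjointness and finiteness of the decomposition, which the paper leaves implicit.
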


\begin{proof}
  (\ref{item:canonical-decomposition-1}) Suppose that $s_k\in S_k$
  ($k\ne i$) are such that
  \begin{equation}
    \label{eq:canonical-decomposition-1}
      (s_1,\ldots,s_{i-1},s,s_{i+1},\ldots,s_n)\in P.
  \end{equation}
  By the definition of mesh for~$P$, there exist $j_k\in J_k$
  ($k=1,\ldots,n$) such that
  \begin{displaymath}
    (s_1,\ldots,s_{i-1},s,s_{i+1},\ldots,s_n)
    \in
    L_{1,j_1}\times\cdots\times L_{n,j_n}\subseteq P.
  \end{displaymath}
  Because \eqref{eq:ppd-partition} is a partition of~$S_i$, it follows
  that $j_i=j$ and so
  \begin{equation}
    \label{eq:canonical-decomposition-2}
      (s_1,\ldots,s_{i-1},t,s_{i+1},\ldots,s_n)\in P.
  \end{equation}
  By symmetry, \eqref{eq:canonical-decomposition-2} also
  implies~\eqref{eq:canonical-decomposition-1}, thereby establishing
  that $s\mathrel{\rho_i}t$.
  
  Properties (\ref{item:canonical-decomposition-2}) and
  (\ref{item:canonical-decomposition-3}) are an immediate consequence
  of~(\ref{item:canonical-decomposition-1}) as the partition of~$S_i$
  determined by~$\rho_i$ is refined by the finite partition
  \eqref{eq:ppd-partition}.

  (\ref{item:canonical-decomposition-4}) The inclusion from left to
  right in~\eqref{eq:canonical-decomposition-0} follows from the
  obvious fact that $(s_1,\ldots,s_n)$ belongs
  to~$(s_1/\rho_1)\times\cdots\times(s_n/\rho_n)$. For the reverse
  inclusion, suppose that $(s_1,\ldots,s_n)\in P$. We claim that, if
  $s_i\mathrel{\rho_i}t_i$ ($i=1,\ldots,n$), then $(t_1,\ldots,t_n)\in
  P$. To prove the claim, we establish by induction on $i$ that
  \begin{equation}
    \label{eq:canonical-decomposition-3}
    (t_1,\ldots,t_i,s_{i+1},\ldots,s_n)\in P.
  \end{equation}
  Indeed, \eqref{eq:canonical-decomposition-3} holds for $i=0$ by
  hypothesis. Assuming that \eqref{eq:canonical-decomposition-3} holds
  for a given $i<n$ then, since $s_{i+1}\mathrel{\rho_{i+1}}t_{i+1}$,
  we may replace $s_{i+1}$ by $t_{i+1}$ to conclude that
  \eqref{eq:canonical-decomposition-3} also holds for $i+1$.
\end{proof}

The following application of Lemma~\ref{l:canonical-decomposition}
plays a key role in the sequel of this section.

\begin{Cor}
  \label{c:oplus-vs-cap}
  Suppose that $(\Cl B_{i,j})_{j\in J}$ is a nonempty family of
  Boolean algebras of subsets of the set~$S_i$ ($i=1,\ldots,n$). Then,
  the following equality holds:
  \begin{equation}
    \label{eq:oplus-vs-cap}
    \bigcap_{j\in J}\Bigl(\bigoplus_{i=1}^n\Cl B_{i,j}\Bigr)
    =\bigoplus_{i=1}^n\Bigl(\bigcap_{j\in J}\Cl B_{i,j}\Bigr).
  \end{equation}
\end{Cor}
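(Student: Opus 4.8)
The plan is to prove the two inclusions separately, the one from right to left being routine monotonicity and the one from left to right being where Lemma~\ref{l:canonical-decomposition} does the real work. For the inclusion $\bigoplus_{i=1}^n\bigl(\bigcap_{j\in J}\Cl B_{i,j}\bigr)\subseteq\bigcap_{j\in J}\bigl(\bigoplus_{i=1}^n\Cl B_{i,j}\bigr)$, I would first observe that the operation $\bigoplus$ is monotone in each argument: if $\Cl C_i\subseteq\Cl D_i$ are Boolean algebras of subsets of~$S_i$, then every parallelepiped built from the $\Cl C_i$ is a parallelepiped built from the $\Cl D_i$, so the Boolean subalgebra generated by the former is contained in the one generated by the latter, i.e.\ $\bigoplus_{i=1}^n\Cl C_i\subseteq\bigoplus_{i=1}^n\Cl D_i$. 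Applying this with $\Cl C_i=\bigcap_{j\in J}\Cl B_{i,j}$ and $\Cl D_i=\Cl B_{i,j}$ for each fixed $j$, and then intersecting over $j\in J$, yields the desired inclusion.

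For the reverse inclusion, I would fix an element $P\in\bigcap_{j\in J}\bigl(\bigoplus_{i=1}^n\Cl B_{i,j}\bigr)$ and consider the equivalence relations $\rho_i$ on~$S_i$ defined from $P$ just before Lemma~\ref{l:canonical-decomposition}. The point to stress is that these $\rho_i$ depend only on the subset $P$ of $S_1\times\cdots\times S_n$, not on any choice of mesh or of ambient Boolean algebras. Since $J$ is nonempty and $P\in\bigoplus_{i=1}^n\Cl B_{i,j}$ for every $j\in J$, for each such $j$ I may choose a $(\Cl B_{i,j})_i$-mesh for~$P$ and apply Lemma~\ref{l:canonical-decomposition}: part~(\ref{item:canonical-decomposition-2}) gives that each $\rho_i$ has finite index, and part~(\ref{item:canonical-decomposition-3}) gives that every $\rho_i$-class belongs to~$\Cl B_{i,j}$. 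As $j\in J$ was arbitrary, each $\rho_i$-class in fact belongs to $\bigcap_{j\in J}\Cl B_{i,j}$.

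It then remains to invoke part~(\ref{item:canonical-decomposition-4}) of the lemma, i.e.\ the decomposition~\eqref{eq:canonical-decomposition-0}, which writes $P$ as the union $\bigcup_{(s_1,\ldots,s_n)\in P}(s_1/\rho_1)\times\cdots\times(s_n/\rho_n)$; by the finiteness of the indices of the $\rho_i$ this is a finite union of parallelepipeds, and by the previous paragraph every side of every such parallelepiped lies in $\bigcap_{j\in J}\Cl B_{i,j}$. Hence each of these parallelepipeds belongs to $\bigoplus_{i=1}^n\bigl(\bigcap_{j\in J}\Cl B_{i,j}\bigr)$, and since that set is a Boolean algebra, hence closed under finite unions, $P$ belongs to it as well. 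This yields $\bigcap_{j\in J}\bigl(\bigoplus_{i=1}^n\Cl B_{i,j}\bigr)\subseteq\bigoplus_{i=1}^n\bigl(\bigcap_{j\in J}\Cl B_{i,j}\bigr)$ and completes the argument. The one delicate point — really the only obstacle — is recognizing that the $\rho_i$ are intrinsic to~$P$, so that the membership conclusion of Lemma~\ref{l:canonical-decomposition}(\ref{item:canonical-decomposition-3}) can be harvested simultaneously over all $j\in J$; once this is seen, the rest is bookkeeping.
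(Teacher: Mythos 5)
Your proof is correct and follows essentially the same route as the paper: both arguments rest on the observation that the relations $\rho_i$ are intrinsic to~$P$, so that Lemma~\ref{l:canonical-decomposition} applied to a $(\Cl B_{i,j})_i$-mesh for each $j$ places every $\rho_i$-class in $\bigcap_{j\in J}\Cl B_{i,j}$, after which the finite decomposition~\eqref{eq:canonical-decomposition-0} finishes the job. The paper merely phrases this by saying the mesh formed by the $\rho_i$-classes is simultaneously a $(\Cl B_{i,j})_i$-mesh for all $j$; your version spells out the same content.
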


\begin{proof}
  The inclusion from right to left in~\eqref{eq:oplus-vs-cap} is
  immediate. For the reverse inclusion, suppose that $P$ is an element
  of the left hand side of~\eqref{eq:oplus-vs-cap}.

  Consider the equivalence relations $\rho_i$ defined above for the
  set~$P$. Let \Cl M be the mesh for~$P$ whose partition of~$S_i$
  consists of the classes of~$\rho_i$. By
  Lemma~\ref{l:canonical-decomposition}, \Cl M is a $(\Cl
  B_{i,j})_i$-mesh for~$P$ for every $j$ and, therefore, it is a
  $(\bigcap_{j\in J}\Cl B_{i,j})_i$-mesh for~$P$, which entails that
  $P$ belongs to the right hand side of~\eqref{eq:oplus-vs-cap}.
\end{proof}

We conclude this subsection with another technical result of a
combinatorial nature that is used later.

\begin{Lemma}
  \label{l:box-in-union-of-boxes}
  Let $P=\bigcup_{r\in R}B_r$ be a union of boxes
  $B_r=L_{1,r}\times\cdots\times L_{n,r}$ such that, for every
  $i\in\{1,\ldots,n\}$ and $r,s\in R$, the sets $L_{i,r}$ and
  $L_{i,s}$ are either disjoint or equal. For each
  $i\in\{1,\ldots,n\}$, let $r_i\in R$ and let $J_i$ be a nonempty
  subset of~$L_{i,r_i}$. If $J_1\times\cdots\times J_n$ is contained
  in $P$, then it is contained in a box $B_r$ for some $r\in R$.
\end{Lemma}

\begin{proof}
  Note that, for $r,s\in R$, the boxes $B_r$ and $B_s$ are either
  equal or disjoint. We proceed by induction on~$n$. The case $n=1$
  being trivial, assume the result holds with $n-1$ in the place of
  $n$. For each $r\in R$, consider the box
  $C_r=L_{1,r}\times\cdots\times L_{n-1,r}$. Then, projecting on the
  first $n-1$ components, we conclude that there is at least one index
  $r\in R$ such that $J_1\times\cdots\times J_{n-1}$ is contained in
  the box $C_r$. There may be more than one index $r\in R$ with that
  property but the box $C=C_r$ containing $J_1\times\cdots\times
  J_{n-1}$ is unique and in fact if a box $C_r$ intersects
  $J_1\times\cdots\times J_{n-1}$ nontrivially, then it must be equal
  to~$C$. It follows that there must exist $r\in R$ such that
  $L_{n,r}=L_{n,r_n}$ and $C_r=C$, so that $J_1\times\cdots\times
  J_n\subseteq B_r$, which achieves the induction step.
\end{proof}

\subsection{An algebraized continuity condition}
\label{sec:continuity}

Let $S$ be an $\Omega$-algebra. Given a Boolean subalgebra \Cl B of
$\Cl P(S)$ and $n\ge0$, let %
$\Cl B_n'=\Cl P_{co}(\Omega_n)\oplus\Cl B^{(n)}$ %
where $\Cl B^{(n)}=\bigoplus_{i=1}^n\Cl B$. %
Consider the following property of~\Cl B, where $E_n=E_n^S$ is the
evaluation mapping of~$S$:
\begin{Cequation}
  \label{eq:star}
  L\in\Cl B \implies
  \forall n\ge0,\ E_n^{-1}(L)\in\Cl B_n'.
\end{Cequation}
This expresses a sort of continuity condition but may be also viewed
algebraically as stating that each mapping $E_n^{-1}$ determines a
homomorphism of Boolean algebras $\Cl B\to\Cl B_n'$.

Note that the trivial Boolean algebra $\{\emptyset,S\}$
satisfies~\eqref{eq:star}.

\begin{Lemma}
  \label{l:max}
  Let \Cl A be a Boolean subalgebra of~$\Cl P(S)$. Then, there is a
  maximum Boolean subalgebra of~\Cl A satisfying~(\ref{eq:star}).
\end{Lemma}

\begin{proof}
  Suppose $\mathfrak B$ is a chain of Boolean subalgebras %
  of~\Cl A each of which satisfies~\eqref{eq:star} and let %
  $\Cl B=\bigcup_{\Cl D\in\mathfrak B}\Cl D$. It is routine to check
  that \Cl B is a Boolean subalgebra of~\Cl A. If $L\in\Cl B$ then
  there exists $\Cl D\in\mathfrak B$ such that $L\in\Cl D$, whence
  $E_n^{-1}(L)\in\Cl D'_n$ because 
  \Cl D satisfies~\eqref{eq:star}. Since
  $\Cl D_n'\subseteq\Cl B_n'$, it follows that $E_n^{-1}(L)\in\Cl B_n'$.
  Hence, \Cl B also satisfies~\eqref{eq:star}. By Zorn's Lemma, we
  conclude that there are maximal Boolean subalgebras of~\Cl A
  satisfying~\eqref{eq:star}.
  
  Next, suppose that $\Cl C$ and $\Cl D$ are Boolean subalgebras %
  of~\Cl A satisfying~\eqref{eq:star} and let $\Cl B$ be the least
  Boolean subalgebra of~\Cl A containing $\Cl C\cup\Cl D$. Each $L\in
  \Cl B$ is a finite union of sets of the form $M\cap N$ such that
  $M\in \Cl C$ and $N\in\Cl D$. For such an intersection $M\cap N$, we
  have
  \begin{displaymath}
    E_n^{-1}(M\cap N)
    =E_n^{-1}(M)\cap E_n^{-1}(N)
    \in \Cl C'_n\vee\Cl D'_n
    \subseteq\Cl B_n',
  \end{displaymath}
  since $\Cl C$ and $\Cl D$ satisfy~\eqref{eq:star}, where $\Cl
  C'_n\vee\Cl D'_n$ denotes the least Boolean subalgebra of~$\Cl
  P_{co}(\Omega_n)\oplus\Cl P(S)^{(n)}$ containing $\Cl C'_n\cup\Cl
  D'_n$. For a finite union of sets satisfying~\eqref{eq:star}, we use
  a similar equality for union, and we conclude that $\Cl B$
  satisfies~\eqref{eq:star}. Hence, there is only one maximal Boolean
  subalgebra satisfying~\eqref{eq:star}, which establishes the lemma.
\end{proof}

Denote by $\Cl B_{\mathrm{max}1}^S$ the maximum Boolean subalgebra
of~$\Cl P_{co}(S)$ satisfying~\eqref{eq:star}.

The following is an immediate application of
Corollary~\ref{c:oplus-vs-cap}.

\begin{Cor}
  \label{c:Bcap}
  If the nonempty family $(\Cl B_i)_{i\in I}$ of Boolean subalgebras
  of $\Cl P(S)$ satisfies~(\ref{eq:star}) then so does $\bigcap_{i\in
    I}\Cl B_i$.
\end{Cor}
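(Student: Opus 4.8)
The plan is to verify directly that $\bigcap_{i\in I}\Cl B_i$ satisfies~\eqref{eq:star}, using Corollary~\ref{c:oplus-vs-cap} to handle the $\oplus$-construction that appears on the right-hand side of the implication. Write $\Cl B=\bigcap_{i\in I}\Cl B_i$ and, for each $i$, write $\Cl B_i'=\Cl P_{co}(\Omega_n)\oplus\Cl B_i^{(n)}$ and $\Cl B'=\Cl P_{co}(\Omega_n)\oplus\Cl B^{(n)}$. Fix $L\in\Cl B$. Since $L$ lies in each $\Cl B_i$ and each $\Cl B_i$ satisfies~\eqref{eq:star}, we have $E_n^{-1}(L)\in\Cl B_i'$ for every $i\in I$; hence $E_n^{-1}(L)\in\bigcap_{i\in I}\Cl B_i'$. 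The point is then to identify this intersection with $\Cl B'$, i.e.\ to show
\begin{displaymath}
  \bigcap_{i\in I}\bigl(\Cl P_{co}(\Omega_n)\oplus\Cl B_i^{(n)}\bigr)
  =\Cl P_{co}(\Omega_n)\oplus\Bigl(\bigcap_{i\in I}\Cl B_i\Bigr)^{(n)}.
\end{displaymath}

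To get this I would apply Corollary~\ref{c:oplus-vs-cap} with $n+1$ in place of $n$, taking as the first family the constant family $\Cl B_{0,i}=\Cl P_{co}(\Omega_n)$ (which is independent of $i$, so its intersection over $i\in I$ is again $\Cl P_{co}(\Omega_n)$), and for the remaining $n$ coordinates the families $\Cl B_{k,i}=\Cl B_i$ as subsets of $T_\Omega(X)$, $k=1,\dots,n$. Then $\bigoplus$ of the $i$-th row is exactly $\Cl P_{co}(\Omega_n)\oplus\Cl B_i^{(n)}=\Cl B_i'$, the intersection of the $k$-th column is $\Cl P_{co}(\Omega_n)$ for $k=0$ and $\bigcap_{i\in I}\Cl B_i=\Cl B$ for $k=1,\dots,n$, and the conclusion of Corollary~\ref{c:oplus-vs-cap} is precisely the displayed equality. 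Note the hypothesis of that corollary (a nonempty family) is met because $I$ is nonempty by assumption. Combining, $E_n^{-1}(L)\in\bigcap_{i\in I}\Cl B_i'=\Cl B'$, which is exactly what~\eqref{eq:star} requires for~$\Cl B$.

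There is essentially no obstacle here: the only thing to be a little careful about is the bookkeeping in matching the indices of Corollary~\ref{c:oplus-vs-cap} (which is stated for $\bigoplus_{i=1}^n$ over a single index set $J$) to the present situation, where one coordinate ranges over $\Omega_n$ and the other $n$ coordinates range over $T_\Omega(X)$, and where the quantifier over $i\in I$ plays the role of the ``$j\in J$'' in the corollary. One should also observe that the tensor-product construction $\bigoplus$ is insensitive to whether the coordinate Boolean algebras sit inside $\Cl P_{co}$ of a Stone space or inside $\Cl P$ of an arbitrary set, so applying the corollary with $\Cl P_{co}(\Omega_n)\subseteq\Cl P(\Omega_n)$ in the first coordinate is legitimate. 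Everything else is a direct substitution, so the proof is short.
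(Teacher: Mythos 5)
Your proof is correct and follows essentially the same route as the paper: take $L$ in the intersection, observe $E_n^{-1}(L)\in\bigcap_{i\in I}\Cl B_i'$, and invoke Corollary~\ref{c:oplus-vs-cap} to identify $\bigcap_{i\in I}\Cl B_i'$ with $\bigl(\bigcap_{i\in I}\Cl B_i\bigr)'$. The paper states this identification in one line where you spell out the $(n+1)$-coordinate bookkeeping with the constant family $\Cl P_{co}(\Omega_n)$ in the first slot, which is exactly the intended application.
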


\begin{proof}
  Let $L$ be an arbitrary element of~$\bigcap_{i\in I}\Cl B_i$. By the
  assumption that each $\Cl B_i$ satisfies~\eqref{eq:star}, we know
  that $E_n^{-1}(L)$ belongs to $(\Cl B_i)'_n$. By
  Corollary~\ref{c:oplus-vs-cap}, we have %
  $\bigcap_{i\in I}(\Cl B_i)'_n=(\bigcap_{i\in I}\Cl B_i)'_n$. Thus,
  $E_n^{-1}(L)$ belongs to %
  $(\bigcap_{i\in I}\Cl B_i)'_n$. Hence, the Boolean algebra
  $\bigcap_{i\in I}\Cl B_i$ satisfies~\eqref{eq:star}.
\end{proof}

The following result gives a special case in which it is easy to
identify the Boolean algebra $\Cl B_{\mathrm{max}1}^S$.

\begin{Prop}
  \label{p:Bmax-in-Stone}
  Let $S$ be a Stone topological $\Omega$-algebra where $\Omega$ is a
  Stone signature. Then the equality $\Cl B_{\mathrm{max}1}^S=\Cl
  P_{co}(S)$ holds.
\end{Prop}

\begin{proof}
  By continuity of~$E_n$, given $K\in\Cl P_{co}(S)$, the subset
  $E_n^{-1}(K)$ of the Stone space $\Omega_n\times S^n$ is clopen and,
  therefore, it is a union of boxes of the form
  $K_0\times K_1\times\cdots\times K_n$ with clopen sides $K_i$. Since
  $E_n^{-1}(K)$ is compact, it is a finite union of such
  boxes, which shows that $\Cl P_{co}(S)$ satisfies
  condition~(\ref{eq:star}).
\end{proof}

The following example shows that the assumption that $\Omega$ is a
Stone signature cannot be dropped in
Proposition~\ref{p:Bmax-in-Stone}.

\begin{eg}
  \label{eg:Bmax-proper}
  Let $S=\mathbb{N}\cup\{\infty\}$ be the one-point
  compactification of the set of all natural numbers, which is viewed
  as a discrete space. Consider the unary signature
  $\Omega=\Omega_1=\{a_i:i\in\mathbb{N}\}$, also viewed as a discrete
  space. We define a structure of $\Omega$-algebra on~$S$ through the
  evaluation mapping $E_1:\Omega\times S\to S$ given by
  \begin{align*}
    E_1(a_i, n) &= \max\{0,n-i\}\quad (n\in\mathbb{N})\\
    E_1(a_i, \infty) &= \infty.
  \end{align*}
   For a finite subset $K$ of~$\mathbb{N}$  (and these together
  with their complements in~$S$ are the clopen subsets of $S$), note
  that
  \begin{displaymath}
    E_1^{-1}(K)=\bigcup_{i\in\mathbb{N}} \{a_i\}\times(K+i)
  \end{displaymath}
  where $K+i=\{m+i:m\in K\}$. Since $K+i\ne K+j$ for $i\ne j$,
  $E_1^{-1}(K)$ cannot be expressed as a finite union of
  boxes. Hence, the Boolean algebra $\Cl P_{co}(S)$ does not
  satisfy condition~\eqref{eq:star}.
\end{eg}

The following result shows how condition~(\ref{eq:star}) is affected
by continuous homomorphisms between topological algebras.

\begin{Prop}
  \label{p:lifting-4.7}
  Let $\varphi:S\to T$ be a homomorphism of $\Omega$-algebras and
  let \Cl B be a Boolean algebra of subsets of~$T$. If \Cl B
  satisfies~(\ref{eq:star}) then so does $\varphi^{-1}(\Cl B)$ and the
  converse also holds if $\varphi$ is onto.
\end{Prop}

\begin{proof}
  We have the following commutative diagram for each $n\ge0$:
  \begin{equation}
    \label{eq:lifting-4.7-1}
    \xymatrix@C=15mm{
      \Omega_n\times S^n
      \ar[r]^(0.6){E_n^S}
      \ar[d]^{\mathrm{id}\times\varphi^n}
      &
      S
      \ar[d]^\varphi
      \\
      \Omega_n\times T^n
      \ar[r]^(0.6){E_n^T}
      &
      T
    }
  \end{equation}

  Suppose first that \Cl B satisfies~\eqref{eq:star} and let $L\in\Cl
  B$. By assumption, there is an expression of the form
  \begin{displaymath}
    (E_n^T)^{-1}(L)=\bigcup_{i=1}^r K_i\times
    L_{i,1}\times\cdots\times L_{i,n}
  \end{displaymath}
  where the $K_i$ are clopen subsets of~$\Omega_n$ and the $L_{i,j}$
  belong to~\Cl B. By the commutativity of
  Diagram~(\ref{eq:lifting-4.7-1}), we get
  \begin{align*}
    (E_n^S)^{-1}(\varphi^{-1}(L))
    &=(\mathrm{id}\times\varphi^n)^{-1}\left((E_n^T)^{-1}(L)\right)\\
    &=\bigcup_{i=1}^r K_i\times
      \varphi^{-1}(L_{i,1})\times\cdots\times \varphi^{-1}(L_{i,n}),
  \end{align*}
  which shows that $(E_n^S)^{-1}(\varphi^{-1}(L))$ belongs to
  $(\varphi^{-1}(\Cl B))'_n$. Hence, $\varphi^{-1}(\Cl B)$
  satisfies~\eqref{eq:star}.

  Conversely, suppose that $\varphi$ is onto and $\varphi^{-1}(\Cl B)$
  satisfies~\eqref{eq:star}, and let $L\in\Cl B$. By assumption, there
  is an expression
  \begin{equation}
    \label{eq:lifting-4.7-2}
    (E_n^S)^{-1}\left(\varphi^{-1}(L)\right)
    =\bigcup_{i=1}^r K_i\times
    \varphi^{-1}(L_{i,1})\times\cdots\times \varphi^{-1}(L_{i,n}),
  \end{equation}
  where the $K_i$ are clopen subsets of~$\Omega_n$ and the $L_{i,j}$
  belong to~\Cl B. Applying $\mathrm{id}\times\varphi^n$ to both sides
  of~(\ref{eq:lifting-4.7-2}), we get
  \begin{align*}
    (E_n^T)^{-1}(L)
    &=(\mathrm{id}\times\varphi^n)(\mathrm{id}\times\varphi^n)^{-1}
      \left((E_n^T)^{-1}(L)\right)\\
    &=(\mathrm{id}\times\varphi^n)
    \left((E_n^S)^{-1}\left(\varphi^{-1}(L)\right)\right)\\
    &=\bigcup_{i=1}^r K_i\times
    L_{i,1}\times\cdots\times L_{i,n}
  \end{align*}
  where the first equality follows from the hypothesis that $\varphi$
  is onto and the second one follows from the commutativity of
  Diagram~(\ref{eq:lifting-4.7-1}). Hence, \Cl B
  satisfies~\eqref{eq:star}.
\end{proof}

\subsection{A relaxed continuity condition}
\label{sec:relax-cont-cond}

While it follows from the results in the previous subsection together
with those in the next couple of subsections that
condition~\eqref{eq:star} adequately characterizes the Boolean
algebras whose dual Stone spaces have a natural structure of Stone
topological algebra in the case of a Stone signature,
Example~\ref{eg:Bmax-proper} shows that \eqref{eq:star} is no longer
adequate for a general topological signature. We proceed to introduce
a relaxed version of~\eqref{eq:star} that provides the desired
characterization.

Let \Cl B be a Boolean subalgebra of~$\Cl P(S)$ and $\Omega$ a
topological signature, where $S$ is an $\Omega$-algebra. Define $\Cl
B_n''$ to be the set of all unions of the form
\begin{equation}
  \label{eq:Bn''-decomposition}
  \bigcup_{r\in R}K_r\times L_{1,r}\times \cdots\times L_{n,r}
\end{equation}
such that the following conditions are satisfied:
\begin{enumerate}[({C2}.1)]
\item\label{item:Bn''-1} the sets $K_r$ form a clopen partition
  of~$\Omega_n$;
\item\label{item:Bn''-2} each $L_{k,r}$ belongs to~\Cl B;
\item\label{item:Bn''-3} for each $r_0\in R$, the set $\{r\in R:
  K_r=K_{r_0}\}$ is finite.
\end{enumerate}
We emphasize that in (C2.\ref{item:Bn''-1}), the function $r\mapsto
K_r$ may not be injective and (C2.\ref{item:Bn''-3}) expresses
precisely the property that each preimage is finite. Note that $\Cl
B_n'\subseteq\Cl B_n''$ while, by (C2.\ref{item:Bn''-1}), equality
holds in case $\Omega$ is a Stone signature. We prove below that in
general $\Cl B_n''$ is also a Boolean subalgebra of $\Cl
P(\Omega_n\times S^n)$.

There are two alternative and conflicting ways in which the
decomposition~\eqref{eq:Bn''-decomposition} may be rewritten which we
present in the next two lemmas.

\begin{Lemma}
  \label{l:Bn''-4}
  Every element of $\Cl B_n''$ admits a decomposition of the
  form~\eqref{eq:Bn''-decomposition} satisfying conditions
  (C2.\ref{item:Bn''-1})--(C2.\ref{item:Bn''-4}), where
  \begin{enumerate}[({C2}.1)]
    \setcounter{enumi}3
  \item\label{item:Bn''-4} for each $r_0\in R$ and each
    $i\in\{1,\ldots,n\}$, the sets of the form $L_{i,r}$ with $r$ such
    that $K_r=K_{r_0}$ are either disjoint or equal.
  \end{enumerate}
\end{Lemma}

\begin{proof}
  It suffices to note that, in view of~(C2.\ref{item:Bn''-3}), there
  are only finitely many boxes of the form considered
  in~(C2.\ref{item:Bn''-4}) and invoke
  Lemma~\ref{l:canonical-decomposition}.
\end{proof}

\begin{Lemma}
  \label{l:Bn''-5}
  The elements of $\Cl B_n''$ are the subsets of~$\Omega_n\times S^n$
  that admit decompositions of the form
  \begin{displaymath}
    \bigcup_{r\in R}K_r\times P_r
  \end{displaymath}
  where $r\to K_r$ is an injective function satisfying
  (C2.\ref{item:Bn''-1}) and each $P_r$ belongs to $\Cl B^{(n)}$.
\end{Lemma}

\begin{proof}
  Consider a decomposition \eqref{eq:Bn''-decomposition} of an element
  $U$ of~$\Cl B_n''$ satisfying
  (C2.\ref{item:Bn''-1})--(C2.\ref{item:Bn''-3}). Let $\sim$ be the
  equivalence relation on $R$ given by $r_1\sim r_2$ if
  $K_{r_1}=K_{r_2}$ and denote by $[r]$ the $\sim$-class
  containing~$r$. Then the decomposition
  \begin{displaymath}
    U=\bigcup_{[r]\in R/{\sim}}
    K_r\times(\bigcup_{s\in[r]}L_{1,s}\times \cdots\times L_{n,s})
  \end{displaymath}
  satisfies the requireed conditions as the inner union is finite by
  (C2.\ref{item:Bn''-3}) and each of its terms belongs to~$\Cl
  B^{(n)}$ by (C2.\ref{item:Bn''-2}). For the converse, it suffices to
  express each $P_r\in\Cl B^{(n)}$ as a finite union of boxes and
  distribute the Cartesian product of~$K_r$ over that union.
\end{proof}

When $A$ is a subset of a set $X$ and it is clear from the context
that it is considered as such, then we may write $A^\complement$
instead of $X\setminus A$.

\begin{Lemma}
  \label{l:Bn''-Boolean}
  If \Cl B is a Boolean subalgebra of~$\Cl P(S)$, then the set $\Cl
  B_n''$ is a Boolean subalgebra of $\Cl P(\Omega_n\times S^n)$.
\end{Lemma}

\begin{proof}
  We need to show that $\Cl B_n''$ is closed under finite union and
  complement. Consider two sets $A_1$ and $A_2$ in~$\Cl B_n''$, say
  with decompositions
  \begin{displaymath}
    A_i=\bigcup_{r\in R_i}K_{i,r}\times P_{i,r}
  \end{displaymath}
  satisfying the conditions from Lemma~\ref{l:Bn''-5}.  
  For the union
  $A_1\cup A_2$ we may find a similar decomposition by first taking
  the partition $\{K_r:r\in R\}$ of~$\Omega_n$ whose classes $K_r$ are
  the nonempty intersections of the form $K_{1,r_1}\cap K_{2,r_2}$
  with $r_1\in R_1$ and $r_2\in R_2$ and then taking the decomposition
  \begin{displaymath}
    \bigcup_{r\in R}
    (K_r\times P_{1,r_1}
    \cup
    K_r\times P_{2,r_2}).
  \end{displaymath}
  This shows that $A_1\cup A_2$ belongs to $\Cl B_n''$.

  For the complement $A_1^\complement$, we get the formula
  \begin{displaymath}
    A_1^\complement=\bigcup_{r\in R_1}K_{1,r}\times P_{1,r}^\complement 
  \end{displaymath}
  which ensures that $A_1^\complement \in \Cl B_n''$ by Lemma~\ref{l:Bn''-5}.
\end{proof}

Let $S$ be an $\Omega$-algebra. Consider the following condition on a
Boolean subalgebra \Cl B of $\Cl P(S)$:
\begin{Cequation}
  \label{eq:star2}
  L\in\Cl B
  \implies
  \forall{ n\ge0},\ (E_n^S)^{-1}(L)\in\Cl B_n''.
\end{Cequation}
Again, condition~\eqref{eq:star2} means that each mapping
$(E_n^S)^{-1}$ defines a Boolean algebra homomorphism $\Cl B\to\Cl
B_n''$.

Without any significant change, several results regarding
condition~\eqref{eq:star} can also be proved for
condition~\eqref{eq:star2}. We leave it to the reader to verify that
the omitted proofs can be straghtforwardly adapted.

\begin{Lemma}
  \label{l:max2}
  Let \Cl A be a Boolean subalgebra of~$\Cl P(S)$. Then, there is a
  maximum Boolean subalgebra of~\Cl A satisfying~(\ref{eq:star2}).
\end{Lemma}

In particular, there is also a maximum Boolean subalgebra of~$\Cl
P_{co}(S)$ satisfying~\eqref{eq:star2}, which we denote $\Cl
B_{\mathrm{max}2}^S$.

The next result is the analog of Proposition~\ref{p:Bmax-in-Stone} but
requires a much longer proof for an arbitrary topological signature.

\begin{Prop}
  \label{p:Bmax2-in-Stone}
  Let $S$ be a Stone topological $\Omega$-algebra where $\Omega$ is an
  arbitrary topological signature. Then the equality $\Cl
  B_{\mathrm{max}2}^S=\Cl P_{co}(S)$ holds.
\end{Prop}

\begin{proof}
  As, by definition, $\Cl B_{\mathrm{max}2}^S$ is contained in~$\Cl
  P_{co}(S)$, it remains to show that the Boolean algebra $\Cl
  P_{co}(S)$ satisfies~\eqref{eq:star2}. Let $K$ be a clopen subset
  of~$S$ and consider the binary relation $\rho$ on~$\Omega_n$ defined
  by
  \begin{displaymath}
    (o,o')\in \rho
    \text{ if }
    \forall s_1,\ldots,s_n\in S\
    \left(
      o_S(s_1,\ldots,s_n)\in K
      \iff
      o'_S(s_1,\ldots,s_n)\in K
    \right).
  \end{displaymath}
  Note that $\rho$ is an equivalence relation on~$\Omega_n$. We claim
  that its classes are open, whence they are clopen. To establish the
  claim, suppose that $(p_i)_{i\in I}$ is a net in the complement
  in~$\Omega_n$ of the class of~$o$ converging to~$p$; we need to show
  that $(o,p)\notin\rho$. Then, for each $i\in I$, there exist
  $s_{1,i},\ldots,s_{n,i}\in S$ such that exactly one of
  $o_S(s_{1,i},\ldots,s_{n,i})$ and $(p_i)_S(s_{1,i},\ldots,s_{n,i})$
  belongs to~$K$. Since we may replace $K$ by $K^\complement$ without
  changing $\rho$, by taking a suitable subnet we may assume that it
  is $(p_i)_S(s_{1,i},\ldots,s_{n,i})$ that belongs to~$K$. Because $S$ is
  compact, we may further assume that the net
  $(s_{1,i},\ldots,s_{n,i})_{i\in I}$ converges to $(s_1,\ldots,s_n)$
  in~$S^n$. As the evaluation mapping $E_n$ is assumed to be
  continuous and $K$ is clopen, it follows that
  $p_S(s_1,\ldots,s_n)\in K$ and $o_S(s_1,\ldots,s_n)\notin K$. Hence,
  the $\rho$-class of~$o$ is open.

  Because $E_n^S$ is continuous, the set $(E_n^S)^{-1}(K)$ is open
  and, therefore it is a union
  \begin{equation}
    \label{eq:Bmax2-in-Stone-1}
    \bigcup_{r\in R}K_r\times L_{1,r}\times\cdots\times L_{n,r}
  \end{equation}
  of boxes with the $K_r$ open subsets of~$\Omega_n$ and the $L_{k,r}$
  clopen in~$S$. If $o\in K_r$ and $(o,p)\in\rho$, then $\{p\}\times
  L_{1,r}\times\cdots\times L_{n,r}\subseteq (E_n^S)^{-1}(K)$ by the
  definition of~$\rho$. Hence, we may assume that each $K_r$ is a
  union of $\rho$-classes. Moreover, we may break up each $K_r$ into
  the $\rho$-classes contained in it, at the cost of splitting terms
  in the decomposition~\eqref{eq:Bmax2-in-Stone-1}. The resulting
  decomposition clearly satisfies
  conditions~(C2.\ref{item:Bn''-1})--(C2.\ref{item:Bn''-2}), it only
  remains to modify it by showing that some terms may be dropped so as
  to satisfy (C2.\ref{item:Bn''-3}). For that purpose, we may first
  note that a similar decomposition
  \begin{equation*}
    \label{eq:Bmax2-in-Stone-2}
    \bigcup_{s\in R'}K'_s\times L'_{1,s}\times\cdots\times L'_{n,s}
  \end{equation*}
  exists for $(E_n^S)^{-1}(K^\complement)$, where again each $K_s'$ is
  a $\rho$-class and the sets $L'_{k,s}$ are clopen in~$S$. For a
  fixed $o\in\Omega_n$, the boxes $L_{1,r}\times\cdots\times L_{n,r}$
  such that $o\in K_r$ together with the boxes
  $L'_{1,s}\times\cdots\times L'_{n,s}$ with $o\in K'_s$ constitute a
  clopen cover of the compact space $S^n$ in which the former are
  disjoint from the latter. Hence, the union
  \begin{displaymath}
    \bigcup_{o\in K_r}K_r\times L_{1,r}\times\cdots\times L_{n,r}
  \end{displaymath}
  coincides with a finite union involving only finitely many of the
  same terms. Applying this kind of reduction
  of~\eqref{eq:Bmax2-in-Stone-1} for a complete set of representatives
  of the $\rho$-classes, we get the desired decomposition
  of~$(E_n^S)^{-1}(K)$ satisfying conditions
  (C2.\ref{item:Bn''-1})--(C2.\ref{item:Bn''-3}), thereby completing the
  proof of the proposition.
\end{proof}

The following result is again an application of
Corollary~\ref{c:oplus-vs-cap}.

\begin{Cor}
  \label{c:Bcap2}
  If the nonempty family $(\Cl B_i)_{i\in I}$ of Boolean subalgebras
  of $\Cl P(S)$ satisfies~(\ref{eq:star2}) then so does $\bigcap_{i\in
    I}\Cl B_i$.
\end{Cor}

\begin{proof}
  Let $L$ be an arbitrary element of~$\Cl B=\bigcap_{i\in I}\Cl B_i$. By the
  assumption that each $\Cl B_i$ satisfies~\eqref{eq:star2}, we know
  that $E_n^{-1}(L)$ belongs to $(\Cl B_i)_n''$. For each $i\in I$,
  there is a decomposition
  \begin{equation}
    \label{eq:Bcap2-star2}
    (E_n^S)^{-1}(L)
    =\bigcup_{r\in R_i}K_{i,r}\times P_{i,r}
  \end{equation}
  given by Lemma~\ref{l:Bn''-5}. Given $o\in\Omega_n$, we conclude
  that there is a unique $r_i\in R_i$ such that
  $(E_n^S)^{-1}(L)\cap\{o\}\times S^n=\{o\}\times P_{i,r_i}$. By
  Corollary~\ref{c:oplus-vs-cap}, it follows that there is $P_o\in\Cl
  B^{(n)}$ such that $(E_n^S)^{-1}(L)\cap\{o\}\times S^n=\{o\}\times
  P_o$, that is, all the sets $P_{i,r_i}$ are equal and belong to~$\Cl
  B^{(n)}$. Thus, the decomposition \eqref{eq:Bcap2-star2} for every
  $i\in I$ already shows that $(E_n^S)^{-1}(L)$ belongs to $\Cl
  B_n''$. Hence, the Boolean algebra $\Cl B$
  satisfies~\eqref{eq:star2}.
\end{proof}

We conclude this subsection with the analog of
Proposition~\ref{p:lifting-4.7}, whose proof can be easily adpated to
handle condition~\eqref{eq:star2}.

\begin{Prop}
  \label{p:lifting-star2}
  Let $\varphi:S\to T$ be a homomorphism of\/ $\Omega$-algebras and
  let \Cl B be a Boolean algebra of subsets of~$T$. If \Cl B
  satisfies~(\ref{eq:star2}) then so does $\varphi^{-1}(\Cl B)$ and
  the converse also holds if $\varphi$ is onto.
\end{Prop}

\subsection{A simplified version of the continuity condition}
\label{sec:simple-cont-cond}

We show that condition~\eqref{eq:star2} may be simplified by 
omitting the partition assumption from condition (C2.\ref{item:Bn''-1}). 

Let \Cl B be a Boolean algebra and $\Omega$ a topological signature.
Define $\Cl B_n'''$ to be the set of all unions of the form
\begin{equation}
  \label{eq:Cn-decomposition}
  \bigcup_{r\in R}K_r\times L_{1,r}\times \cdots\times L_{n,r}
\end{equation}
such that the following conditions are satisfied:
\begin{enumerate}[({C3}.1)]
\item\label{item:Cn-1} each set $K_r$ is clopen;
\item\label{item:Cn-2} each set $L_{k,r}$ belongs to~\Cl B;
\item\label{item:Cn-3} for each $o\in\Omega_n$, the set 
$\{r \in R : o\in K_r\}$ is finite.
\end{enumerate}

Although this definition is simpler than the definition of $\Cl
B_n''$, it has some disadvantages. In particular, it is not clear how
to prove properties like Corollary~\ref{c:Bcap2}. The basic technical
problem is that $\Cl B_n'''$ need not be a Boolean algebra in general.
To see this, one may consider the Boolean algebra $\Cl B=\Cl P
(\mathbb N)$, $n=1$, and $\Omega_1=\mathbb N^*$ the one-point
compactification of $\mathbb N$, where the limit of any unbounded
sequence is the added point $\infty$. Then the relation ``less than''
$R_{<}$
belongs to $\Cl B_n'''$ as $R_{<}=\bigcup_{n\in \mathbb N}
\{n\}\times\{x \in \mathbb N : n < x\}$. However, one can check that
$R_<^\complement$ does not belong to $\Cl B_n'''$. Indeed, assuming
$R_<^\complement=\bigcup_{i\in I} K_i\times L_i$ satisfies
(C3.\ref{item:Cn-1}) and~(C3.\ref{item:Cn-3}), one can see that there
is some $i$ for which $L_i$ is infinite and $\infty \in K_i$, so that
$K_i$ contains some element $n\in\mathbb{N}$ which is less than some
member of~$L_i$, which contradicts the assumption that $K_i\times L_i$
is contained in $R_<^\complement$.
 
Although the set $\Cl B_n'''$ does not coincide with the Boolean
algebra $\Cl B_n''$, they are strongly related. In particular, the set
$\Cl B_n'''$ clearly contains $\Cl B_n''$ and we show that $\Cl B_n''$
is the maximum Boolean subalgebra of $\Cl B_n'''$ in the following
lemma.

\begin{Lemma}
  \label{l:Cn-generated-by-Bn''}
  Let $S$ be an arbitrary set, \Cl B a Boolean subalgebra of~$\Cl
  P(S)$, $n$ an integer, and $\Omega$ a topological signature. If a
  set $K$ in $\Cl B_n'''$ is such that $K^\complement$ also belongs to
  $\Cl B_n'''$, then $K$ belongs to $\Cl B_n''$.
\end{Lemma}

\begin{proof}
  Let $K\in \Cl B_n'''$ be given by
  formula~(\ref{eq:Cn-decomposition}) and suppose that
  $K^\complement\in\Cl B'''_n$. We apply the idea from the proof of
  Proposition~\ref{p:Bmax2-in-Stone}, namely, for this $K\subseteq
  \Omega_n\times S^n$ we consider the binary relation $\rho$
  on~$\Omega_n$ defined by
  \begin{displaymath}
    (o,o')\in \rho
    \text{ if }
    \forall s_1,\ldots,s_n\in S\
    \left(
      (o,s_1,\ldots,s_n)\in K
      \iff
      (o',s_1,\ldots,s_n)\in K
    \right).
  \end{displaymath}
  We see that $\rho$ is an equivalence relation and that we get the same relation, 
  if we replace the set $K$ by $K^\complement$ in the previous 
  definition.
  We claim that $\rho$ is a clopen equivalence on the set $\Omega_n$. 
  The proof of this claim occupies the following few paragraphs.
    
  For an arbitrary $o\in \Omega_n$, we consider $[o]_\rho=\{ p\in
  \Omega_n : (o,p)\in \rho\}$, the equivalence class containing $o$.
  To show that $[o]_\rho$ is open, let $(p_i)_{i\in I}$ be a net in
  $[o]_\rho^\complement$ converging to $p\in\Omega_n$. We improve the
  expression~(\ref{eq:Cn-decomposition}) for the considered set $K$;
  more precisely, we rewrite the part using indices from the subset
  $R'=\{r \in R : o\in K_r\ \vee\ p\in K_r\}$. Since the set $R'$ is
  finite, the subset $\{ L_{i,r} : i\in\{1,\dots ,n\},\ r\in R'\}$ of
  the Boolean algebra $\Cl B$ is finite as well. Thus there exists a
  finite partition $S=\biguplus_{j\in J}M_{j}$, indexed by a finite
  set of integers $J=\{1, \dots ,k\}$ such that each $M_j$ belongs
  to~\Cl B and, for every $r\in R'$ and $i=1,\dots ,n$, there exist an
  integer $m(i,r)$ and a sequence of indices $j_1, \dots ,j_{m(i,r)}$
  from $J$ such that $L_{i,r}=M_{j_1} \cup \dots \cup M_{j_{m(i,r)}}$.
  We replace each such $L_{i,r}$ in the considered
  expresion~(\ref{eq:Cn-decomposition}) for $K$ by this finite union
  $M_{j_1} \cup \dots \cup M_{j_{m(i,r)}}$ and distribute the product
  $K_r\times L_{1,r}\times \cdots\times L_{n,r}$ over each such union.
  In this way, the finite part $\bigcup_{r\in R'}K_r\times
  L_{1,r}\times \cdots\times L_{n,r}$ of the
  expression~(\ref{eq:Cn-decomposition}) is replaced by $\bigcup_{r\in
    R''} K_r\times P_r$, where $R''$ is a new finite index set, $K_r$
  are the clopen sets used in the original expression but indexed by
  the new index set $R''$, and every $P_r$ is a basic box of the form
  $P_{1,r}\times \dots \times P_{n,r}$ where all $P_{i,r}\in\Cl B$
  belong to the considered partition $\{M_{j}\}_{j\in J}$. In other
  words, if we define, for each $n$-tuple
  $\alpha=(\alpha_1,\ldots,\alpha_n)\in J^n$, the basic box $P_\alpha$
  by the formula $P_\alpha=M_{\alpha_1}\times \cdots \times
  M_{\alpha_n}$, then $P_r=P_\alpha$ for some $\alpha \in J^n$.
  
  The last step in our improvement is that for a fixed basic box
  $P_\alpha$, we put together all $K_r$'s such that $P_r=P_\alpha$.
  Formally, we put $K_\alpha = \bigcup_{r\in R'', P_r=P_\alpha} K_r$
  whenever the index set $\{r\in R'': P_r=P_\alpha\}$ is non-empty,
  and we put $K_\alpha=\emptyset$ otherwise. Then we replace the
  existing finite subparts of the considered expression of the form
  \begin{displaymath}
    \bigcup_{r\in R'', P_r=P_\alpha} K_r\times P_r 
    \quad \text{ by }\quad K_\alpha \times P_\alpha.
  \end{displaymath}
  Additionally, we add the empty sets $K_\alpha \times P_\alpha$ for
  those $\alpha$ for which the index set $\{r\in R'': P_r=P_\alpha\}$
  is empty. Altogether, we obtain the expression
  \begin{displaymath}
    K =
    \bigcup_{r\in R\setminus R'}
    K_r\times  L_{1,r}\times \cdots\times L_{n,r} 
    \cup
    \bigcup_{\alpha \in J^n}
    K_{\alpha} \times P_\alpha
  \end{displaymath}
  with the
  following properties:
  \begin{enumerate}[(i)]
  \item\label{item:Cn-generated-by-Bn''-1} $o,p\not\in K_r$ for all
    $r\in R\setminus R'$,
  \item\label{item:Cn-generated-by-Bn''-2} $P_\alpha \cap
    P_{\alpha'}\not=\emptyset$ if and only if $\alpha=\alpha'$, and
  \item\label{item:Cn-generated-by-Bn''-3} every $K_r$ (for $r\in
    R\setminus R'$) and $K_{\alpha}$ (for $\alpha\in J^n$) is clopen
    in $\Omega_n$.
  \end{enumerate}
  Notice that the same construction can be made for $K^\complement$
  independently.
  
  Now, recall that we assume $(p_i,o)\not\in \rho$ for every $i\in I$.
  For each $i\in I$, there is an $n$-tuple
  $(s_{1,i},\ldots,s_{n,i})\in S^n$ such that
  \begin{displaymath}
    \left|\,\{(o,s_{1,i},\ldots ,s_{n,i}), (p_i,s_{1,i},\ldots
      ,s_{n,i})\}\cap K\,\right| =1 .
  \end{displaymath}
  Whence, it is possible to choose a converging subnet of
  $(p_{i_\lambda})_{\lambda\in\Lambda}$ of the net $(p_i)_{i\in
    I}$
  such that, choosing $K'$ equal to $K$ or $K^\complement$, we have:
  \begin{displaymath}
    \forall \lambda\in\Lambda:  \quad
    \left(\,
      (o,s_{1,i_\lambda},\ldots ,s_{n,i_\lambda})\in K'
      \ \wedge\ 
      (p_{i_\lambda},s_{1,i_\lambda},\ldots ,s_{n,i_\lambda})\not\in
      K'\,
    \right).
  \end{displaymath}
  Without loss of generality, we may assume that $K'=K$. By the above
  property~(\ref{item:Cn-generated-by-Bn''-1}), the fact
  $(o,s_{1,i_\lambda},\dots ,s_{n,i_\lambda})\in K$ means that
  $(o,s_{1,i_\lambda},\dots ,s_{n,i_\lambda})\in K_\alpha\times
  P_\alpha$ for some $\alpha\in J^n$. Since $J^n$ is finite, by taking
  an appropriate subnet of the net $(p_{i_\lambda})_{\lambda\in\Lambda}$
  we may further assume that there is $\alpha\in J^n$ such that
  $(o,s_{1,i_\lambda},\dots ,s_{n,i_\lambda})\in K_\alpha\times P_\alpha$
  for all $\lambda\in\Lambda$. We deduce that $o\in K_\alpha$ and
  $p_{i_\lambda}\not\in K_\alpha$. Since the subset $K_\alpha$ is
  clopen by~(\ref{item:Cn-generated-by-Bn''-3}), and since $p$ is a
  limit of the net $(p_i)_{i\in I}$ and consequently also the limit of
  the considered subnet $(p_{i_\lambda})_{\lambda\in\Lambda}$, we get
  that $p\not\in K_\alpha$. We choose some element
  $(s_1,\ldots,s_n)\in P_\alpha$, which gives $(o,s_1,\ldots,s_n)\in
  K$. Recall that $(s_1,\ldots,s_n)\not\in P_{\alpha'}$ for
  $\alpha'\not=\alpha$ by~(\ref{item:Cn-generated-by-Bn''-2}).
  Finally, as $p$ does not belong to $K_r$ for $r\in R\setminus R'$,
  we deduce that $(p, s_1,\dots , s_n)\not\in K$. This leads to the
  conclusion $(o,p)\not\in\rho$. This completes the proof of the
  claim that each $\rho$-class is open; therefore, it is also clopen.
  
  Now we choose a set of representatives of all $\rho$-clasess, that
  is, a set $O\subseteq \Omega_n$ such that for every $o\in\Omega_n$
  there is a unique $o'\in O$ for which $(o',o)\in \rho$. For each
  $o\in O$, we consider the set $P_o \subseteq S^n$ determined by the
  property $K \cap (\{ o \}\times S^n) =\{o\}\times P_o$. By the
  condition~(C3.\ref{item:Cn-3}), we know that $P_o$ is a finite union
  of boxes with sides in $\mathcal B$, including the case when $P_o$
  is empty. Finally, we obtain the equality $K=\bigcup_{o\in O}
  [o]_\rho \times P_o$, which shows that $K\in \Cl B''_n$.
\end{proof}

We are prepared to give an alternative condition for~\eqref{eq:star2}.
Let $S$ be an $\Omega$-algebra. Consider the following condition on a
Boolean subalgebra \Cl B of $\Cl P(S)$:
\begin{Cequation}
  \label{eq:star3}
  L\in\Cl B \implies
  \forall{n\ge0},\ (E_n^S)^{-1}(L)\in\Cl B_n'''.
\end{Cequation}
The new condition is equivalent to condition~(\ref{eq:star2}), as we
show in the following statement.

\begin{Prop}
  \label{p:equvalence-c2-c3}
  Let $S$ be an $\Omega$-algebra and $\Cl B$ be a Boolean subalgebra
  of $\Cl P(S)$. Then $\Cl B$ satisfies~(\ref{eq:star2}) if and only
  if it satisfies~(\ref{eq:star3}).
\end{Prop}

\begin{proof} 
  If we consider the sets $\Cl B''_n$ and $\Cl B_n'''$ for a given
  Boolean subalgebra $\Cl B$ and an integer $n$, then we have $\Cl
  B''_n \subseteq \Cl B_n'''$ which yields that
  property~(\ref{eq:star2}) implies property~(\ref{eq:star3}).
  
  For the reverse implication, let $L\in\Cl B$ be arbitrary. If we
  denote $K=(E_n^S)^{-1}(L)\in\Cl B_n'''$, then the set
  $K^\complement=(E_n^S)^{-1}(L^\complement)$ also belongs to $\Cl
  B_n'''$ because $L^\complement\in \Cl B$. Hence we get $K\in\Cl
  B''_n$ by Lemma~\ref{l:Cn-generated-by-Bn''}.
\end{proof}

\section{Duality}
\label{sec:duality}

For a Boolean algebra \Cl B, denote $\Cl B^\star$ the Stone dual space
of~\Cl B. Recall that $\Cl B^\star$ may be viewed as the set of all
ultrafilters of~\Cl B; a basis of the topology is given by the sets
$\Cl U_L$ ($L\in\Cl B$), where $\Cl U_L$ consists of all ultrafilters
containing $L$.

For a set $S$, let $\Cl P(S)$ be the Boolean algebra of all subsets
of~$S$. If \Cl B is a Boolean subalgebra of $\Cl P(S)$, then we let
$\iota:S\to\Cl B^\star$ be defined by
\begin{displaymath}
  \iota(s)
  =s{\uparrow}
  =\{L\in\Cl B: s\in L\}.
\end{displaymath}
In case $S$ is a topological space, we let $\Cl P_{co}(S)$
denote the Boolean algebra of all clopen subsets of~$S$.

\begin{Prop}
  \label{p:basic-facts-on-Stone-dual}
  Let \Cl B be a boolean algebra of subsets of a set~$S$. Then the
  following properties hold for an arbitrary element $L$ of~\Cl B:
  \begin{enumerate}
  \item\label{item:basic-facts-on-Stone-dual-1}
    $\overline{\iota(L)}=\Cl U_L$;
  \item\label{item:basic-facts-on-Stone-dual-2}
    $\iota^{-1}\Bigl(\overline{\iota(L)}\Bigr)=L$.
  \end{enumerate}
  Moreover, the mapping $\varphi:L\mapsto\overline{\iota(L)}$ is an
  isomorphism of \Cl B with~$\Cl P_{co}(\Cl B^\star)$.
\end{Prop}

\begin{proof}
  (\ref{item:basic-facts-on-Stone-dual-1}) The closure
  $\overline{\iota(L)}$ consists of all ultrafilters $u\in\Cl B^\star$
  such that, for all $K\in\Cl B$, $u\in \Cl U_K$ implies $\Cl
  U_K\cap\iota(L)\ne\emptyset$. The former condition means that $K\in
  u$ while the latter means that there is $s\in L$ such that
  $s{\uparrow}\in\Cl U_K$, that is, $s\in K$, and thus it holds if and
  only if $K\cap L\ne\emptyset$. Since $u$~is an ultrafilter, $L$
  having nonempty intersection with all $K\in u$ is equivalent to
  $L\in u$, that is $u\in\Cl U_L$.

  (\ref{item:basic-facts-on-Stone-dual-2}) In view
  of~(\ref{item:basic-facts-on-Stone-dual-1}), we need to show that
  $\iota^{-1}(\Cl U_L)=L$. Indeed, $s\in\iota^{-1}(\Cl U_L)$ holds if
  and only if $s{\uparrow}\in\Cl U_L$, that is, $s\in L$.

  By~(\ref{item:basic-facts-on-Stone-dual-2}), the mapping $\varphi$
  is injective. Since, for $K,L\in\Cl B$, we have $\Cl U_K\cup\Cl
  U_L=\Cl U_{K\cup L}$ and $\Cl U_K^\complement=\Cl
  U_{K^\complement}$, it follows from
  (\ref{item:basic-facts-on-Stone-dual-1}) that $\varphi$~is a
  homomorphism of Boolean algebras. It remains to show that every
  clopen subset $C$ of~$\Cl B^\star$ belongs to the image
  of~$\varphi$. Now, as $C$ is open, it is a union of basic open sets
  $\Cl U_L$ with $L\in\Cl B$; as $C$ is compact, it is a finite union
  of such sets. Since $\bigcup_{i=1}^n\Cl U_{L_i}=\Cl
  U_{\bigcup_{i=1}^nL_i}$, we conclude that $C=\Cl U_L$ for some
  $L\in\Cl B$, as desired.
\end{proof}

\subsection{From Boolean algebras to Stone topological algebras}
\label{sec:Boole-to-Stone} \ 

The next result shows how to obtain Stone topological algebras from
the Stone dual of a Boolean algebra of subsets of~$S$. The algebraic
structure, for which suitable properties are stated and proved below,
is given by the following definition.

Let $S$ be a $\Omega$-algebra and let \Cl B be a Boolean subalgebra
of~$\Cl P(S)$. For $o\in\Omega_n$ and $u_1,\ldots,u_n\in\Cl B^\star$,
let
\begin{align}
  \label{eq:def-o-Bstar}
  \begin{split}
    \lefteqn{o_{\Cl B^\star}(u_1,\ldots,u_n)}\\
    &=\{L\in\Cl B: \forall i\in\{1,,\ldots,n\}\,\exists L_i\in u_i:\ 
      o_S(L_1\ldots,L_n)\subseteq L\}.
  \end{split}
\end{align}

\begin{Thm}
  \label{t:Boole-to-Stone}
  Let $\Omega$ be a topological signature, $S$ an $\Omega$-algebra,
  and \Cl B a Boolean subalgebra of~$\Cl P(S)$
  satisfying~(\ref{eq:star2}). Then, (\ref{eq:def-o-Bstar}) defines a
  structure of $\Omega$-algebra on $\Cl B^\star$ which makes it a
  Stone topological algebra. The mapping $\iota_{\Cl B}:S\to\Cl
  B^\star$ defined by sending each $s\in S$ to $s{\uparrow}=\{L\in\Cl
  B:s\in L\}$ is a homomorphism with dense image such that $\Cl
  B=\iota_{\Cl B}^{-1}(\Cl P_{co}(\Cl B^\star))$. Moreover, in case
  $S$ is a topological $\Omega$-algebra, the mapping $\iota_{\Cl B}$
  is continuous if and only if $\Cl B$ is contained in~$\Cl
  P_{co}(S)$.
\end{Thm}

\begin{proof}
  Given $o\in\Omega_n$ and $u_1,\ldots,u_n\in\Cl B^\star$, let
  $u=o_{\Cl B^\star}(u_1,\ldots,u_n)$ be the set defined
  by~(\ref{eq:def-o-Bstar}). We claim that $u$ is an ultrafilter
  of~\Cl B. It is immediate to check that it is a proper filter of~\Cl
  B. To show that it is an ultrafilter, we must show that, given
  $L\in\Cl B$, either $L$ or its complement $L^\complement$ belongs
  to~$u$. By the assumption that \Cl B satisfies~\eqref{eq:star2}, we
  know that both $o_S^{-1}(L)$ and $o_S^{-1}(L^\complement)$ belong to
  $\Cl B^{(n)}$: for instance, $o_S^{-1}(L)$ is the projection on the
  last $n$ components of $(E_n^S)^{-1}(L)\cap \{o\}\times S^n$. It
  follows that there are $n$ finite partitions
  $S=\biguplus_{j=1}^{k_i}L_{i,j}$ into elements of~\Cl B
  ($i=1,\ldots,n$) such that each box $L_{1,j_1}\times\cdots\times
  L_{n,j_n}$ is entirely contained in either $o_S^{-1}(L)$ or
  $o_S^{-1}(L^\complement)$. Since each $u_i$ is an ultrafilter of~\Cl
  B, there is a unique $\ell_i\in\{1,\ldots,k_i\}$ such that
  $L_{i,\ell_i}\in u_i$. For this choice of $\ell_i$, since the
  product $L_{1,\ell_1}\times\cdots\times L_{n,\ell_n}$ is contained
  in either $o_S^{-1}(L)$ or $o_S^{-1}(L^\complement)$, we conclude
  respectively that $L\in u$ or $L^\complement\in u$.
  
  Next, we claim that each evaluation mapping $E_n^{\Cl B^\star}$ is
  continuous. Let $L\in\Cl B$. By Lemma~\ref{l:Bn''-4}, there is a
  decomposition
  \begin{equation}
    \label{eq:Boole-to-Stone-1}
    (E_n^S)^{-1}(L)=\bigcup_{r\in R}K_r\times L_{1,r}\times
    \cdots\times L_{n,r}
  \end{equation}
  satisfying
  conditions~(C2.\ref{item:Bn''-1})--(C2.\ref{item:Bn''-4}). The claim
  follows from the formula
  \begin{equation}
    \label{eq:Boole-to-Stone-2}
    (E_n^{\Cl B^\star})^{-1}(\Cl U_L)
    =\bigcup_{r\in R}K_r\times\Cl U_{L_{1,r}}\times\cdots\times\Cl
    U_{L_{n,r}}
  \end{equation}
  which we proceed to establish. For the inclusion from right to left,
  suppose that $(o,u_1,\ldots,u_n)$ is such that there is $r\in R$
  with $o\in K_r$ and $L_{i,r}\in u_i$ ($i=1,\ldots,n$).
  From~(\ref{eq:Boole-to-Stone-1}), it follows that
  $o_S(L_{1,r},\ldots,L_{n,r})\subseteq L$, so that $L\in o_{\Cl
    B^\star}(u_1,\ldots,u_n)$, that is, $(o,u_1,\ldots,u_n)$ belongs
  to the left side of~(\ref{eq:Boole-to-Stone-2}).

  For the inclusion from left to right in~\eqref{eq:Boole-to-Stone-2},
  suppose that $(o,u_1,\ldots,u_n)$ belongs to $(E_n^{\Cl
    B^\star})^{-1}(\Cl U_L)$. Then, for each $i\in\{1,\ldots,n\}$,
  there exists $J_i\in u_i$ such that $o_S(J_1,\ldots,J_n)\subseteq
  L$, which means that $\{o\}\times J_1\times\cdots\times J_n\subseteq
  (E_n^S)^{-1}(L)$. Let $R_o=\{r\in R:o\in K_r\}$ and note that
  conditions~(C2.\ref{item:Bn''-1}) and (C2.\ref{item:Bn''-3})
  together imply that $R_o$ is a finite set. By
  (\ref{eq:Boole-to-Stone-1}), we deduce that
  $J_i\subseteq\bigcup_{r\in R_o} L_{i,r}$, so that this finite union
  also belongs to $u_i$; hence, there is $r_i$ such that $L_{i,r_i}\in
  u_i$ so that $J_i\cap L_{i,r_i}\in u_i$ and we may as well assume
  that $J_i$ is contained in some $L_{i,r_i}$. By
  Lemma~\ref{l:box-in-union-of-boxes}, it follows that there exists
  $r\in R$ such that $\{o\}\times J_1\times\cdots\times J_n\subseteq
  K_r\times L_{1,r}\times\cdots\times L_{n,r}$, thereby showing that
  $(o,u_1,\ldots,u_n)$ belongs to the right side
  of~(\ref{eq:Boole-to-Stone-2}).

  Next, we show that the correspondence $\iota_{\Cl B}:s\mapsto
  s{\uparrow}$ is a homomorphism $S\to\Cl B^\star$. Indeed, given
  $s_1,\ldots,s_n\in S$ and $o\in\Omega_n$, the ultrafilter $o_{\Cl
    B^\star}(s_1{\uparrow},\ldots,s_n{\uparrow})$ consists of all
  $L\in\Cl B$ such that, for each $i$, there exists $L_i\in
  s_i{\uparrow}$ such that $o_S(L_1,\ldots,L_n)\subseteq L$, so that,
  in particular, we have $o_S(s_1,\ldots,s_n)\in L$, that is, $L$
  belongs to $o_S(s_1,\ldots,s_n){\uparrow}$. Conversely, if
  $o_S(s_1,\ldots,s_n)\in L$, that is, $(o,s_1,\ldots,s_n)\in
  (E_n^S)^{-1}(L)$ then, since \Cl B is assumed to satisfy
  condition~(\ref{eq:star2}), there are $L_i\in\Cl B$ such that
  $s_i\in L_i$ ($i=1,\ldots,n$) and $\{o\}\times L_1\times\cdots\times
  L_n\subseteq (E_n^S)^{-1}(L)$, which implies that $L$ belongs to
  $o_{\Cl B^\star}(s_1{\uparrow},\ldots,s_n{\uparrow})$. We conclude
  that $o_{\Cl B^\star}(s_1{\uparrow},\ldots,s_n{\uparrow})
  =o_S(s_1,\ldots,s_n){\uparrow}$, thereby showing that $\iota_{\Cl
    B}$ is a homomorphism. Given a nonempty basic open set $\Cl U_L$,
  with $L\in\Cl B\setminus\{\emptyset\}$, we have $s{\uparrow}\in\Cl
  U_L$ for every $s\in L$, which shows that the image of~$\iota_{\Cl
    B}$ is dense in~$\Cl B^\star$.

  The equality $\iota_{\Cl B}^{-1}(\Cl P_{co}(\Cl B^\star))=\Cl B$
  follows from Proposition~\ref{p:basic-facts-on-Stone-dual}.

  It remains to deal with the continuity of~$\iota_{\Cl B}$. By
  Proposition~\ref{p:basic-facts-on-Stone-dual}, for every $L\in\Cl
  B$, we have $\iota_{\Cl B}^{-1}(\Cl U_L)=L$. Hence, $\iota_{\Cl B}$
  is continuous if and only if $\Cl B$ consists of open subsets
  of~$S$. Since $\Cl B$ is closed under complementation, we deduce
  that $\iota_{\Cl B}$ is continuous if and only if $\Cl B$ is
  contained in~$\Cl P_{co}(S)$.
\end{proof}

The next result shows how inclusion of Boolean subalgebras of~$\Cl
P(S)$ reflects on the corresponding Stone topological algebras.

\begin{Thm}
  \label{t:dual-of-inclusion}
  Let $\Omega$ be a topological signature, $S$ an $\Omega$-algebra,
  and \Cl B and \Cl C be Boolean subalgebras of~$\Cl P(S)$ satisfying
  condition~(\ref{eq:star2}) such that $\Cl B\subseteq\Cl C$. Consider
  on the dual spaces $\Cl B^\star$ and $\Cl C^\star$ the structure of
  Stone topological algebras given by Theorem~\ref{t:Boole-to-Stone}.
  Then the dual (surjective continuous) mapping $\xi^\star:\Cl
  C^\star\to\Cl B^\star$ of the inclusion $\xi:\Cl
  B\hookrightarrow\Cl C$ is a homomorphism of $\Omega$-algebras such
  that $\xi^\star\circ\iota_{\Cl C}=\iota_{\Cl B}$.
\end{Thm}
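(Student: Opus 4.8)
The plan is to use the explicit description of the dual space elements as ultrafilters and of the dual map $\iota^\star$ as the restriction map, together with the formula~\eqref{eq:duality-3} identifying the values of the canonical homomorphism, to check directly that $\iota^\star$ commutes with each evaluation mapping $E_n$. Recall that since $\iota:\Cl B\hookrightarrow\Cl C$ is the inclusion, its dual is $\iota^\star:\Cl C^\star\to\Cl B^\star$ sending an ultrafilter $V$ of~\Cl C to $V\cap\Cl B$, which is an ultrafilter of~\Cl B. Fix $w\in\Omega_n$ and ultrafilters $V_1,\dots,V_n\in\Cl C^\star$. I must show that
$$
w^{\Cl B^\star}\!\bigl(\iota^\star(V_1),\dots,\iota^\star(V_n)\bigr)
=\iota^\star\bigl(w^{\Cl C^\star}(V_1,\dots,V_n)\bigr),
$$
where on the left the operation is computed using Boolean algebra~\Cl B and on the right using~\Cl C.

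First I would establish the easy inclusion. Let $U_i=V_i\cap\Cl B$. If $L\in\Cl B$ lies in the right-hand side, then $L\in w^{\Cl C^\star}(V_1,\dots,V_n)$, so there are $L_i\in V_i$ (in~\Cl C) with $w(L_1,\dots,L_n)\subseteq L$; but here $L\in\Cl B$, and since $\Cl C$ satisfies~\eqref{eq:star} — wait, we only know \Cl B and \Cl C each satisfy~\eqref{eq:star}; but what we actually need for the reverse direction is to massage the $L_i\in\Cl C$ into members of~\Cl B. The cleaner route is to use~\eqref{eq:star} \emph{for~\Cl B} on the element $L$: since $L\in\Cl B$, the preimage $w^{-1}(L)$ is a finite union of parallelepipeds $K_1\times\cdots\times K_n$ with $K_i\in\Cl B$ (projecting off the $\Omega_n$-coordinate of $(E_n^{T_\Omega(X)})^{-1}(L)$, exactly as in the proof of Theorem~\ref{t:Boole-to-Stone}). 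One of these parallelepipeds contains $(L_1,\dots,L_n)$ in the appropriate sense — more precisely, since each $V_i$ is an ultrafilter of~\Cl C refining the finite \Cl B-partition of $T_\Omega(X)$ obtained from the $K_i$'s (using~\eqref{eq:star} for~\Cl B again to get genuine partitions), there is a unique block $K_i^{(0)}\in\Cl B$ with $K_i^{(0)}\in V_i$, hence $K_i^{(0)}\in U_i$, and the product $K_1^{(0)}\times\cdots\times K_n^{(0)}$ is contained in one parallelepiped inside $w^{-1}(L)$ (otherwise one of the $L_i$ would be disjoint from $K_i^{(0)}$, contradicting both being in the ultrafilter $V_i$). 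Thus $w(K_1^{(0)},\dots,K_n^{(0)})\subseteq L$ with all $K_i^{(0)}\in U_i$, so $L\in w^{\Cl B^\star}(U_1,\dots,U_n)$, giving the inclusion $\iota^\star(w^{\Cl C^\star}(\vec V))\subseteq w^{\Cl B^\star}(\iota^\star(\vec V))$. Conversely, if $L\in\Cl B$ lies in $w^{\Cl B^\star}(U_1,\dots,U_n)$, there are $L_i\in U_i\subseteq V_i$ with $w(L_1,\dots,L_n)\subseteq L$, so immediately $L\in w^{\Cl C^\star}(V_1,\dots,V_n)$ and $L\in\Cl B$, i.e.\ $L\in\iota^\star(w^{\Cl C^\star}(\vec V))$. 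Combining the two inclusions finishes the verification that $\iota^\star$ is a homomorphism of $\Omega$-algebras; continuity and surjectivity of $\iota^\star$ are the standard properties of Stone duality and are already asserted in the statement.

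The main obstacle I anticipate is purely bookkeeping: one must be careful that the finite \Cl B-partitions of $T_\Omega(X)$ witnessing~\eqref{eq:star} for the chosen $L$ are \emph{partitions} (so that each ultrafilter $V_i$ picks out a unique block, and these blocks already lie in~\Cl B not merely in~\Cl C), and that the parallelepiped decomposition of $w^{-1}(L)$ interacts correctly with the $L_i$ coming from the definition of $w^{\Cl C^\star}$. This is exactly the same ``refine to a mesh, then use ultrafilter-completeness on each coordinate'' manoeuvre already carried out in the proof of Theorem~\ref{t:Boole-to-Stone}, so no new idea is needed; the only delicate point is to perform it with~\Cl B (the smaller algebra) rather than~\Cl C, which is legitimate precisely because~\Cl B satisfies~\eqref{eq:star} and $L\in\Cl B$. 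I would also remark that once the identity is checked on a generating set of basic clopens $\Cl U_L$ ($L\in\Cl B$) it holds everywhere, but in fact the argument above checks it directly at the level of ultrafilters, so that remark is optional.
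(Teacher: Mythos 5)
Your proof is correct, and your easy inclusion (if $L\in w^{\Cl B^\star}(U_1,\dots,U_n)$ with $U_i=V_i\cap\Cl B$, then the witnesses $L_i\in U_i\subseteq V_i$ immediately give $L\in w^{\Cl C^\star}(V_1,\dots,V_n)\cap\Cl B$) is exactly the inclusion computed in the paper. Where you diverge is in the reverse inclusion: you prove it directly by invoking~\eqref{eq:star} for~$\Cl B$ on the set $L$, refining $w^{-1}(L)$ into a finite $\Cl B$-mesh, letting each ultrafilter $V_i$ select its unique block $K_i^{(0)}\in\Cl B$, and checking by a nonempty-intersection argument that $K_1^{(0)}\times\cdots\times K_n^{(0)}\subseteq w^{-1}(L)$. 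That argument is sound (the parenthetical ``otherwise one of the $L_i$ would be disjoint from $K_i^{(0)}$'' is exactly the right reason: the block parallelepiped would otherwise lie in $w^{-1}(L^c)$, forcing $L_i\cap K_i^{(0)}=\emptyset$ for some $i$, which contradicts $L_i,K_i^{(0)}\in V_i$), but it is a laborious rerun of the mesh-plus-ultrafilter manoeuvre already carried out in the proof of Theorem~\ref{t:Boole-to-Stone}. The paper dispenses with it in one line: both $w(U_1\cap\Cl B,\dots,U_n\cap\Cl B)$ and $w(U_1,\dots,U_n)\cap\Cl B$ are already known to be \emph{ultrafilters} of~$\Cl B$ --- the former by the first part of Theorem~\ref{t:Boole-to-Stone} applied to~$\Cl B$, the latter because the trace of an ultrafilter of~$\Cl C$ on the subalgebra~$\Cl B$ is an ultrafilter of~$\Cl B$ --- so the single easy inclusion forces equality by maximality. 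Your route costs more work but needs no new idea; the maximality shortcut is the standard way to upgrade one inclusion between ultrafilters to an equality and is worth keeping in mind.
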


\begin{proof}
  We start by noting that, for an ultrafilter $u\in\Cl C^\star$, we
  have the equality $\xi^\star(u)=u\cap\Cl B$. To prove that
  $\xi^\star$ is a homomorphism of $\Omega$-algebras, consider an
  operation symbol $o\in\Omega_n$ and ultrafilters
  $u_1,\ldots,u_n\in\Cl C^\star$. From the definition of the
  interpretation of $o$ in the dual spaces $\Cl B^\star$ and $\Cl
  C^\star$, we see that
  \begin{align*}
    o_{\Cl B^\star}(u_1\cap\Cl B,\ldots,u_n\cap\Cl B)
    &=\{L\in\Cl B:\exists L_i\in u_i\cap\Cl B,\
      o_S(L_1,\ldots,L_n)\subseteq L\} \\
    &\subseteq\{L\in\Cl B:\exists L_i\in u_i,\
      o_S(L_1,\ldots,L_n)\subseteq L\} \\
    &=o_{\Cl C^\star}(u_1,\ldots,u_n)\cap\Cl B.
  \end{align*}
  Since both $o_{\Cl B^\star}(u_1\cap\Cl B,\ldots,u_n\cap\Cl B)$ and
  $o_{\Cl C^\star}(u_1,\ldots,u_n)\cap\Cl B$ are ultrafilters of~\Cl
  B, it follows that they are equal. Finally, for $s\in S$, both
  ultrafilters $\xi^\star(\iota_{\Cl C}(s))$ and $\iota_{\Cl B}(s)$
  are the set of all $L\in\Cl B$ such $s\in L$, that is, they are
  equal.
\end{proof}

\subsection{From Stone topological algebras to Boolean algebras}
\label{sec:Stone-to-Boole}

We next show how from a Stone topological algebra we may obtain a
Boolean subalgebra of~$\Cl P_{co}(S)$. This requires some preparation.

\begin{Lemma}
  \label{l:cont-homo-on-dense->homo}
  Let $\varphi:S\to T$ be a continuous mapping between two topological
  algebras and suppose that the restriction of~$\varphi$ to a dense
  subalgebra $A$ of~$S$ is a homomorphism. Then $\varphi$ is a
  homomorphism.
\end{Lemma}

\begin{proof}
  Let $s_1,\ldots,s_n$ be elements of $S$ and let $o\in\Omega_n$.
  Since $A$ is dense in $S$, for each $k\in\{1,\ldots,n\}$ there is a
  net $(a_{k,i})_i$ in $A$ converging to~$s_k$, where we may assume
  that the same index set is used for all~$k$. By continuity of the
  mappings $o_T$, $\varphi$, and $o_S$ together with the assumption
  that $\varphi|_A$ is a homomorphism, we obtain the following
  equalities:
  \begin{align*}
    o_T\bigl(\varphi(s_1),\ldots,\varphi(s_n)\bigr)
    &=\lim o_T\bigl(\varphi(a_{1,i}),\ldots,\varphi(a_{n,i})\bigr)\\
    &=\lim \varphi\bigl(o_S(a_{1,i},\ldots,a_{n,i})\bigr)\\
    &=\varphi\bigl(o_S(s_1,\ldots,s_n)\bigr).
  \end{align*}
  Hence, $\varphi$ is a homomorphism.
\end{proof}

\begin{Thm}
  \label{t:Stone-to-Boole}
  Suppose that $\varphi:S\to T$ is a continuous homomorphism between
  topological algebras whose image is dense in~$T$. Let $\Cl C$ be a
  Boolean subalgebra of $\Cl B_{\mathrm{max}2}^T$ satisfying
  condition~(\ref{eq:star2}) and let $\Cl B=\varphi^{-1}(\Cl C)$.
  Consider the mapping $\tilde{\varphi}:T\to\Cl B^\star$ defined
    by $\tilde{\varphi}(t)=\varphi^{-1}(t{\uparrow})$, where
    $t{\uparrow}=\{K\in\Cl C: t\in K\}$. Then $\tilde{\varphi}$ is a
    continuous homomorphism with dense image such that
    $\tilde{\varphi}\circ\varphi=\iota_{\Cl B}$. Moreover,
    $\tilde{\varphi}$ distinguishes two elements of~$T$ if and only if
    there is a member of~$\Cl C$ that separates them.
\end{Thm}

\begin{proof}
  By Proposition~\ref{p:lifting-star2}, the Boolean subalgebra \Cl B
  of~$\Cl P_{co}(S)$ satisfies~(\ref{eq:star2}). Note that the mapping
  $\psi:K\mapsto\varphi^{-1}(K)$ is an onto homomorphism of Boolean
  algebras $\Cl C\to\Cl B$. We claim that it is an isomorphism.
  Indeed, if $K$ and $L$ are distinct members of $\Cl C$ then the
  symmetric difference $K\vartriangle L$ is a nonempty open subset of
  $T$, whence it contains some element of the form $\varphi(s)$ for
  some $s\in S$, which implies that
  $\varphi^{-1}(K)\vartriangle\varphi^{-1}(L)$ is also nonempty,
  thereby showing that $\psi$ is injective. We thus get the external
  commuting square in the following diagram of continuous mappings,
  where the vertical arrows are homomorphisms with dense images given
  by Theorem~\ref{t:Boole-to-Stone}:
  \begin{displaymath}
    \xymatrix{
      S
      \ar[r]^\varphi
      \ar[d]_{\iota_{\Cl B}}
      &
      T
      \ar[d]^{\iota_{\Cl C}}
      \ar[ld]_{\tilde{\varphi}}
      \\
      \Cl B^\star
      \ar[r]_{\psi^\star}
      &
      \Cl C^\star
    }
  \end{displaymath}
  By Lemma~\ref{l:cont-homo-on-dense->homo}, we conclude that
  $\psi^\star$ is an isomorphism of Stone topological algebras.
  Further noting that $(\psi^\star)^{-1}\circ\iota_{\Cl
    C}=\tilde{\varphi}$, we deduce that $\tilde{\varphi}$ is indeed a
  continuous homomorphism with dense image and the equality
  $\tilde{\varphi}\circ\varphi=\iota_{\Cl B}$ follows from the
  commutativity of the outer square in the above diagram. The mapping
  $\tilde{\varphi}$ distinguishes two points of~$T$ if and only if so
  does $\iota_{\Cl C}$, which is equivalent to the condition that the
  points in question are separated by the members of~$\Cl C$.
\end{proof}

\begin{Cor}
  \label{c:Stone-to-Boole}
  Let $\varphi:S\to T$ be a continuous homomorphism with dense image,
  where $S$ is a topological algebra and $T$ is a Stone topological
  algebra. Then the Boolean algebra $\Cl B_\varphi=\{\varphi^{-1}(K):
  K\in\Cl P_{co}(T)\}$ satisfies~(\ref{eq:star2}) and $T$ is
  isomorphic with the Stone topological algebra $\Cl B_\varphi^\star$
  of Theorem~\ref{t:Boole-to-Stone}.
\end{Cor}

\begin{proof}
  By Propositions~\ref{p:Bmax2-in-Stone} and~\ref{p:lifting-star2},
  $\Cl B_\varphi$ satisfies~\eqref{eq:star2}. In the notation of the
  proof of Theorem~\ref{t:Stone-to-Boole}, all that remains to show is
  that $\iota_{\Cl C}$ is an isomorphism of Stone topological
  algebras, where $\Cl C=\Cl B_{\mathrm{max}2}^T$. In fact, we know
  that $\Cl C=\Cl P_{co}(T)$ by Proposition~\ref{p:Bmax2-in-Stone},
  hence $\iota_{\Cl C}$ is a homeomorphism. Since it is a homomorphism
  by Theorem~\ref{t:Boole-to-Stone}, it is indeed an isomorphism of
  Stone topological algebras.
\end{proof}

In Corollary~\ref{c:Stone-to-Boole}, one may take $S$ to be the term
algebra $T_\Omega(X)$ for a generating subset $X$ of~$T$. Then,
Corollary~\ref{c:Stone-to-Boole} shows that all Stone topological
algebras are duals of Boolean algebras of clopen subsets of a term
algebra satisfying~\eqref{eq:star2}. This may be viewed as an
alternative approach to duality compared with that adopted
in~\cite{Gehrke:2016a}.

\begin{Cor}
  \label{c:Bmax}
  Let $\varphi:T_\Omega(X)\to\Om X{\St}_\Omega$ be the natural
  homomorphism. Then the Boolean algebra $\Cl B_\varphi$ defined in
  Corollary~\ref{c:Stone-to-Boole} is the largest Boolean subalgebra
  of~$\Cl P_{co}\bigl(T_\Omega(X)\bigr)$ satisfying~(\ref{eq:star2}).
\end{Cor}

\begin{proof}
  By Propositions~\ref{p:Bmax2-in-Stone} and~\ref{p:lifting-star2},
  $\Cl B_\varphi$ is indeed a Boolean subalgebra of~$\Cl
  P_{co}\bigl(T_\Omega(X)\bigr)$ satisfying~\eqref{eq:star2}. By
  Theorem~\ref{t:Boole-to-Stone}, $(\Cl
  B_{\mathrm{max}2}^{T_\Omega(X)})^\star$ is an $X$-generated Stone
  topological algebra. The dual of the inclusion mapping $\eta:\Cl
  B_\varphi\to\Cl B_{\mathrm{max}2}^{T_\Omega(X)}$ is an onto
  continuous mapping $\eta^\star:(\Cl
  B_{\mathrm{max}2}^{T_\Omega(X)})^\star\to\Cl B_\varphi^\star$ which
  is a homomorphism respecting the generating mappings from the
  space~$X$ by Theorem~\ref{t:dual-of-inclusion}. But, by
  Corollary~\ref{c:Stone-to-Boole}, $\Cl B_\varphi^\star$ is freely
  generated by~$X$. Hence, $\eta^\star$ must be injective and, dually,
  $\eta$ must be surjective. This shows that $\Cl B_\varphi=\Cl
  B_{\mathrm{max}2}^{T_\Omega(X)}$.
\end{proof}

Note that, in view of Theorem~\ref{t:Boole-to-Stone} and
Corollary~\ref{c:Stone-to-Boole}, Corollary~\ref{c:Bmax} may be
thought as providing a construction of the absolutely free Stone
topological algebra $\Om X{\St}_\Omega$, modulo the identification of
the Boolean algebra $\Cl B_{\mathrm{max}2}^{T_\Omega(X)}$, for which we have no
constructive description.

\begin{Thm}
  \label{t:Bmax-proper}
  Let $\Omega$ be a topological signature for which there is $n>1$
  such that $\Omega_n$ and $X$ are both nonempty discrete spaces. Then
  $\Cl B_{\mathrm{max}2}^{T_\Omega(X)}$ is a proper Boolean subalgebra
  of~$\Cl P_{co}\bigl(T_\Omega(X)\bigr)$.
\end{Thm}

\begin{proof}
 Fix $n>1$ such that $\Omega_n$ is nonempty and discrete and consider
 the signature $\Omega'=\Omega'_n=\Omega_n$. Notice, that $T_{\Omega'}(X)$ 
 is a closed subset of~$T_\Omega(X)$, and for each $t\in T_{\Omega'}(X)$, 
 the set $\{t\}$ is an open subset of~$T_\Omega(X)$. Thus, every subset of
 $T_{\Omega'}(X)$ is clopen in~$T_\Omega(X)$. 
 Let $L$ consist of all elements of $T_{\Omega'}(X)$ of the form
 $o(t,\ldots,t)$ with $o\in\Omega_n$ and $t\in T_{\Omega'}(X)$. For
 any $o\in\Omega_n$, the set
 $E_n^{-1}(L)\cap\{o\}\times(T_\Omega(X))^n$ is not a finite union of
 boxes, while $L$ is a clopen subset of~$T_\Omega(X)$. Hence, the
 Boolean algebra %
 $\Cl P_{co}\bigl(T_\Omega(X)\bigr)$ does not
 satisfy~\eqref{eq:star2}, so that $\Cl
 B_{\mathrm{max}2}^{T_\Omega(X)}$ must be a proper Boolean subalgebra
 of~$\Cl P_{co}\bigl(T_\Omega(X)\bigr)$.
\end{proof}

We may now derive a result showing that, at least beyond the unary
case, the \v Cech-Stone compactification does not provide a
construction for the free Stone topological algebra~$\Om
X{\St}_\Omega$.

\begin{Cor}
  \label{c:non-unary}
  Let $\Omega$ be a discrete signature with at least one operation
  symbol of arity greater than 1 and let $X$ be a nonempty discrete
  space. Then it is not possible to define on the Stone space
  $\beta\bigl(T_ \Omega(X)\bigr)$ a structure of topological algebra
  in which $T_\Omega(X)$ is a subalgebra.
\end{Cor}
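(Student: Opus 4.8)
The plan is to argue by contradiction, reducing the statement to Theorem~\ref{t:Bmax-proper} via the Boolean-algebra duality of Section~\ref{sec:Boole}. First I would record the underlying topological facts. Since $\Omega$ is a discrete signature and $X$ is discrete, each fixed shape determines a product of finitely many discrete spaces, hence a discrete space, and $T_\Omega(X)$, being the topological sum of these, is itself discrete; in particular $\Cl P_{co}\bigl(T_\Omega(X)\bigr)=\Cl P\bigl(T_\Omega(X)\bigr)$. Moreover, the \v Cech-Stone embedding $\varepsilon\colon T_\Omega(X)\hookrightarrow\beta\bigl(T_\Omega(X)\bigr)$ identifies $T_\Omega(X)$ with the dense set of isolated points of $\beta\bigl(T_\Omega(X)\bigr)$, and for every $A\subseteq T_\Omega(X)$ the closure $\overline{A}$ is clopen with $\varepsilon^{-1}\bigl(\overline{A}\bigr)=A$.

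Next, suppose for contradiction that $\beta\bigl(T_\Omega(X)\bigr)$ carries a topological $\Omega$-algebra structure for which it is $X$-generated, with $X$ embedded via $\varepsilon$ and the inclusion $T_\Omega(X)\hookrightarrow\beta\bigl(T_\Omega(X)\bigr)$ a continuous homomorphism (which is what ``extending $T_\Omega(X)$'' amounts to). Write $S$ for this Stone topological algebra. The unique continuous homomorphism $T_\Omega(X)\to S$ extending the inclusion of $X$ is then $\varepsilon$ itself. Since $\Omega$ is a Stone signature (each $\Omega_n$ being finite and discrete), Theorem~\ref{t:Stone-to-Boole} applies and yields that $\Cl B(S)=\{\varepsilon^{-1}(K):K\text{ clopen in }S\}$ is a Boolean subalgebra of $\Cl P_{co}\bigl(T_\Omega(X)\bigr)$ satisfying~\eqref{eq:star}. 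But the facts recorded above give $\varepsilon^{-1}\bigl(\overline{A}\bigr)=A$ for every $A\subseteq T_\Omega(X)$, with $\overline{A}$ clopen, so $\Cl B(S)=\Cl P\bigl(T_\Omega(X)\bigr)=\Cl P_{co}\bigl(T_\Omega(X)\bigr)$; hence $\Cl P_{co}\bigl(T_\Omega(X)\bigr)$ satisfies~\eqref{eq:star}.

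This contradicts Theorem~\ref{t:Bmax-proper}. Indeed, the present hypotheses---some $n>1$ with $\Omega_n$ nonempty and discrete, and $X$ nonempty and discrete---are exactly those of that theorem, which produces a clopen $L\subseteq T_\Omega(X)$ with $E_n^{-1}(L)$ not a finite union of parallelepipeds, i.e.\ such that $\Cl P_{co}\bigl(T_\Omega(X)\bigr)$ fails~\eqref{eq:star}. Equivalently, one may invoke Corollary~\ref{c:Bmax}: $\Cl B(S)\subseteq\Cl B_{\max}$, while $\Cl B_{\max}$ is a proper subalgebra of $\Cl P_{co}\bigl(T_\Omega(X)\bigr)$ by Theorem~\ref{t:Bmax-proper}, contradicting $\Cl B(S)=\Cl P_{co}\bigl(T_\Omega(X)\bigr)$. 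This completes the proof.

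The one step requiring care is the identification $\Cl B(S)=\Cl P_{co}\bigl(T_\Omega(X)\bigr)$: it rests precisely on $T_\Omega(X)$ being discrete and on the homomorphism $T_\Omega(X)\to S$ fed into Theorem~\ref{t:Stone-to-Boole} being the \v Cech-Stone embedding $\varepsilon$, which recognizes every subset of $T_\Omega(X)$. Everything else is a direct application of results already established above, and no further topological input (such as Glicksberg's theorem on products of \v Cech-Stone compactifications) is needed beyond what is already packaged into Theorem~\ref{t:Bmax-proper}.
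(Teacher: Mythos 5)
Your overall strategy --- argue by contradiction, feed the hypothetical structure into Theorem~\ref{t:Stone-to-Boole} to obtain a Boolean subalgebra of $\Cl P\bigl(T_\Omega(X)\bigr)$ satisfying~\eqref{eq:star}, use discreteness of $T_\Omega(X)$ to identify that subalgebra with all of $\Cl P\bigl(T_\Omega(X)\bigr)$, and contradict Theorem~\ref{t:Bmax-proper} --- is exactly the paper's. The gap is in how you set up the contradiction. You assume that the hypothetical structure has $X$ embedded via $\varepsilon$ and that the inclusion $T_\Omega(X)\hookrightarrow\beta\bigl(T_\Omega(X)\bigr)$ is a continuous homomorphism, so that the map fed into Theorem~\ref{t:Stone-to-Boole} is the \v Cech--Stone embedding $\varepsilon$ itself. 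But the corollary asserts something stronger: \emph{no} topological $\Omega$-algebra structure on the space $\beta\bigl(T_\Omega(X)\bigr)$ admits \emph{any} generating mapping from~$X$. An arbitrary generating mapping $\varphi:X\to S$ need not send $X$ into the canonical copy of $T_\Omega(X)$, its homomorphic extension $\hat{\varphi}:T_\Omega(X)\to S$ need not be injective, and the dense subalgebra it generates may have nothing to do with $T_\Omega(X)\subseteq\beta\bigl(T_\Omega(X)\bigr)$. Your identification $\Cl B(S)=\Cl P\bigl(T_\Omega(X)\bigr)$, which as you yourself note ``rests precisely on'' the homomorphism being $\varepsilon$, is then unavailable. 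In effect you have proved only the weaker fact that the term algebra operations do not extend continuously to $\beta\bigl(T_\Omega(X)\bigr)$ --- essentially the observation already made after Theorem~\ref{t:free-Stone-unary-discrete} via Glicksberg's theorem --- whereas the added generality is the whole point of the corollary.

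For comparison, the paper's proof takes an \emph{arbitrary} generating mapping $\varphi:X\to S$ and asserts $\Cl B(S)=\Cl P\bigl(T_\Omega(X)\bigr)$ from discreteness of $T_\Omega(X)$ alone, the underlying point being that the proof of Theorem~\ref{t:Stone-to-Boole} exhibits $\hat{\varphi}^{-1}$ as a Boolean algebra isomorphism of $\Cl P_{co}(S)=\Cl P_{co}\bigl(\beta(T_\Omega(X))\bigr)\cong\Cl P\bigl(T_\Omega(X)\bigr)$ onto $\Cl B(S)$, with no appeal to $\hat{\varphi}$ being the \v Cech--Stone embedding. (That step is itself stated tersely --- an abstract isomorphism with the full power set does not by itself force equality as subalgebras of $\Cl P\bigl(T_\Omega(X)\bigr)$ --- but it is precisely the place where the generating mapping is kept arbitrary.) To repair your argument you must treat this general case rather than reduce to the canonical embedding.
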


\begin{proof}
  Suppose on the contrary that the Stone space %
  $S=\beta\bigl(T_ \Omega(X)\bigr)$ admits a structure of topological
  algebra extending the structure of $T_\Omega(X)$ and let
  $\varphi:T_\Omega(X)\to S$ %
  be the inclusion mapping. By Theorem~\ref{t:Stone-to-Boole}, we have
  an associated Boolean subalgebra $\Cl B_\varphi=\{\varphi^{-1}(K):
  K\in\Cl P_{co}(S)\}$ %
  of~$\Cl P\bigl(T_\Omega(X)\bigr)$ satisfying~\eqref{eq:star2}. Since
  $T_\Omega(X)$ is a discrete space, $\Cl B_\varphi$ is equal to $\Cl
  P\bigl(T_\Omega(X)\bigr)$ (see, for instance,
  \cite[Theorem~3.27]{Hindman&Strauss:2012;bk}). But, since $\Cl
  B_\varphi$ satisfies~\eqref{eq:star2}, it must be contained in~$\Cl
  B_{\mathrm{max}2}^{T_\Omega(X)}$. It follows that $\Cl
  B_{\mathrm{max}2}^{T_\Omega(X)}=\Cl P\bigl(T_\Omega(X)\bigr)$, which
  contradicts Theorem~\ref{t:Bmax-proper}.
\end{proof}

The special case of Theorem~\ref{t:Stone-to-Boole} for a profinite
algebra $T$ is particularly interesting. Suppose that $T$ is generated
by a continuous mapping $\varphi:X\to T$ and consider the unique
continuous homomorphic extension $\hat{\varphi}:T_\Omega(X)\to T$.
Recall that $\Cl B_{\hat{\varphi}}$ consists of all subsets
of~$T_\Omega(X)$ of the form $\hat{\varphi}^{-1}(K)$ where $K$ is a
clopen subset of~$T$. Now, by Theorem~\ref{t:profiniteness}, a subset
$K$ of~$T$ 
is clopen if and only if there is a continuous homomorphism
$\psi:T\to F$ onto a finite algebra $F$ such that
$K=\psi^{-1}(\psi(K))$. Hence, $\Cl B_{\hat{\varphi}}$ consists of the
sets of the form $L=\hat{\varphi}^{-1}\bigl(\psi^{-1}(P)\bigr)$ with
$P$ an arbitrary subset of~$F$. In case $T=\Om XV$ is a relatively
free profinite algebra, over a pseudovariety \pv V of finite algebras,
the composite homomorphisms $\psi\circ\hat{\varphi}$ may be
characterized simply as the onto continuous homomorphisms
$T_\Omega(X)\to F$ with $F\in\pv V$. This leads to the notion of
\emph{\pv V-recognizable tree language} (better known as \emph{regular
  tree language} \cite{Gecseg&Steinby:1997,Steinby:2005} in case $\pv
V=\pv{Fin}_\Omega$) of computer science and so $\Cl B_{\hat{\varphi}}$
consists precisely of all such languages over the ``alphabet'' $X$.
Characterizations of the Boolean algebras $\Cl B_{\hat{\varphi}}$ with
$X$ finite and discrete can be found as part of the analog of
Eilenberg's Correspondence Theorem (or ``Variety Theorem'')
\cite{Almeida:1994a,Steinby:1992,Steinby:2005}.

\section{Stone varieties}
\label{sec:Stone-varieties}

In this section, we present one of our main results and applications.
It is a generalization for topological signatures of
Theorem~\ref{t:quotient-profinite}, that result being the special case
of the Stone pseudovariety $\pv{Fin}_\Omega$.

\begin{Thm}
  \label{t:residual}
  Let $\Omega$ be an arbitrary topological signature and let \Cl S be
  an arbitrary Stone pseudovariety. Let $\varphi:S\to T$ be an onto
  continuous homomorphism of Stone topological algebras where $S$ is
  residually \Cl S. Then $T$ is also residually~\Cl S.
\end{Thm}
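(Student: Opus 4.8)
The plan is to pass to the dual side and recast ``residually $\Cl S$'' as a closure property of a Boolean algebra of clopen subsets of a term algebra, the point being that this property behaves well under intersection. Fix a topological space $X$ with a generating mapping $\iota_S\colon X\to S$ (for instance $X=S$, $\iota_S=\mathrm{id}_S$). Since $\varphi$ is onto, $\iota_T=\varphi\circ\iota_S$ is a generating mapping for $T$, and the continuous homomorphic extensions to $T_\Omega(X)$ satisfy $\hat\iota_T=\varphi\circ\hat\iota_S$. As $\Omega$ is a Stone signature, Theorem~\ref{t:Stone-to-Boole} yields Boolean subalgebras $\Cl B(S),\Cl B(T)$ of $\Cl P_{co}\bigl(T_\Omega(X)\bigr)$, both satisfying~\eqref{eq:star}, with $S\cong\bigl(\Cl B(S)\bigr)^\star$ and $T\cong\bigl(\Cl B(T)\bigr)^\star$ as $X$-generated Stone topological algebras; moreover, since the $\varphi$-preimage of a clopen set is clopen and $\hat\iota_T^{-1}(K)=\hat\iota_S^{-1}\bigl(\varphi^{-1}(K)\bigr)$, we get $\Cl B(T)\subseteq\Cl B(S)$.

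The key lemma I would isolate is: for any Boolean subalgebra $\Cl C$ of $\Cl P_{co}\bigl(T_\Omega(X)\bigr)$ satisfying~\eqref{eq:star}, the Stone topological algebra $\Cl C^\star$ of Theorem~\ref{t:Boole-to-Stone} is residually $\Cl S$ if and only if $\Cl C$ is the union of the Boolean subalgebras $\Cl B$ of $\Cl C$ that satisfy~\eqref{eq:star} and have $\Cl B^\star\in\Cl S$. For the ``if'' direction, two distinct ultrafilters of $\Cl C$ are separated by some $L$ that lies in one such $\Cl B$; the dual of the inclusion $\Cl B\hookrightarrow\Cl C$ is a continuous homomorphism of $\Omega$-algebras by Theorem~\ref{t:dual-of-inclusion}, maps $\Cl C^\star$ onto $\Cl B^\star\in\Cl S$, and separates the two ultrafilters (one contains $L$, the other $L^c\in\Cl B$). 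For the ``only if'' direction, let $L\in\Cl C$ (the cases $L=\emptyset,T_\Omega(X)$ being covered by the trivial Boolean subalgebra); by Theorem~\ref{t:Boole-Stone-Boole}, $\Cl U_L$ is clopen in $\Cl C^\star$ and pulls back along the canonical extension to $L$; Lemma~\ref{l:recognition} gives $D\in\Cl S$ and a continuous homomorphism $\theta\colon\Cl C^\star\to D$ with $\theta^{-1}\bigl(\theta(\Cl U_L)\bigr)=\Cl U_L$; replacing $D$ by the closed (hence $\Cl S$-)subalgebra $\theta(\Cl C^\star)$, we may assume $\theta$ onto, so $\theta(\Cl U_L)$ is clopen in $D$; then $\Cl B:=\Cl B(D)$ (for the induced generating mapping) satisfies~\eqref{eq:star}, has $\Cl B^\star\cong D\in\Cl S$, and---using $\Cl B(\Cl C^\star)=\Cl C$ from Theorem~\ref{t:Boole-Stone-Boole}---is a subalgebra of $\Cl C$ containing $L$.

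Granting the lemma, the theorem follows quickly. By the ``only if'' part applied to $S$, the subalgebra $\Cl B(S)$ is the union of Boolean subalgebras $\Cl B'$ satisfying~\eqref{eq:star} with $\Cl B'^\star\in\Cl S$. Given $L\in\Cl B(T)$, choose such a $\Cl B'\subseteq\Cl B(S)$ with $L\in\Cl B'$ and set $\Cl B=\Cl B'\cap\Cl B(T)$. Then $\Cl B$ satisfies~\eqref{eq:star} by Corollary~\ref{c:Bcap}, clearly $L\in\Cl B\subseteq\Cl B(T)$, and the dual of the inclusion $\Cl B\hookrightarrow\Cl B'$ exhibits $\Cl B^\star$ as a Stone quotient of $\Cl B'^\star\in\Cl S$, whence $\Cl B^\star\in\Cl S$ since $\Cl S$ is closed under Stone quotients. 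Thus $\Cl B(T)$ is the union of its Boolean subalgebras satisfying~\eqref{eq:star} with dual in $\Cl S$, and the ``if'' part of the lemma gives that $T\cong\bigl(\Cl B(T)\bigr)^\star$ is residually $\Cl S$.

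The main obstacle is exactly what forces this detour: a continuous homomorphism $S\to C$ with $C\in\Cl S$ recognizing a clopen subset of $S$ need not factor through $\varphi$, so recognition cannot be transported from $S$ to $T$ directly. The duality converts recognition data into a Boolean subalgebra, and the decisive new ingredient is that intersecting a ``good'' subalgebra with $\Cl B(T)$ stays good---Corollary~\ref{c:Bcap} preserves~\eqref{eq:star}, and closure of $\Cl S$ under Stone quotients preserves the dual condition. I expect the fiddliest bookkeeping to be in the ``only if'' part of the lemma: tracking the various generating mappings under the identifications of Theorems~\ref{t:Stone-to-Boole} and~\ref{t:Boole-Stone-Boole}, and checking that $\theta(\Cl U_L)$ is clopen in $D$.
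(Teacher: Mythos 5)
Your proposal is correct and follows essentially the same route as the paper's own proof: both hinge on Lemma~\ref{l:recognition} applied to the $\varphi$-preimage of a separating clopen set, the stability of condition~\eqref{eq:star} under intersections (Corollary~\ref{c:Bcap}), the duality package of Theorems~\ref{t:Boole-to-Stone}, \ref{t:dual-of-inclusion}, \ref{t:Stone-to-Boole} and~\ref{t:Boole-Stone-Boole}, and closure of \Cl S under Stone quotients. The paper runs the argument pointwise via the pullback/pushout diagram built from $\Cl B(T)\cap\Cl B(U)$, while you repackage the identical mechanism as an iff-characterization of residual \Cl S-membership on the Boolean side; the substance is the same.
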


\begin{proof}
  Let $t_1,t_2$ be a pair of distinct points of~$T$. Since $T$ is a
  Stone space, there is a clopen subset $K$ of~$T$ such that $t_1\in
  K$ and $t_2\notin K$. Applying Lemma~\ref{l:recognition} to the
  clopen subset $L=\varphi^{-1}(K)$ of~$S$, we obtain a continuous
  homomorphism $\psi:S\to U$ onto a member of~\Cl S such that
  $L=\psi^{-1}(\psi(L))$.
  Then, in the notation of Corollary~\ref{c:Stone-to-Boole}, by
  Theorem~\ref{t:Stone-to-Boole}, the Boolean subalgebras $\Cl
  B_\varphi$ and $\Cl B_\psi$ of $\Cl P_{co}(S)$ both
  satisfy~\eqref{eq:star2}. By Corollary~\ref{c:Bcap}, the Boolean
  algebra $\Cl B=\Cl B_\varphi\cap\Cl B_\psi$ also
  satisfies~\eqref{eq:star2}. Consider the various inclusions between
  the four Boolean algebras $\Cl B$, $\Cl B_\varphi$, $\Cl B_\psi$,
  and $\Cl P_{co}(S)$, where we denote $\Cl B\hookrightarrow\Cl
  B_\varphi$ by $\xi$. Dualizing, in view of
  Theorem~\ref{t:dual-of-inclusion}, we obtain the following
  commutative diagram of continuous homomorphisms between Stone
  topological algebras, where $\tilde{\varphi}$ 
  is given by
  Theorem~\ref{t:Stone-to-Boole} and all mappings are onto:
  \begin{displaymath}
    \xymatrix@C=10mm{
      S
      \ar[r]^\varphi
      \ar[d]_\psi
      \ar@/^20pt/[rr]^{\iota_{\Cl B_\varphi}}
      \ar@/_/[rrd]^{\iota_{\Cl B}}
      &
      T
      \ar[rd]^\delta
      \ar[r]^(.4){\tilde{\varphi}}
      &
      (\Cl B_\varphi)^\star
      \ar[d]^{\xi^\star}
      \\
      U \ar[rr]^\varepsilon
      &&
      \Cl B^\star
    }
  \end{displaymath}
  In particular, since $U$ belongs to the
  Stone pseudovariety \Cl S, so does the Stone topological algebra
  $\Cl B^\star$. To finish the proof, it suffices to show that
  $\delta(t_1)\ne\delta(t_2)$.

  Note that $L\in\Cl B_\varphi\cap\Cl B_\psi=\Cl B$. Let $s_i\in S$ be
  such that $\varphi(s_i)=t_i$ ($i=1,2$). Then we have $s_1\in L$
  while $s_2\notin L$. As $\delta\circ\varphi=\iota_{\Cl B}$ and, for
  each $s\in S$, we have $\iota_{\Cl B}(s)=\{J\in\Cl B: s\in J\}$, we
  get $\delta(t_1)=\iota_{\Cl B}(s_1)\ne\iota_{\Cl
    B}(s_2)=\delta(t_2)$, thereby completing the proof of the theorem.
\end{proof}

For a class \Cl C of Stone topological algebras, we denote by
$\widehat{\Cl C}$ the class of all residually \Cl C Stone topological
algebras. Note that $\widehat{\widehat{\Cl C}}=\widehat{\Cl C}$. Thus,
the correspondence $\Cl C\mapsto\widehat{\Cl C}$ is a closure operator
on the class of all classes of Stone topological algebras. We call
$\widehat{\Cl C}$ the \emph{residual closure} of~\Cl C. We say that a
class \Cl C of Stone topological algebras \Cl S is \emph{residually
  closed} if $\Cl C=\widehat{\Cl C}$.

\begin{Cor}
  \label{c:residual-closure}
  Let $\Omega$ be a topological signature and \Cl S a Stone
  pseudovariety. Then $\widehat{\Cl S}$ is also a Stone pseudovariety
  and, for every topological space $X$, we have $\Om X{\Cl S}\simeq\Om
  X{\widehat{\Cl S}}$.
\end{Cor}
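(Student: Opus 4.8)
The plan is to prove the two assertions separately. The substantive content lies entirely in the first one and reduces to Theorem~\ref{t:residual}; the isomorphism is then a formal consequence of the universal property recorded in Proposition~\ref{p:free-Stone2}.

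First I would verify that $\widehat{\Cl S}$ satisfies the three closure properties, together with nonemptiness, required of a Stone pseudovariety. It is nonempty because $\Cl S\subseteq\widehat{\Cl S}$ (every $A\in\Cl S$ is residually $\Cl S$ via $\mathrm{id}_A$) and $\Cl S$ is nonempty. For finite direct products, if $S_1,\dots,S_k$ are residually $\Cl S$ and $(a_i)_i\ne(b_i)_i$ in $S_1\times\cdots\times S_k$, then $a_j\ne b_j$ for some $j$, and the composite of the $j$-th projection with a continuous homomorphism $S_j\to C\in\Cl S$ separating $a_j$ and $b_j$ separates the two tuples; since a finite product of Stone topological algebras is again one, it lies in $\widehat{\Cl S}$. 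For closed subalgebras, distinct points of a closed subalgebra $T$ of a residually $\Cl S$ algebra $S$ are already distinct in $S$, hence separated by a continuous homomorphism into a member of $\Cl S$, which restricts to $T$. Finally, closure of $\widehat{\Cl S}$ under Stone quotients is precisely the assertion of Theorem~\ref{t:residual}. This is the only property that is not routine, but the work behind it has already been carried out using the Boolean-algebra duality of Section~\ref{sec:Boole}, so here it is merely invoked.

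For the isomorphism, I would show that the $\Cl S$-free Stone topological algebra $\Om X{\Cl S}$, with its natural generating mapping $\iota:X\to\Om X{\Cl S}$, is at the same time an $\widehat{\Cl S}$-free Stone topological algebra over $X$; then $\Om X{\Cl S}\simeq\Om X{\widehat{\Cl S}}$ by the uniqueness clause of Proposition~\ref{p:free-Stone}, now legitimately applied to the Stone pseudovariety $\widehat{\Cl S}$ thanks to the first part. Indeed, $\Om X{\Cl S}$ is residually $\Cl S$ by definition, hence it belongs to $\widehat{\Cl S}$ and, since $\Cl S\subseteq\widehat{\Cl S}$, it is a fortiori residually $\widehat{\Cl S}$; and $\iota$ is a continuous generating mapping. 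It remains to check the universal property relative to $\widehat{\Cl S}$: given a continuous mapping $\varphi:X\to T$ into a member $T$ of $\widehat{\Cl S}$ — that is, into a Stone topological algebra $T$ that is residually $\Cl S$ — Proposition~\ref{p:free-Stone2} furnishes a unique continuous homomorphism $\hat\varphi:\Om X{\Cl S}\to T$ with $\hat\varphi\circ\iota=\varphi$. Thus $(\Om X{\Cl S},\iota)$ meets the definition of an $\widehat{\Cl S}$-free Stone topological algebra over $X$, as required.

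I do not foresee a genuine obstacle: granted Theorem~\ref{t:residual}, the first assertion is a bookkeeping check of the pseudovariety axioms and the second is a one-line application of Proposition~\ref{p:free-Stone2}. The point that needs attention is the order of the argument — one must know that $\widehat{\Cl S}$ is a Stone pseudovariety before forming $\Om X{\widehat{\Cl S}}$, since Proposition~\ref{p:free-Stone} is stated for Stone pseudovarieties. A possible alternative for the second part is to take the canonical onto continuous homomorphism $\Om X{\widehat{\Cl S}}\to\Om X{\Cl S}$ induced by $\Cl S\subseteq\widehat{\Cl S}$ and verify that it is injective, but routing through Proposition~\ref{p:free-Stone2} avoids that separate check.
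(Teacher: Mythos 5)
Your proposal is correct and follows essentially the same route as the paper: closure under Stone quotients is the only nontrivial pseudovariety axiom and is exactly Theorem~\ref{t:residual}, while the isomorphism rests on Proposition~\ref{p:free-Stone2}. The only cosmetic difference is that you verify the $\widehat{\Cl S}$-universal property of $\Om X{\Cl S}$ directly and invoke uniqueness of free objects, whereas the paper produces the two canonical homomorphisms $\Om X{\Cl S}\to\Om X{\widehat{\Cl S}}$ and $\Om X{\widehat{\Cl S}}\to\Om X{\Cl S}$ respecting the generating mappings; these are interchangeable.
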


\begin{proof}
  To check that $\widehat{\Cl S}$ is a Stone pseudovariety, the only
  nontrivial requirement is that it be closed under taking Stone
  continuous homomorphic images, which follows from
  Theorem~\ref{t:residual}. The existence of a continuous
  homomorphism %
  $\varphi:\Om X{\Cl S}\to\Om X{\widehat{\Cl S}}$ %
  respecting generating mappings is an immediate consequence of
  Proposition~\ref{p:free-Stone2}. The existence of a continuous
  homomorphism %
  $\Om X{\widehat{\Cl S}}\to\Om X{\Cl S}$ %
  also respecting generating mappings follows from the obvious fact
  that $\psi:\Cl S\subseteq\widehat{\Cl S}$. To conclude the proof, it
  suffices to observe that $\varphi$ and $\psi$ are mutualy inverse
  mappings.
\end{proof}

By a \emph{Stone variety} we mean a nonempty class of Stone
topological algebras that is closed under taking continuous
homomorphic images that are again Stone spaces, closed subalgebras,
and arbitrary direct products.

\begin{Prop}
  \label{p:varieties}
  A class of Stone topological algebras is a Stone variety if
  and only if it is a residually closed Stone pseudovariety.
\end{Prop}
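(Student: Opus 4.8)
The statement asserts a two-way equivalence, so I would prove the two implications separately. The easy direction is that a Stone variety is a residually closed Stone pseudovariety. Let \Cl V be a Stone variety. Since finite direct products are a special case of arbitrary direct products, \Cl V is automatically a Stone pseudovariety. To see that it is residually closed, I must show $\widehat{\Cl V}\subseteq\Cl V$ (the reverse inclusion being trivial). Take $S\in\widehat{\Cl V}$, so $S$ is residually \Cl V; then there is an embedding $\varepsilon:S\hookrightarrow\prod_{i\in I}V_i$ into a product of members of~\Cl V. The image $\varepsilon(S)$ is a closed subalgebra of $\prod_{i\in I}V_i$: it is a subalgebra because $\varepsilon$ is a homomorphism, and it is closed because $S$ is compact, so its continuous image in the Hausdorff space $\prod_{i\in I}V_i$ is compact, hence closed. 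As \Cl V is closed under arbitrary direct products and closed subalgebras, $\varepsilon(S)\in\Cl V$; since $\varepsilon$ is a continuous isomorphism of Stone topological algebras (hence a homeomorphism, as noted in the text after the definition of Stone pseudovariety), $S\in\Cl V$.

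For the converse, let \Cl V be a residually closed Stone pseudovariety; I must upgrade closure under finite products to closure under arbitrary products. Let $(S_i)_{i\in I}$ be a family of members of \Cl V and put $P=\prod_{i\in I}S_i$, which is a Stone topological algebra. The key observation is that $P$ is residually \Cl V: given distinct points $a,b\in P$, they differ in some coordinate $i_0$, so the projection $\pi_{i_0}:P\to S_{i_0}$ is a continuous homomorphism onto a member of \Cl V separating $a$ and $b$. Hence $P\in\widehat{\Cl V}=\Cl V$, using residual closure. Together with the fact that \Cl V is already closed under Stone continuous homomorphic images and closed subalgebras, this shows \Cl V is a Stone variety.

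I do not anticipate a serious obstacle here; the only point demanding a little care is the argument that the image of a compact algebra under a continuous homomorphism into a product is a \emph{closed} subalgebra, so that closure under closed subalgebras applies — but this is exactly the compactness argument already invoked implicitly elsewhere in the paper (continuous images of compact spaces are compact, hence closed in a Hausdorff space). One should also note explicitly that a product of Stone spaces is a Stone space (a product of compact 0-dimensional spaces is compact and 0-dimensional), so that $P$ is a legitimate object in the category under consideration; this is standard. Theorem~\ref{t:residual} is not needed for this proposition — the residual closure hypothesis is used directly — but it is what guarantees, via Corollary~\ref{c:residual-closure}, that residually closed Stone pseudovarieties (equivalently, Stone varieties) exist in abundance, namely as $\widehat{\Cl S}$ for any Stone pseudovariety \Cl S.
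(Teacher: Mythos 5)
Your proof is correct and follows essentially the same route as the paper's: the forward direction embeds a residually $\Cl V$ algebra as a closed subalgebra of a product of members of $\Cl V$, and the converse observes that an arbitrary product of members is residually $\Cl V$ via the coordinate projections and then invokes residual closure. The extra details you supply (compactness forcing the image to be closed, products of Stone spaces being Stone) are exactly the points the paper leaves implicit.
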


\begin{proof}
  A Stone variety \Cl V is obviously a Stone pseudovariety. It is
  residually closed because, if $S$ is residually~\Cl V, then there is
  an embedding of $S$, as a closed subalgebra, into a product of
  members of~\Cl V, and so $S$ belongs to~\Cl V.

  Conversely, we claim that, if \Cl S is a residually closed Stone
  pseudovariety, then \Cl S is a Stone variety. Let $(S_i)_{i\in I}$
  be a nonempty family of members of~\Cl S. Given two distinct
  elements of the product $S=\prod_{i\in I}S_i$, they are
  distinguished by the projection on some component $S_i$, and so $S$
  is residually~\Cl S. Since \Cl S is residually closed, we deduce
  that $S$ belongs to~\Cl S, which proves the claim.
\end{proof}

Note that, for a Stone variety \Cl V and an arbitrary topological
space~$X$, the relatively free Stone topological algebra %
$\Om X{\Cl V}$ belongs to~\Cl V, a fact that follows from the
construction of $\Om X{\Cl V}$ in the proof of Proposition
\ref{p:free-Stone}. We deduce that a Stone topological algebra belongs
to~\Cl V if and only if it is a continuous homomorphic image of some
$\Om X{\Cl V}$.

An immediate consequence of Proposition~\ref{p:varieties} is that
the residual closure of a Stone pseudovariety is the Stone variety it
generates.

The proof of Theorem~\ref{t:residual} may be adapted to establish the
following result.

\begin{Thm}
  \label{t:bigcap}
  If $(\Cl S_i)_{i\in I}$ is a nonempty family of Stone
  pseudovarieties then $\reallywidehat{\bigcap_{i\in I}\Cl
    S_i}=\bigcap_{i\in I}\widehat{\Cl S_i}$.
\end{Thm}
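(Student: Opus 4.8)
The plan is to prove the two inclusions separately. The inclusion $\reallywidehat{\bigcap_{i\in I}\Cl S_i}\subseteq\bigcap_{i\in I}\widehat{\Cl S_i}$ is immediate: residual closure is monotone, since a Stone topological algebra that is residually $\Cl C$ is a fortiori residually $\Cl D$ whenever $\Cl C\subseteq\Cl D$; applying this to $\bigcap_{i\in I}\Cl S_i\subseteq\Cl S_j$ gives $\reallywidehat{\bigcap_{i\in I}\Cl S_i}\subseteq\widehat{\Cl S_j}$ for each $j\in I$. So the substance of the theorem is the reverse inclusion, namely: if a Stone topological algebra $S$ is residually $\Cl S_i$ for every $i\in I$, then $S$ is residually $\bigcap_{i\in I}\Cl S_i$.

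To prove this I would run the argument of Theorem~\ref{t:residual}, but forming a single intersection over the whole family instead of a pushout of two algebras. Fix distinct $s_1,s_2\in S$ and, using that $S$ is a Stone space, a clopen $K\subseteq S$ with $s_1\in K$ and $s_2\notin K$. For each $i\in I$, Lemma~\ref{l:recognition} supplies an onto continuous homomorphism $\psi_i\colon S\to U_i$ with $U_i\in\Cl S_i$ and $\psi_i^{-1}(\psi_i(K))=K$; in particular $\psi_i(K)$ is clopen in $U_i$. Choose a generating mapping $\gamma\colon X\to S$ (e.g.\ $X=S$ with $\gamma=\mathrm{id}_S$), so that each $\psi_i\circ\gamma$ is a generating mapping as well; by Theorem~\ref{t:Stone-to-Boole}, $\Cl B(S)$ and all the $\Cl B(U_i)$ are Boolean subalgebras of $\Cl P_{co}(T_\Omega(X))$ satisfying~\eqref{eq:star}, and $\Cl B(U_i)\subseteq\Cl B(S)$ because $(\psi_i\circ\hat\gamma)^{-1}(K')=\hat\gamma^{-1}(\psi_i^{-1}(K'))$ for $K'$ clopen in $U_i$. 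Put $\Cl B=\bigcap_{i\in I}\Cl B(U_i)$, which again satisfies~\eqref{eq:star} by Corollary~\ref{c:Bcap} (here the nonemptiness of $I$ is used). Dualizing the inclusion $\Cl B\hookrightarrow\Cl B(U_i)$ and invoking Theorems~\ref{t:dual-of-inclusion} and~\ref{t:Stone-to-Boole} produces an onto continuous homomorphism $U_i\to\Cl B^\star$ of $\Omega$-algebras, so $\Cl B^\star$ is a Stone quotient of $U_i\in\Cl S_i$ and hence lies in $\Cl S_i$; as this holds for every $i$, $\Cl B^\star\in\bigcap_{i\in I}\Cl S_i$. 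Dualizing instead the inclusion $\Cl B\hookrightarrow\Cl B(S)$ and identifying $S$ with $(\Cl B(S))^\star$ yields an onto continuous homomorphism $\delta\colon S\to\Cl B^\star$ that respects the generating mappings from $X$, so that $\delta\circ\hat\gamma$ is the canonical continuous homomorphism $\hat\varepsilon\colon T_\Omega(X)\to\Cl B^\star$.

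It remains to verify that $\delta(s_1)\neq\delta(s_2)$, which is the delicate step and where Theorem~\ref{t:Boole-Stone-Boole} enters. The crucial point is that $\hat\gamma^{-1}(K)=(\psi_i\circ\hat\gamma)^{-1}(\psi_i(K))$ belongs to $\Cl B(U_i)$ for every $i$, hence $L:=\hat\gamma^{-1}(K)\in\Cl B$ (and $L^c=\hat\gamma^{-1}(S\setminus K)$). By Theorem~\ref{t:Boole-Stone-Boole}(\ref{item:closure-0}), the closures $\overline{\hat\varepsilon(L)}$ and $\overline{\hat\varepsilon(L^c)}$ are complementary, in particular disjoint, in $\Cl B^\star$. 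Since $\hat\gamma$ has dense image and $K,\,S\setminus K$ are clopen, $\overline{\hat\gamma(L)}=K$ and $\overline{\hat\gamma(L^c)}=S\setminus K$; applying the continuous map $\delta$ gives $\delta(s_1)\in\delta(K)\subseteq\overline{\hat\varepsilon(L)}$ and $\delta(s_2)\in\delta(S\setminus K)\subseteq\overline{\hat\varepsilon(L^c)}$, whence $\delta(s_1)\neq\delta(s_2)$. As $s_1,s_2$ were arbitrary, $S$ is residually $\bigcap_{i\in I}\Cl S_i$, completing the proof. I expect the only real obstacle to be the bookkeeping in the preceding paragraph — arranging the simultaneous ``pullback over all $i$'' so that one fixed clopen set and its $\psi_i$-saturations survive in $\Cl B$; once Corollary~\ref{c:Bcap} and Theorem~\ref{t:Boole-Stone-Boole} are available, the separation argument is a routine reprise of Theorem~\ref{t:residual}.
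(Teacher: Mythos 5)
Your proposal is correct and follows essentially the same route as the paper: the paper's proof likewise applies Lemma~\ref{l:recognition} to a separating clopen set, invokes ``the duality argument in the proof of Theorem~\ref{t:residual}'' to dualize the intersection $\bigcap_{i\in I}\Cl B(U_i)$ (legitimate by Corollary~\ref{c:Bcap}) into a common Stone quotient $U=\Cl B^\star$ of all the $U_i$ and of $S$, and separates the two points via Theorem~\ref{t:Boole-Stone-Boole}. Your write-up merely makes explicit the bookkeeping that the paper leaves to the reader by reference to Theorem~\ref{t:residual}.
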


\begin{proof}
  Since the inclusion %
  $\reallywidehat{\bigcap_{i\in I}\Cl S_i}\subseteq \bigcap_{i\in
    I}\widehat{\Cl S_i}$ is clear, we need to show that, if $T$ is a
  Stone topological algebra that is residually $\Cl S_i$ for each
  $i\in I$, then $T$ is also residually $\bigcap_{i\in I}\Cl S_i$. Let
  $t_1$ and $t_2$ be distinct elements of~$T$ and choose a clopen
  subset $K$ of~$T$ such that $t_1\in K$ and $t_2\notin K$. By
  Lemma~\ref{l:recognition}, there are onto continuous homomorphisms
  $\varphi_i:T\to S_i$ such that $S_i\in\Cl S_i$ and
  $K=\varphi_i^{-1}(\varphi_i(K))$ ($i\in I$). Since every mapping
  $\varphi_i$ is closed, the image $\varphi_i(K)$ is a clopen subset
  of $S_i$.

  Consider the Boolean subalgebras $\Cl
  B_{\varphi_i}=\{\varphi_i^{-1}(L): L\in\Cl P_{co}(S_i)\}$ of $\Cl
  P_{co}(T)$, which satisfies condition~\eqref{eq:star2} by
  Corollary~\ref{c:Stone-to-Boole}. By Corollary~\ref{c:Bcap2}, their
  intersection $\Cl B$ is also a Boolean subalgebra of~$\Cl P_{co}(T)$
  satisfying \eqref{eq:star2}. We get the following diagram of
  inclusions between Boolean algebras:
  \begin{displaymath}
    \xymatrix{
      \Cl P_{co}(T)
      &
      \Cl B_{\varphi_i}
      \ar@{^{((}->}[l]
      \\
      &
      **[r]{\Cl B.}
      \ar@{^{((}->}[lu]
      \ar@{^{((}->}[u]
    }
  \end{displaymath}
  The dual diagram of continuous homomorphisms yields the lower
  triangle of the following commutative diagram for every $i\in I$:
  \begin{displaymath}
    \xymatrix@C=12mm{
      &
      S_i
      \ar[d]^{\tilde{\varphi}_i}
      \\
      T \ar[r]^{\iota_{\Cl B{\varphi_i}}}
      \ar[rd]_{\iota_{\Cl B}}
      \ar[ru]^{\varphi_i}
      & \Cl B_{\varphi_i}^\star
      \ar[d]^{\psi_i} \\
      & **[r]{\Cl B^\star.}
    }
  \end{displaymath}
  By the assumptions on the $S_i$, we see that $K\in\Cl B_{\varphi_i}$
  for every $i\in I$, whence $K\in\Cl B$. By
  Theorem~\ref{t:Stone-to-Boole}, we get $\iota_{\Cl
    B}(t_1)\ne\iota_{\Cl B}(t_2)$. On the other hand, the continuous
  homomorphisms $\psi_i\circ\tilde{\varphi}_i$ are onto. Since each
  $\Cl S_i$ is a Stone pseudovariety, it follows that $\Cl B^\star$
  belongs to $\bigcap_{i\in I}\Cl S_i$. Hence, $T$ is residually
  $\bigcap_{i\in I}\Cl S_i$.
\end{proof}

Thus, the residual closure operator on Stone pseudovarieties is a
complete meet endomorphism of the lattice $\cal L_\Omega$ of all Stone
pseudovarieties.

If \Cl S is a residually closed Stone pseudovariety, then we may
consider the family of all Stone pseudovarieties $\Cl S_i$ such that
$\widehat{\Cl S_i}=\Cl S$. By Theorem~\ref{t:bigcap}, we conclude that
$\reallywidehat{\bigcap_{i\in I}\Cl S_i}=\Cl S$. This proves the
following result.

\begin{Cor}
  \label{c:interval}
  Given a Stone variety \Cl V, the set of all Stone pseudovarieties
  \Cl S with $\widehat{\Cl S}=\Cl V$ is an interval $[\Cl V_{\min},\Cl
  V]$ of the lattice $\Cl L_\Omega$.\qed
\end{Cor}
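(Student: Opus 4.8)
The plan is to pin down the two endpoints of the claimed interval and then verify convexity. Recall first that, by Proposition~\ref{p:varieties}, a Stone variety is the same thing as a residually closed Stone pseudovariety, so the hypothesis on \Cl V amounts to $\widehat{\Cl V}=\Cl V$; in particular \Cl V itself is a member of the set $\{\Cl S\in\Cl L_\Omega:\widehat{\Cl S}=\Cl V\}$. I would then observe that it is the \emph{largest} member: since every algebra of \Cl S is trivially residually \Cl S (distinct points being separated by the identity homomorphism), one always has $\Cl S\subseteq\widehat{\Cl S}$, so $\widehat{\Cl S}=\Cl V$ forces $\Cl S\subseteq\Cl V$.

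For the bottom endpoint, I would let $(\Cl S_i)_{i\in I}$ be the family of all Stone pseudovarieties with $\widehat{\Cl S_i}=\Cl V$, which is nonempty because \Cl V is among them, and set $\Cl V_{\min}=\bigcap_{i\in I}\Cl S_i$. This is again a Stone pseudovariety, since intersections of nonempty families of Stone pseudovarieties are Stone pseudovarieties (noted in Section~\ref{sec:Stone-pseudovarieties}). Theorem~\ref{t:bigcap} then gives $\reallywidehat{\bigcap_{i\in I}\Cl S_i}=\bigcap_{i\in I}\widehat{\Cl S_i}=\bigcap_{i\in I}\Cl V=\Cl V$, so $\Cl V_{\min}$ lies in the set and, being contained in every $\Cl S_i$, is its least element. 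This is the only non-routine step, and it is exactly the place where Theorem~\ref{t:bigcap} is indispensable — without it there would be no reason for the intersection to land back inside the set.

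It remains to check convexity, which is straightforward from monotonicity of the residual-closure operator: if $\Cl S_1\subseteq\Cl S_2$ then a residually $\Cl S_1$ algebra is residually $\Cl S_2$, so $\widehat{\Cl S_1}\subseteq\widehat{\Cl S_2}$. Hence, for any Stone pseudovariety \Cl S with $\Cl V_{\min}\subseteq\Cl S\subseteq\Cl V$, we get $\Cl V=\widehat{\Cl V_{\min}}\subseteq\widehat{\Cl S}\subseteq\widehat{\Cl V}=\Cl V$, so $\widehat{\Cl S}=\Cl V$. Combining the three paragraphs, the set of Stone pseudovarieties \Cl S with $\widehat{\Cl S}=\Cl V$ is precisely the interval $[\Cl V_{\min},\Cl V]$ of $\Cl L_\Omega$, as claimed.
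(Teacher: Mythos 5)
Your proof is correct and follows essentially the same route as the paper: the paper likewise takes $\Cl V_{\min}$ to be the intersection of all Stone pseudovarieties whose residual closure is $\Cl V$ and invokes Theorem~\ref{t:bigcap} to see that this intersection still has residual closure $\Cl V$. You merely make explicit the routine parts the paper leaves implicit (that $\Cl V$ is the top endpoint via $\Cl S\subseteq\widehat{\Cl S}$, and convexity via monotonicity of the residual closure), and these verifications are all sound.
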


In the profinite case, the minimum of the interval of
Corollary~\ref{c:interval} admits a particularly simple description.

\begin{Thm}
  \label{t:profinite}
  If \Cl V is a Stone variety of profinite algebras then $\Cl
  V_{\min}=\Cl V\cap\pv{Fin}_\Omega$.
\end{Thm}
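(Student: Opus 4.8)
The plan is to verify that $\Cl V\cap\pv{Fin}_\Omega$ is a Stone pseudovariety whose residual closure equals $\Cl V$ and that it is contained in every Stone pseudovariety with residual closure $\Cl V$; by Corollary~\ref{c:interval} this identifies it with $\Cl V_{\min}$. That $\Cl V\cap\pv{Fin}_\Omega$ is a Stone pseudovariety is immediate, being an intersection of two Stone pseudovarieties, and it is nonempty as it contains the trivial algebras. The one external ingredient used throughout is Proposition~\ref{p:finite-quotient-residually-S}, that every finite quotient of a residually $\Cl S$ Stone topological algebra belongs to~$\Cl S$, applied in both directions.

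First I would show $\widehat{\Cl V\cap\pv{Fin}_\Omega}=\Cl V$. The inclusion $\widehat{\Cl V\cap\pv{Fin}_\Omega}\subseteq\widehat{\Cl V}=\Cl V$ follows from monotonicity of residual closure together with the fact that $\Cl V$, being a Stone variety, is residually closed (Proposition~\ref{p:varieties}). For the reverse inclusion, let $S\in\Cl V$. Since $\Cl V$ consists of profinite algebras, $S$ is profinite, hence residually finite; and $S$ is trivially residually~$\Cl V$, being itself a member of~$\Cl V$. Proposition~\ref{p:finite-quotient-residually-S} then gives that every finite quotient of $S$ lies in $\Cl V$, and such a quotient also lies in $\pv{Fin}_\Omega$, hence in $\Cl V\cap\pv{Fin}_\Omega$. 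Since distinct points of $S$ are separated by continuous homomorphisms onto finite algebras, $S$ is residually $\Cl V\cap\pv{Fin}_\Omega$, whence $\Cl V\subseteq\widehat{\Cl V\cap\pv{Fin}_\Omega}$.

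Next I would prove minimality: if $\Cl S$ is any Stone pseudovariety with $\widehat{\Cl S}=\Cl V$, then $\Cl V\cap\pv{Fin}_\Omega\subseteq\Cl S$. Indeed, given a finite $F\in\Cl V\cap\pv{Fin}_\Omega$, we have $F\in\Cl V=\widehat{\Cl S}$, so $F$ is residually~$\Cl S$. Applying Proposition~\ref{p:finite-quotient-residually-S} again, every finite quotient of $F$ belongs to~$\Cl S$; in particular $F$ itself, being a quotient of $F$ via the identity map, belongs to~$\Cl S$. This proves the inclusion, and combined with the previous paragraph and Corollary~\ref{c:interval} we conclude $\Cl V_{\min}=\Cl V\cap\pv{Fin}_\Omega$.

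There is essentially no serious obstacle here: the argument is a short application of Proposition~\ref{p:finite-quotient-residually-S}. The only points to be mildly careful about are that ``finite quotient'' must be read as a continuous homomorphism onto a finite discrete algebra, so that the identity map does witness $F$ as a finite quotient of itself, and that it is precisely the profiniteness of the members of~$\Cl V$ that makes the finite quotients of $S$ abundant enough to separate its points.
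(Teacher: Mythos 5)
Your proof is correct and follows the same overall structure as the paper's: show that $\Cl V\cap\pv{Fin}_\Omega$ is a Stone pseudovariety whose residual closure is~$\Cl V$ (using profiniteness of the members of~$\Cl V$ for the nontrivial inclusion), and then show that it is contained in every Stone pseudovariety with residual closure~$\Cl V$. The only real difference is in the minimality step: the paper uses the elementary observation that a finite algebra residually in a pseudovariety of finite algebras embeds into a product with finitely many factors and hence already belongs to that pseudovariety, whereas you invoke Proposition~\ref{p:finite-quotient-residually-S} (a finite algebra being a finite quotient of itself), which is a valid, slightly heavier substitute resting on Lemma~\ref{l:recognition}.
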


\begin{proof}
  From $\Cl V\cap\pv{Fin}_\Omega\subseteq\Cl V$, we deduce that
  $\reallywidehat{\Cl V\cap\pv{Fin}_\Omega}\subseteq\widehat{\Cl
    V}=\Cl V$. On the other hand, if~$S$ belongs to~\Cl V, then $S$ is
  profinite and so $S$ embeds in a product of its finite quotients,
  thus in a product of members of~$\Cl V\cap\pv{Fin}_\Omega$, which
  proves that %
  $\Cl V\subseteq\reallywidehat{\Cl V\cap\pv{Fin}_\Omega}$. Hence,
  $\Cl V\cap\pv{Fin}_\Omega$ is a pseudovariety of finite algebras
  whose residual closure is~\Cl V. To complete the proof, we claim
  that, if \pv V and \pv W are pseudovarieties of finite algebras such
  that $\pv V\subseteq\pv W\subseteq\widehat{\pv V}$, then $\pv V=\pv
  W$. Indeed, if $S$ is an arbitrary element of~\pv W, then $S$ is
  residually~\pv V, that is, it embeds in a direct product of algebras
  from~\pv V. Since $S$ is finite, it suffices to consider only
  finitely many factors in such a product to achieve the embedding.
  Hence, $S$ belongs to~\pv V.
\end{proof}

\begin{eg}
  \label{eg:profinite}
  Let $\Omega$ be a 0-dimensional signature. By
  Corollary~\ref{c:residual-closure} and
  Proposition~\ref{p:varieties}, the class
  $\reallywidehat{\pv{Fin}_\Omega}$ of all profinite algebras is a
  Stone variety. Suppose also that the space $X$ is 0-dimensional. By
  Proposition~\ref{p:free-Stone-embedding-of-terms}, the topological
  algebra $T_\Omega(X)$ is residually finite. Hence, the variety of
  all profinite $\Omega$-algebras satisfies no nontrivial identities.
  Of course, neither does the variety $\St_\Omega$ of all Stone
  topological $\Omega$-algebras, which contains non-profinite algebras
  by Theorem~\ref{t:Stone-free-not-profinite}. This shows that one
  cannot expect Stone varieties to be defined by identities as in the
  classical Birkhoff theorem for varieties of discrete algebras over
  discrete signatures \cite{Birkhoff:1935}. Nevertheless, of course
  every set $\Sigma$ of identities still defines a Stone variety,
  namely the class of all Stone topological algebras that satisfy
  $\Sigma$.
\end{eg}

A topological space $X$ is said to be \emph{extremally disconnected}
if the closure of every open subset of~$X$ is open. It is easy to see
that every Hausdorff extremally disconnected space is totally
disconnected. The compact extremally disconnected spaces are sometimes
called \emph{Stonean spaces} and turn out to be, up to homeomorphism,
the Stone duals of complete Boolean algebras. Complete Boolean
algebras are known to be injective in the category of Boolean algebras
\cite{Sikorski:1948a} (that is, given a Boolean subalgebra $A$ of a
Boolean algebra $B$, a homomorphism from $A$ to a complete
Boolean algebra $C$ extends to a homomorphism $B\to C$). Dually, and
more generally, Stonean spaces are known to be precisely the
projective spaces in the category of compact spaces
\cite{Gleason:1958}. For more details, see \cite{Johnstone:1986}. Note
that, since a Boolean algebra may always be embedded in a complete
Boolean algebra, dually, every Stone space is a continuous image of
some Stonean space, a fact that is used below.

In spite of Example~\ref{eg:profinite}, it is still natural to expect
Stone varieties to be defined by some sort of identities and it turns
out that Stone pseudoidentities, which we proceed to introduce, play
that role.

Let $X$ be a topological space and let \Cl S be a Stone pseudovariety.
Let $\iota:X\to\Om X{\Cl S}$ be the natural generating mapping. We
associate with each continuous mapping $\varphi:X\to S$ into a Stone
topological algebra $S$ which is residually~\Cl S the unique
continuous homomorphism %
$\hat{\varphi}:\Om X{\Cl S}\to S$ such that
$\hat{\varphi}\circ\iota=\varphi$. By a \emph{Stone \Cl
  S-pseudoidentity over $X$} we mean a pair $(u,v)$ of elements %
of~$\Om X{\Cl S}$, usually written as a formal equality $u=v$. We say
that a residually~\Cl S Stone topological algebra $S$ \emph{satisfies}
$u=v$ if, for every continuous homomorphism $\varphi:X\to S$, the
equality $\hat{\varphi}(u)=\hat{\varphi}(v)$ holds. In case $\Cl
S=\St_\Omega$ consists of all Stone topological $\Omega$-algebras, we
refer simply to \emph{Stone pseudoidentities over~$X$}.

The following result is the suitable analog of Birkhoff's theorem for
Stone varieties. It comes at the cost of allowing proper classes of
Stone pseudoidentities for describing varieties for, unlike the
classical discrete setting, one cannot reduce to the case where
(pseudo)identities are written over finite sets of variables. More
precisely, we consider classes consisting of Stone \Cl
S-pseudoidentities over arbitrary Stonean spaces~$X$. For such a class
$\Sigma$, we denote $\op\Sigma\cl_{\Cl S}$ the class of all members
of~\Cl S that satisfy all members of~$\Sigma$ and we call $\Sigma$ a
\emph{basis} of the class~$\op\Sigma\cl_{\Cl S}$. In case $\Cl
S=\St_\Omega$, we drop the index \Cl S, writing simply $\op\Sigma\cl$.

\begin{Thm}
  \label{t:Birkhoff}
  A class \Cl V of Stone topological algebras is a Stone variety if
  and only if there is a class $\Sigma$ of Stone pseudoidentities over
  Stonean spaces such that $\Cl V=\op\Sigma\cl$.
\end{Thm}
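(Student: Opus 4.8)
The plan is to prove the two implications of the Birkhoff-type characterization separately, following the classical pattern (variety $\Leftrightarrow$ defined by (pseudo)identities) but using the duality machinery of Section~\ref{sec:Boole} in place of the usual term-algebra arguments.

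\emph{The easy direction: $\op\Sigma\cl$ is always a Stone variety.} Given a class $\Sigma$ of Stone pseudoidentities over Stonean spaces, I would check directly that $\op\Sigma\cl$ is closed under the three operations defining a Stone variety. Closure under arbitrary direct products and closed subalgebras is routine: if every $S_i$ satisfies $u=v$ (a pseudoidentity over a Stonean space $X$), then for any continuous $\varphi:X\to\prod S_i$ the components $\pi_i\circ\varphi$ witness that $\hat\varphi(u)=\hat\varphi(v)$ componentwise, hence equal; and a closed subalgebra inherits all satisfied pseudoidentities since continuous maps into the subalgebra are continuous maps into the ambient algebra. Closure under Stone continuous homomorphic images is the only point needing care: if $\pi:S\to T$ is an onto continuous homomorphism and $\varphi:X\to T$ is continuous with $X$ Stonean, then since $X$ is projective in the category of compact spaces (cited in the excerpt, via \cite{Gleason:1958}), $\varphi$ lifts to a continuous $\tilde\varphi:X\to S$ with $\pi\circ\tilde\varphi=\varphi$; applying $\pi$ to the equality $\hat{\tilde\varphi}(u)=\hat{\tilde\varphi}(v)$ in $S$ gives $\hat\varphi(u)=\hat\varphi(v)$ in $T$. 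This is precisely the place where Stonean (not merely Stone) spaces are needed, and it is the reason the theorem is phrased over Stonean domains.

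\emph{The hard direction: every Stone variety has a basis of Stone pseudoidentities over Stonean spaces.} Let \Cl V be a Stone variety. By Proposition~\ref{p:varieties}, \Cl V is a residually closed Stone pseudovariety, so by Corollary~\ref{c:residual-closure} each free algebra $\Om X{\Cl V}$ lies in \Cl V and every member of \Cl V is a continuous homomorphic image of some $\Om X{\Cl V}$. The natural candidate basis $\Sigma$ consists, for each Stonean space $X$, of all pairs $(u,v)\in\Om X{\St}_\Omega\times\Om X{\St}_\Omega$ such that the canonical homomorphism $\eta_X:\Om X{\St}_\Omega\to\Om X{\Cl V}$ identifies $u$ and $v$ (i.e.\ $\eta_X(u)=\eta_X(v)$). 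One inclusion $\Cl V\subseteq\op\Sigma\cl$ is immediate: any $S\in\Cl V$ and continuous $\varphi:X\to S$ factor the extension $\hat\varphi:\Om X{\St}_\Omega\to S$ through $\eta_X$ by the universal property of $\Om X{\Cl V}$, so $\eta_X(u)=\eta_X(v)$ forces $\hat\varphi(u)=\hat\varphi(v)$. For the reverse inclusion $\op\Sigma\cl\subseteq\Cl V$, take $S$ satisfying all of $\Sigma$; choose a Stonean space $X$ together with a continuous surjection onto (the underlying space of) $S$, which exists because every Stone space is a continuous image of a Stonean one, and arrange that this surjection is a generating map $\gamma:X\to S$. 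Then $\hat\gamma:\Om X{\St}_\Omega\to S$ is onto, and the hypothesis that $S$ satisfies every pseudoidentity in $\Sigma$ over this particular $X$ means exactly that $\ker\eta_X\subseteq\ker\hat\gamma$; hence $\hat\gamma$ factors through $\eta_X$, exhibiting $S$ as a continuous homomorphic image of $\Om X{\Cl V}\in\Cl V$. Since \Cl V is closed under Stone continuous homomorphic images and $S$ is a Stone space, $S\in\Cl V$.

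\emph{Anticipated obstacle.} The main technical point is the factorization step in the hard direction: one must ensure that $\hat\gamma:\Om X{\St}_\Omega\to S$ genuinely factors through $\eta_X:\Om X{\St}_\Omega\to\Om X{\Cl V}$ as a \emph{continuous} homomorphism, not merely a set map. This requires that $\ker\eta_X$, as a closed congruence, be contained in $\ker\hat\gamma$, and then that the induced map $\Om X{\Cl V}\to S$ be continuous --- which follows from $\Om X{\Cl V}$ being compact and $S$ Hausdorff once the set-theoretic factorization is in place. The subtlety is that satisfaction of $u=v$ by $S$ was defined via \emph{all} continuous homomorphisms $X\to S$, so to conclude $\eta_X(u)=\eta_X(v)\Rightarrow\hat\gamma(u)=\hat\gamma(v)$ for our \emph{specific} $\gamma$ we simply instantiate the definition of satisfaction at $\varphi=\gamma$; this is legitimate precisely because $\gamma$ is itself a continuous map $X\to S$. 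The other place demanding attention is verifying that $X$ can be chosen Stonean while $\gamma$ remains a generating map onto $S$: one picks any Stonean cover $p:X\twoheadrightarrow S$ of the underlying Stone space and observes that a surjective $p$ is automatically a generating map. With these points checked, the two inclusions combine to give $\Cl V=\op\Sigma\cl$.
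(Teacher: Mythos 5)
Your proposal is correct and follows essentially the same route as the paper: the easy direction uses projectivity of Stonean spaces to lift along Stone quotients, and the hard direction factors $\hat\gamma:\Om X{\St}_\Omega\to S$ through $\eta_X:\Om X{\St}_\Omega\to\Om X{\Cl V}$ after replacing a generating Stone space by a Stonean cover. The only cosmetic difference is that you take $\Sigma$ to be the kernels of the maps $\eta_X$ rather than the class of all pseudoidentities valid in~$\Cl V$, but these coincide (and the paper's own argument passes through exactly this identification).
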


\begin{proof}
  Suppose first that \Cl V is a Stone variety: let $\Sigma$ be the
  class consisting of all Stone pseudoidentities over Stonean spaces
  that are valid in~\Cl V. We claim that %
  $\op\Sigma\cl\subseteq\Cl V$, the reverse inclusion being obvious
  from the choice of~$\Sigma$. Let $S$ be an arbitrary element
  of~$\op\Sigma\cl$ and consider a generating mapping %
  $\varphi:X\to S$ with $X$ a Stone space, where we could simply take
  the identity mapping on~$S$. Since $X$ is a continuous image of some
  Stonean space, we may as well assume that $X$ is a Stonean space.
  Then, with the above choice of~$\iota$ and~$\hat{\varphi}$, we may
  also consider the natural continuous homomorphism $\eta$ in
  Diagram~(\ref{eq:Reiterman}) (cf.~end of
  Subsection~\ref{sec:free-Stone}).
  \begin{equation}
    \label{eq:Reiterman}
    \begin{gathered}
      \xymatrix{ X \ar[r]^(.4)\iota \ar[rd]_\varphi & \Om
        X{\St}_\Omega \ar[d]^{\hat{\varphi}} \ar[r]^\eta
        & \Om X{\Cl V} \ar@{-->}[ld]^\psi \\
        &S & }
    \end{gathered}
  \end{equation}
  Note that, if the pair of elements $u,v\in\Om X{\St}_\Omega$ is such
  that $\eta(u)=\eta(v)$, then from the universal property of~$\Om
  X{\Cl V}$ we deduce that the Stone pseudoidentity $u=v$ belongs
  to~$\Sigma$. By the assumption that $S$ belongs to $\op\Sigma\cl$,
  it follows that $\hat{\varphi}(u)=\hat{\varphi}(v)$ for all such
  pairs $u,v$. Hence, there is a unique homomorphism $\psi$ such that
  Diagram~\eqref{eq:Reiterman} commutes and continuity of $\psi$
  follows from the continuity of $\hat{\varphi}$ and $\eta$ and
  compactness of~$\Om X{\St}_\Omega$. Since $\varphi$ is a generating
  mapping, $\psi$ is onto. Since \Cl V is a Stone variety and $\Om
  X{\Cl V}$ belongs to~\Cl V, we deduce that $S\in\Cl V$. This
  establishes the equality %
  $\Cl V=\op\Sigma\cl$ and proves half of the theorem.

  The proof of the converse consists in showing that %
  $\Cl V=\op\Sigma\cl$ is a Stone variety for every class $\Sigma$ of
  Stone pseudoidentities over Stonean spaces. Thus, we should show
  that \Cl V is closed under taking quotients that are Stone spaces,
  closed subalgebras, and arbitrary direct products. Except for the
  case of quotients, the verification is standard and amounts to a
  straightforward argument that is omitted. The exception is what
  leads us to consider Stone pseudoidentities over Stonean spaces
  rather than over arbitrary topological spaces.

  So, let $\alpha:S\to T$ be an arbitrary onto continuous homomorphism
  between Stone topological algebras and assume that $S\in\Cl V$. Let
  $u=v$ be a member of~$\Sigma$, say $u,v\in\Om X{\St}_\Omega$ for a
  Stonean space~$X$. Let $\varphi:X\to T$ be a continuous mapping.
  Since $X$ is projective in the category of Stone spaces (cf.\ above
  discussion), there is a continuous mapping $\psi:X\to S$ such that
  $\alpha\circ\psi=\varphi$. Consider the induced continuous
  homomorphisms $\hat{\varphi}$ and $\hat{\psi}$ such that the
  following diagram commutes:
  \begin{displaymath}
    \xymatrix@R=5mm@C=8mm{
      &&& S \ar[dd]^\alpha \\
      X \ar@/^5mm/[rrru]^\psi \ar[rr]^(.35)\iota \ar@/_5mm/[rrrd]_\varphi
      && **[l]{\Om X{\St}_\Omega} \ar[ru]^{\hat{\psi}} \ar[rd]_{\hat{\varphi}}
      & \\
      &&& **[r]{T.}
    }
  \end{displaymath}
  Since $S$ belongs to~\Cl V, $S$ satisfies $u=v$ and so the equality
  $\hat{\psi}(u)=\hat{\psi}(v)$ holds. From the commutativity of the
  diagram, it follows that $\hat{\varphi}(u)=\hat{\varphi}(v)$, which
  shows that $T$ also satisfies $u=v$. This establishes that $T\in\Cl
  V$ and completes the proof of the theorem.
\end{proof}

In particular, the Stone variety $\reallywidehat{\pv{Fin}_\Omega}$ of
all profinite $\Omega$-algebras is defined by some class of Stone
pseudoidentities over Stonean spaces. Other than the basis provided by
the proof of Theorem~\ref{t:Birkhoff}, we know of no simple basis
for~$\reallywidehat{\pv{Fin}_\Omega}$. Yet, for some pseudovarieties
\pv V of finite algebras, usual identities (between terms) are
sufficient to define the Stone variety $\widehat{\pv V}$ of all
pro-\pv V algebras. Semigroups, groups, rings distributive lattices
and lattices \cite{Numakura:1957,Clark&Davey&Freese&Jackson:2004} are
examples where this phenomenon occurs.

We may derive Reiterman's theorem from our previous results. Although
the proof is not shorter than a direct proof, it shows how Reiterman's
theorem can be viewed as a special case of Theorem~\ref{t:Birkhoff}.

\begin{Thm}
  \label{t:Reiterman}
  A class of finite topological algebras is a Stone pseudovariety if
  and only if it is of the form $\op\Sigma\cl_{\pv{Fin}_\Omega}$ for
  some set $\Sigma$ of Stone $\pv{Fin}_\Omega$-pseudoidentities over
  finite discrete spaces.
\end{Thm}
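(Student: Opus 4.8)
The plan is to prove the equivalence directly, reusing the ingredients behind Theorem~\ref{t:Birkhoff} but exploiting that a finite algebra is finitely generated and only ``sees'' finitely many variables, so that test spaces may be taken finite and discrete and the basis may be taken to be a set. The easy implication is that, for any set $\Sigma$ of Stone $\pv{Fin}_\Omega$-pseudoidentities over finite discrete spaces, the class $\op\Sigma\cl_{\pv{Fin}_\Omega}$ is a Stone pseudovariety. Closure under closed subalgebras and finite direct products is the routine verification that is omitted in the proof of Theorem~\ref{t:Birkhoff}: for a continuous mapping $\varphi:X\to S$, the induced homomorphism $\hat\varphi$ composes correctly with inclusions of subalgebras and with product projections. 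Closure under Stone quotients is here equally routine and needs no passage to Stonean spaces, precisely because the test spaces $X$ are finite and discrete: if $\alpha:S\to T$ is an onto continuous homomorphism, every mapping $X\to T$ lifts along $\alpha$, so validity of a pseudoidentity in $S$ forces it in $T$. Since trivial algebras validate every pseudoidentity, the class is nonempty.

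For the main implication, let $\pv V$ be a Stone pseudovariety of finite $\Omega$-algebras, and note that $\pv V\subseteq\pv{Fin}_\Omega$. For each $n\ge 0$ fix an $n$-element discrete space $X_n$. Since $\pv V\subseteq\pv{Fin}_\Omega$, the remark following Proposition~\ref{p:free-Stone2} provides a natural onto continuous homomorphism $\eta_n:\Om{X_n}{Fin}_\Omega\to\Om{X_n}V$ fixing the generators. I would then set
\[
  \Sigma=\bigcup_{n\ge 0}\ker\eta_n,
\]
regarded as a set of Stone $\pv{Fin}_\Omega$-pseudoidentities over the finite discrete spaces $X_n$. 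This is genuinely a set: there are only countably many spaces $X_n$ and each $\Om{X_n}{Fin}_\Omega$ is a set, so each $\ker\eta_n$ is contained in the set $\Om{X_n}{Fin}_\Omega\times\Om{X_n}{Fin}_\Omega$. The claim to prove is $\pv V=\op\Sigma\cl_{\pv{Fin}_\Omega}$.

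For $\pv V\subseteq\op\Sigma\cl_{\pv{Fin}_\Omega}$, take $S\in\pv V$ and $(u,v)\in\ker\eta_n$, and let $\hat\varphi:\Om{X_n}{Fin}_\Omega\to S$ be the continuous homomorphic extension of a continuous mapping $\varphi:X_n\to S$. By the universal property of $\Om{X_n}V$, the mapping $\varphi$ also extends to a continuous homomorphism $\bar\varphi:\Om{X_n}V\to S$, and then $\bar\varphi\circ\eta_n$ extends $\varphi$ on $X_n$, hence equals $\hat\varphi$; therefore $\hat\varphi(u)=\bar\varphi(\eta_n(u))=\bar\varphi(\eta_n(v))=\hat\varphi(v)$, so $S$ satisfies $\Sigma$. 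For the reverse inclusion, let $S$ be a finite algebra validating $\Sigma$, put $n=|S|$, and choose a bijection $X_n\to S$; it is a continuous generating mapping and so extends to an onto continuous homomorphism $\psi:\Om{X_n}{Fin}_\Omega\to S$. Applying the satisfaction of $\ker\eta_n\subseteq\Sigma$ by $S$ to this very mapping gives $\ker\eta_n\subseteq\ker\psi$, so $\psi$ factors as $\beta\circ\eta_n$ for an algebra homomorphism $\beta:\Om{X_n}V\to S$. Since $\eta_n$ is onto and $S$ is finite and discrete, $\beta^{-1}(s)=\eta_n\bigl(\psi^{-1}(s)\bigr)$ is the continuous image of a compact set, hence closed, so $\beta$ is continuous; it is also onto. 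Thus $S$ is a finite continuous homomorphic image of the residually $\pv V$ algebra $\Om{X_n}V$, whence $S\in\pv V$ by Proposition~\ref{p:finite-quotient-residually-S}, completing the argument.

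The step requiring the most care---and the only place where this statement is genuinely sharper than Theorem~\ref{t:Birkhoff}---is the reduction to a \emph{set} of pseudoidentities over \emph{finite discrete} test spaces; both refinements rest on the fact that finite algebras are finitely generated and that each $\ker\eta_n$ is bounded by a set, everything else being bookkeeping with universal properties. One could instead route the argument through Theorem~\ref{t:Birkhoff} applied to the Stone variety $\widehat{\pv V}$, using $\pv V=\widehat{\pv V}\cap\pv{Fin}_\Omega$, but one would then still have to perform exactly these two reductions, so the direct argument above seems preferable.
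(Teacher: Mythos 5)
Your proof is correct, but it takes a genuinely different route from the paper's. The paper deliberately derives the statement from Theorem~\ref{t:Birkhoff}: it first observes that a residually~$\Cl S$ algebra satisfies a $\Cl S$-pseudoidentity $u=v$ if and only if it satisfies any lift $u'=v'$ to $\Om X{\St}_\Omega$, so that $\op\Sigma\cl_{\pv{Fin}_\Omega}$ is an intersection of two Stone pseudovarieties; for the converse it takes a basis of $\widehat{\pv V}$ over Stonean spaces supplied by Theorem~\ref{t:Birkhoff}, pushes it down along $\eta$, invokes Theorem~\ref{t:profinite} to get $\widehat{\pv V}\cap\pv{Fin}_\Omega=\pv V$, and only then reduces to finite discrete spaces by a ``routine argument'' restricting along continuous homomorphisms into the $\Om Y{Fin}_\Omega$ with $Y$ finite. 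You instead give the classical Reiterman-style direct argument: the easy direction is handled by the observation that finite discrete test spaces always lift along surjections (so no projectivity of Stonean spaces is needed), and for the hard direction you exhibit the explicit basis $\Sigma=\bigcup_n\ker\eta_n$, verifying $\pv V\subseteq\op\Sigma\cl_{\pv{Fin}_\Omega}$ by the universal property of $\Om{X_n}V$ and the reverse inclusion by factoring $\psi$ through $\eta_n$, checking continuity of the induced map via compactness, and concluding with Proposition~\ref{p:finite-quotient-residually-S}. All the steps check out, including the set-theoretic bookkeeping and the continuity of $\beta$. What the paper's route buys is the conceptual point it explicitly advertises --- that Reiterman's theorem is a special case of the Birkhoff-type Theorem~\ref{t:Birkhoff} --- at the cost of leaving the final reduction to finite discrete spaces as a sketch; what your route buys is self-containedness (no Stonean spaces, no Gleason projectivity), a concrete canonical basis, and a fully detailed reduction, which is essentially the ``direct proof'' the paper itself acknowledges exists and declines to give.
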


\begin{proof}
  Let \Cl S be a Stone pseudovariety and $X$ a Stonean space. For
  $u,v\in\Om X{\Cl S}$, we may choose $u',v'\in\Om X{\St}_\Omega$ such
  that $\eta(u')=u$ and $\eta(v')=v$, where $\eta:\Om
  X{\St}_\Omega\to\Om X{\Cl S}$ is the natural continuous
  homomorphism. We claim that, for an arbitrary Stone topological
  algebra $S$ which is residually~\Cl S, $S$~satisfies $u=v$ if and
  only if $S$~satisfies $u'=v'$. Indeed, if $S$ satisfies $u=v$ and
  $\varphi:\Om X{\St}_\Omega\to S$ is a continuous homomorphism, then
  there is a continuous homomorphism $\psi:\Om X{\Cl S}\to S$
  such that $\psi\circ\eta=\varphi$. By assumption,
  $\varphi(u)=\varphi(v)$ and so we get that $\psi(u')=\psi(v')$. The
  converse is even simpler and is left to the reader. Applying the
  claim to the Stone pseudovariety $\Cl S=\pv{Fin}_\Omega$, it follows
  that, for $\Sigma$ as in the statement of the theorem, %
  \begin{displaymath}
    \op\Sigma\cl_{\pv{Fin}_\Omega}
    =\op u'=v':(u=v)\in\Sigma\cl \cap \pv{Fin}_\Omega
  \end{displaymath}
  is the intersection of two
  Stone pseudovarieties by Theorem~\ref{t:Birkhoff}, whence it is also
  a Stone pseudovariety.

  For the converse, let \pv V be a pseudovariety of finite topological
  algebras. By Theorem~\ref{t:Birkhoff}, there is a set $\Sigma$ of
  $\St_\Omega$-pseudoidentities over Stonean spaces such that
  $\widehat{\pv V}=\op\Sigma\cl$. For each member $u=v$ of~$\Sigma$,
  say with $u,v\in\Om X{\St}_\Omega$, we may consider the
  $\widehat{\pv V}$-pseudoidentity $\eta(u)=\eta(v)$, where $\eta:\Om
  X{\St}_\Omega\to\Om X{\widehat{\pv V}}$ is the natural continuous
  homomorphism. Let $\Sigma'$ be the set of all such $\widehat{\pv
    V}$-pseudoidentities. By the considerations at the beginning of
  the proof and Theorem~\ref{t:profinite}, we obtain the equalities
  \begin{displaymath}
    \op\Sigma'\cl_{\pv{Fin}_\Omega}
    =\op\Sigma\cl \cap \pv{Fin}_\Omega
    =\widehat{\pv V} \cap \pv{Fin}_\Omega
    =\pv V.
  \end{displaymath}
  It remains to show that we only need to take
  $\pv{Fin}_\Omega$-pseudoidentities over finite discrete spaces.
  Given a $\pv{Fin}_\Omega$-pseudoidentity $u=v$, we consider all
  pseudoidentities of the form $\gamma(u)=\gamma(v)$ where $\gamma:\Om
  X{Fin}_\Omega\to\Om Y{Fin}_\Omega$ is an arbitrary continuous
  homomorphism and $Y$ is an arbitrary finite subspace of a fixed
  countable discrete space~$Z$. A routine argument shows that a
  profinite algebra satisfies $u=v$ if and only if it satisfies all
  such pseudoidentities over finite discrete spaces. Thus, if we
  consider all pseudoidentities over finite subspaces of~$Z$ so
  associated with pseudoidentities from~$\Sigma'$, we obtain a set of
  pseudoidentities $\tilde{\Sigma}$ such that
  $\op\tilde{\Sigma}\cl_{\pv{Fin}_\Omega}=\pv V$.
\end{proof}

We finish with an example showing that Stone varieties are not
characterized by their profinite members.

\begin{eg}
  \label{eg:Jonsson}
  Let $\Omega=\Omega_1\cup\Omega_2$ be the signature given by
  $\Omega_1=\{\alpha,\beta\}$ and $\Omega_2=\{\gamma\}$. Consider the
  Stone variety \Cl V defined by the J\'onsson-Tarski identities
  \cite{Jonsson&Tarski:1961}:
  \begin{equation}
    \label{eq:J-T-identities}
    \alpha\bigl(\gamma(x,y)\bigr)=x,\
    \beta\bigl(\gamma(x,y)\bigr)=y,\
    \gamma\bigl(\alpha(x),\beta(x)\bigr)=x.
  \end{equation}
  Note that an $\Omega$-algebra $S$ satisfies the
  identities~\eqref{eq:J-T-identities} if and only if the mappings
  $\gamma_S:S\times S\to S$ and $(\alpha_S,\beta_S):S\to S\times S$
  are mutual inverses. In particular, the only finite algebra in~\Cl V
  is the trivial one. Since \Cl V is a Stone variety, it follows that
  it does not contain any nontrivial profinite algebra. Note also that
  \Cl V is nontrivial as the Cantor set $C$ admits a homeomorphism
  $C\times C\to C$ and so it has a structure that makes it a member
  of~\Cl V.
\end{eg}

\bibliographystyle{amsplain}
\bibliography{sgpabb,ref-sgps}

\end{document}